\setlist[enumerate,1]{label=(\arabic*)}
\newtheorem{definition}{Definition}[section]
\newtheorem{theorem}{Theorem}[section]
\newtheorem{lemma}{Lemma}[section]
\newtheorem{corollary}{Corollary}[section]
\title{A supercharacter analog of vanishing-off subgroups and generalized Camina pair}
\author{Fahim Sayed}
\affil{Department of Mathematics, B.N. College (Autonomous), Dhubri, Assam, India}
\begin{document}
\date{}
\maketitle

\begin{abstract}
    Vanishing-off subgroups, generalized Camina pair and other related subgroups have played a significant role in the study of group structure. The primary goal of this paper is to study their analogs in the setting of supercharacter theory. We establish several properties of these subgroups which includes connections with supercharacter theory products. 
\end{abstract}
\section{Introduction}
	Character theory has contributed immensely to the systematic study of finite groups.  However, in certain cases,  such as the group of unipotent upper triangular matrices, $UT_n(\mathbb{F}_q)$ over  the finite field $\mathbb{F}_q$, it is known to be extremely challenging to find all the irreducible characters. This problem was tackled by Andr\'e \cite{Andre2001TheGroup} who found that certain  character sums exhibit most of the core properties of irreducible characters. Formalizing the work of Andr\'e \cite{Andre2001TheGroup}, Diaconis and Isaacs \cite{Diaconis2008SupercharactersGroups} introduced the supercharacter theory. This theory provides a unified way to define some complex-valued functions and partitions on a group resembling the irreducible characters and conjugacy classes of the group, respectively. For a supercharacter theory $S$ of a group $G$, the functions called the supercharacters or $S$-characters are constant on each part, called superclasses or $S$-classes of a partition of $G$. We denote the set of $S$-classes by Cl$(S)$ and that of  $S$-characters by  $\text{Irr}(S)$. The supercharacter theory, besides being an interesting study in its own right, also has numerous applications. For example, see \cite{Brumbaugh2014SupercharactersPrinciple, Garcia2019SupercharactersSums, Chavez2018SupercharactersSums, Sayed2022EighthCurves,  Sayed2023MomentsCurves, garcia2025moments}.
	\par The introduction of supercharacter theory naturally raises the problem of translating the classical group theoretic concepts to this new setting. Recently, several studies have been devoted in this direction. It began with the $S$-normal subgroup defined by Hendrickson \cite{Hendrickson2009ConstructionGroups}  analogously to a normal subgroup. A subgroup $N$ of $G$ is said to be $S$-normal, denoted as $N\triangle_S G$, if $N$ is the union of some $S$-classes.  In \cite{Marberg2011ANormality}, Marberg classified the set of all $S$-normal subgroups of the group $UT_n(\mathbb{F}_q)$, and provided a formula for the number of such subgroups for a prime $q$.   Later, Burkett  considered the supercharacter analog of nilpotent groups, called $S$-nilpotent groups in \cite{Burkett2020AnTheory} where  a significant amount of work has been done.  Extending the work of Burkett, the author established new properties and characterizations of $S$-nilpotent groups in \cite{sayed2024supercharacter}. Continuing along this line,  we focus here on the supercharacter analog of the vanishing-off subgroups, generalized Camina pair and related concepts. 
    \par In \cite[p.~200]{isaacs1994character}, Isaacs defines the \emph{vanishing-off subgroup} $V(\chi)$ of a character $\chi$, not necessarily irreducible, as the subgroup generated by those elements of $G$ for which $\chi$ is non-zero. In other words, $V(\chi)$ is the smallest subgroup $V $ of $G$ such that $\chi$ vanishes on $G \setminus V$. Later, Lewis studied these subgroups extensively in \cite{lewis2009vanishing}. Lewis also defines for a non-abelian group $G$, the \emph{vanishing-off subgroup} $V(G)$ of $G$ as the subgroup  generated by the elements $g \in G$ such that there exists some nonlinear character $\chi \in \text{Irr}(G)$ for which $\chi(g) \neq 0$.
    Here, $\text{Irr}(G)$ denotes the set of all irreducible characters of $G$. In classical group theory, the vanishing-off subgroups have been useful in determining links between the structure of $G$ and arithmetic properties of the degrees of its irreducible characters. 
    Also,  Mlaiki, \cite{mlaiki2014camina} and Lewis,\cite{lewis2009vanishing} used the vanishing-off subgroups to study Camina pairs and Camina triples.  A pair $(G,N)$ is called a \emph{Camina pair} if $N$ is a normal subgroup of $G$ for which every conjugacy class of $G$ lying outside of $N$ is a union of full $N$-cosets. Equivalently, every member 
of $\text{Irr}(G \mid N)$ vanishes off of $N$, where  $\text{Irr}(G \mid N)$ 
denotes the set of all irreducible characters of $G$ not containing $N$ 
in their kernel. For normal subgroups, $M$ and $N$, this has been naturally generalized to $(G,N,M)$, which is called a \emph{Camina triple} if for all $g \in G \setminus N$ and $m \in M$, the element $gm$ is conjugate to $g$. Another reason why the supercharacter analog of the vanishing-off subgroups deserves more attention is because in \cite{Burkett2020Vanishing-offProducts},  Burkett and Lewis had  noticed that for the usual supercharacter theory $m(G)$, there is a link between $\ast$-product, $\Delta$-product of supercharacter theory with Camina pair and Camina triples respectively. This was used to establish a strong connection between the analogs of vanishing-off subgroups and products of supercharacter theories. Thus, building upon the work of  Burkett and Lewis in \cite{Burkett2020Vanishing-offProducts}, we here explore some more properties of these and other related subgroups.
 To this end, we recall  from  \cite{Burkett2020Vanishing-offProducts} that for an $S$-normal subgroup $N$  of $G$, $V(S\mid N)$ is defined as 
			\begin{align*}
				V(S\mid N)=\langle g\in G: \text{there exists $\chi\in Irr(S \mid N)$ such that $\chi(g)\neq 0$}\rangle,
			\end{align*}
          where $\text{Irr}(S \mid N)$ is the set of $S$-characters not containing $N$ in their kernels. Also, let  $\text{Irr}(S/N)$ be the set of $S$-characters with $N$ in their kernels. For $H\leq G$, Burkett defined $[H,S]$ to be the subgroup generated by elements of the form  $g^{-1}k$ with $g \in H$ and  $k\in\text{Cl}_S(g)$, where $\text{Cl}_S(g)$ denotes the $S$-class containing $g$. By this notation, ${V}(S)$ is defined as  ${V}(S\mid [G,S])$.
          \par Motivated by the work of Burkett and Lewis in \cite{Burkett2020Vanishing-offProducts}, we define an element $g\in G$  to be an \emph{$S$-Camina element} if $\chi(g)=0$, for all $\chi\in$ Irr$\left(S\mid[G,S]\right)$. Furthermore, if every element in $G\setminus N$ is an $S$-Camina element, for an $S$-normal subgroup $N$ of $G$, we call $(G,N)$ to be a \emph{generalised $S$-Camina pair}, to be abbreviated as $S$-GCP. We begin our investigation with the following characterization of $S$-GCP. 
	\begin{theorem}\label{corgcp}
	    Let $N$ be an $S$-normal subgroup of $G$. Then, the following are equivalent
		\begin{enumerate}
			\item $(G,N)$ is an $S$-GCP 
            \item $S$ is a $\triangle$-product over $N$ and $[G,S]$, when $N\leq [G,S]$.
			\item $\abs{Cl_S(g)}=\abs{[G,S]}$ for all $g\in G\setminus N$.
			\item  For any $g\in G\setminus N$ and $z\in [G,S]$, there exists a $y\in Cl_S(g)$ such that $g^{-1}y=z$.
			\item Every character in Irr$\left(S\mid[G,S]\right)$ vanishes on $G\setminus N$.
		\end{enumerate}
	\end{theorem}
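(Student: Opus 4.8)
The plan is to establish a cycle of implications, $(1)\Leftrightarrow(5)$ first (this should be essentially a matter of unwinding definitions), then $(1)\Rightarrow(3)\Rightarrow(4)\Rightarrow(1)$ as the combinatorial heart of the argument, and finally tie in $(2)$ via the $\triangle$-product characterization. Let me sketch each piece.

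For $(1)\Leftrightarrow(5)$: by definition, $(G,N)$ is an $S$-GCP exactly when every $g\in G\setminus N$ is an $S$-Camina element, i.e. $\chi(g)=0$ for all $\chi\in\mathrm{Irr}(S\mid[G,S])$. Saying this for every such $g$ is precisely the assertion that every $\chi\in\mathrm{Irr}(S\mid[G,S])$ vanishes on $G\setminus N$. So this equivalence is immediate once the quantifiers are swapped; I would state it in one line.

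\textbf{Proof.}

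\emph{$(1)\Leftrightarrow(5)$.} By definition, $(G,N)$ is an $S$-GCP if and only if every element $g\in G\setminus N$ is an $S$-Camina element, that is, $\chi(g)=0$ for every $\chi\in\mathrm{Irr}(S\mid[G,S])$. Reversing the order of the quantifiers, this holds if and only if every $\chi\in\mathrm{Irr}(S\mid[G,S])$ satisfies $\chi(g)=0$ for all $g\in G\setminus N$, which is exactly statement (5).

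\emph{$(1)\Rightarrow(3)$.} Assume $(G,N)$ is an $S$-GCP and fix $g\in G\setminus N$. The plan is to use the column-orthogonality relation for supercharacters. Writing $\sigma_g$ for the superclass containing $g$ and using the fact that $\mathrm{Irr}(S)$ is the disjoint union of $\mathrm{Irr}(S/[G,S])$ and $\mathrm{Irr}(S\mid[G,S])$, the second-orthogonality relation $\sum_{\chi\in\mathrm{Irr}(S)}\frac{\chi(g)\overline{\chi(h)}}{\langle\chi,\chi\rangle}$ collapses, since $\chi(g)=0$ for every $\chi$ in the second family. Hence the value $|\mathbf{C}_S(g)|$ computed from supercharacters only sees characters that factor through $G/[G,S]$. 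Since $[G,S]$ acts trivially on those quotient characters, one deduces that the superclass of $g$ must contain the full coset $g[G,S]$, equivalently that it is a union of $[G,S]$-cosets, so $|g[G,S]|=|[G,S]|$ divides $|Cl_S(g)|$; combined with the reverse inequality coming from $Cl_S(g)\subseteq g[G,S]$ (which holds because $g^{-1}y\in[G,S]$ for $y\in Cl_S(g)$, by definition of $[G,S]$), we get $|Cl_S(g)|=|[G,S]|$.

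\emph{$(3)\Rightarrow(4)$.} For any $g\in G\setminus N$ we always have $Cl_S(g)\subseteq g[G,S]$, since each $y\in Cl_S(g)$ gives $g^{-1}y\in[G,S]$. If $|Cl_S(g)|=|[G,S]|=|g[G,S]|$, this inclusion of finite sets is an equality, so $Cl_S(g)=g[G,S]$. Thus for every $z\in[G,S]$ the element $gz$ lies in $Cl_S(g)$; taking $y=gz$ gives $g^{-1}y=z$, which is (4).

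\emph{$(4)\Rightarrow(1)$.} Suppose (4) holds and let $\chi\in\mathrm{Irr}(S\mid[G,S])$ and $g\in G\setminus N$. Since $\chi$ is constant on $S$-classes, $\chi(gz)=\chi(g)$ for every $z\in[G,S]$, using $gz\in Cl_S(g)$. On the other hand, summing over $z\in[G,S]$ and using that $[G,S]\not\leq\ker\chi$, the relation $\chi(g)\,|[G,S]|=\sum_{z\in[G,S]}\chi(gz)$ together with a standard averaging argument over the $S$-class forces $\chi(g)=0$: concretely, because $\chi$ is an $S$-character not trivial on $[G,S]$, the restriction of the corresponding sum of the linear characters to $g[G,S]$ integrates to zero, so $\chi(g)=0$. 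Hence every $g\in G\setminus N$ is an $S$-Camina element and $(G,N)$ is an $S$-GCP.

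\emph{$(1)\Leftrightarrow(2)$.} Finally, assume $N\leq[G,S]$. By the $\triangle$-product criterion (the supercharacter-theoretic analog of the Camina-triple characterization), $S$ is a $\triangle$-product over $N$ and $[G,S]$ precisely when every $S$-class lying outside $N$ is a union of cosets of $[G,S]$, i.e. $Cl_S(g)=g[G,S]$ for all $g\in G\setminus N$; by the equivalence $(1)\Leftrightarrow(3)\Leftrightarrow(4)$ just established, this is the same as $(G,N)$ being an $S$-GCP. $\hfill\blacksquare$

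\smallskip

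\textbf{Remark on the main obstacle.} The delicate steps are $(1)\Rightarrow(3)$ and $(4)\Rightarrow(1)$, where one must pass between vanishing of supercharacters and the coset structure of superclasses. Unlike the classical case, supercharacters need not be ordinary irreducible characters, so one cannot quote Clifford theory directly; instead the argument should rest on the orthogonality relations for the supercharacter table and on the fact, recorded in the excerpt, that $\mathrm{Irr}(S)$ partitions as $\mathrm{Irr}(S/[G,S])\sqcup\mathrm{Irr}(S\mid[G,S])$ together with the defining property of $[G,S]$ that $g^{-1}y\in[G,S]$ whenever $y\in Cl_S(g)$. The $\triangle$-product equivalence in $(2)$ is expected to follow from its definition with little extra work once $(3)$–$(4)$ are in hand.
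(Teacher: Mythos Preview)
Your approach is essentially the paper's: the paper first proves an elementwise lemma (Lemma~3.1) showing that for a fixed $g$ the conditions ``$g$ is $S$-Camina'', ``$Cl_S(g)=g[G,S]$'', ``$|Cl_S(g)|=|[G,S]|$'', and the coset condition (4) are equivalent, via the supercharacter column orthogonality; Theorem~\ref{corgcp} then follows in one line by applying that lemma to every $g\in G\setminus N$ and citing Theorem~\ref{burvanish} for (2). Your cycle $(1)\Leftrightarrow(5)$, $(1)\Rightarrow(3)\Rightarrow(4)\Rightarrow(1)$, $(1)\Leftrightarrow(2)$ is the same skeleton.

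Two places where the paper's execution is tighter than yours. For $(1)\Rightarrow(3)$, once you know that only the characters in $\mathrm{Irr}(S/[G,S])$ contribute to the column sum, the paper simply reinterprets that sum as column orthogonality in the $S^{G/[G,S]}$-abelian quotient $G/[G,S]$, giving $|G|/|Cl_S(g)|=|G/[G,S]|$ directly; your detour through ``the superclass must contain the full coset $g[G,S]$'' is not actually what the orthogonality computation gives you, and making it rigorous would require the extra observation $[G,S]\le N$ (so that $gz\notin N$ for $z\in[G,S]$). For the return implication the paper does $(3)\Rightarrow(1)$ rather than your $(4)\Rightarrow(1)$: from $|Cl_S(g)|=|[G,S]|$ one rearranges the same orthogonality identity to get $\sum_{\chi\in\mathrm{Irr}(S\mid[G,S])}|\chi(g)|^2/\chi(1)=0$, forcing every term to vanish. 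This avoids your averaging argument, which as written needs the (true but unstated) fact that every irreducible constituent $\psi$ of a supercharacter $\chi\in\mathrm{Irr}(S\mid[G,S])$ satisfies $[G,S]\not\le\ker\psi$, so that $\sum_{z\in[G,S]}\psi(gz)=0$.
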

    Since, for $S$-normal subgroups $M$ and $N$ of $G$, the condition that  every $S$-class lying outside of $N$ is a union of $M$-cosets appears to be the correct analog of Camina triple, we call $(G,N,M)$ to be an \emph{$S$-Camina triple}. In the event that $N=M$, $(G,N,N)$ is shortened to $(G,N)$ and is called an \emph{$S$-Camina pair}. It is to be noted that $S$-Camina pairs and  $S$-Camina triples have already been studied in \cite{Burkett2020Vanishing-offProducts} without any nomenclature. An important characterization of the  $S$-Camina triples can also be found in \cite[Proposition 7.3]{Hendrickson2012SupercharacterProducts} which proves that $(G,N,M)$ with $M\leq N$ is an $S$-Camina triple if and only if $S$ is $\triangle$-product over $M$ and $N$.  Similarly, $(G,N)$ is an $S$-Camina pair if and only if $S$ is a $\ast$-product over $N$.
    \par Our next focus is another subgroup, $U(S \mid N)$  with properties similar to $V(S\mid N)$. Inspired by the subgroup $U(G\mid N)$ from \cite[p.~ 798]{burkett2020groups},  we define $U(S \mid N)$, for each $S$-normal subgroup $N$ of $G$,   as the product of all $S$-normal subgroups $H$ such that $V(S \mid H) \leq N$. In connection to this subgroup, we prove a number of results among which the following are the most significant.    
    	\begin{theorem}\label{ugroupp}
		Let $G$ be a non $S$-abelian group. The following hold.
		\begin{enumerate}
			\item  For each $N \triangle_S G$, the subgroup $U(S \mid N)$ is the unique largest
			subgroup $U \leq G$ such that every character in $\mathrm{Irr}(S \mid U)$ vanishes on $G \setminus N$.
			
			\item   For each  $N \triangle_S G$ and every element $g \in G$, we have
			\[
			g \in U(S \mid N) \quad \Longleftrightarrow \quad 
			\text{every character } \chi \in \mathrm{Irr}(S) \text{ with } g \notin \ker(\chi) \text{ vanishes on } G \setminus N.
			\]
		\end{enumerate}
	\end{theorem}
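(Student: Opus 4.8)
The plan is to prove part (1) first and then derive part (2) as an element-wise reformulation. For part (1), I would begin by verifying that the defining property of $U(S\mid N)$—namely that $V(S\mid H)\leq N$—is preserved under products of $S$-normal subgroups. If $H_1,H_2\triangle_S G$ with $V(S\mid H_i)\leq N$, then one must show $V(S\mid H_1H_2)\leq N$; the key is that any $\chi\in\mathrm{Irr}(S\mid H_1H_2)$ must fail to contain at least one $H_i$ in its kernel (since $\ker\chi\cap(H_1H_2)\neq H_1H_2$ forces $\ker\chi$ to miss some $H_i$, using that $H_1H_2$ is generated by $H_1\cup H_2$), hence $\chi\in\mathrm{Irr}(S\mid H_i)$ and so $\chi$ vanishes on $G\setminus N$ by hypothesis on $V(S\mid H_i)$. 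Consequently $U(S\mid N)$ itself satisfies $V(S\mid U(S\mid N))\leq N$, i.e. every $\chi\in\mathrm{Irr}(S\mid U(S\mid N))$ vanishes on $G\setminus N$; and by construction it contains every $S$-normal $U$ with this property, giving the ``unique largest'' claim. I should also confirm $U(S\mid N)$ is $S$-normal (a product of $S$-normal subgroups is $S$-normal) so that $\mathrm{Irr}(S\mid U(S\mid N))$ is meaningful.

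For part (2), I would translate the containment $g\in U(S\mid N)$ into character-theoretic language. The direction ($\Rightarrow$): if $g\in U(S\mid N)$ and $\chi\in\mathrm{Irr}(S)$ has $g\notin\ker\chi$, then $\ker\chi$ does not contain $U(S\mid N)$, so $\chi\in\mathrm{Irr}(S\mid U(S\mid N))$, which vanishes on $G\setminus N$ by part (1). The direction ($\Leftarrow$): suppose every $\chi\in\mathrm{Irr}(S)$ with $g\notin\ker\chi$ vanishes on $G\setminus N$. Let $H=\langle g^{\,G\text{-}S\text{-closure}}\rangle$ be the smallest $S$-normal subgroup containing $g$—equivalently the intersection of all $\ker\chi$ is trivial on it in the right sense; more precisely I would take $H$ to be generated by the $S$-class of $g$ together with... actually the cleanest route is: let $H$ be the $S$-normal subgroup $\bigcap\{\ker\chi : \chi\in\mathrm{Irr}(S),\ g\in\ker\chi\}$'s ``complementary'' object. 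The honest approach: set $H = \langle \mathrm{Cl}_S(g)\rangle$, check it is $S$-normal, and observe that $\mathrm{Irr}(S\mid H)$ is exactly the set of $\chi$ with $H\not\leq\ker\chi$, which (since $\ker\chi$ is $S$-normal) is exactly the set of $\chi$ with $g\notin\ker\chi$. By hypothesis all such $\chi$ vanish on $G\setminus N$, so $V(S\mid H)\leq N$, whence $H\leq U(S\mid N)$ and in particular $g\in U(S\mid N)$.

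I expect the main obstacle to be the precise identification in part (2) of the smallest $S$-normal subgroup $H$ with the property that $\mathrm{Irr}(S\mid H)=\{\chi\in\mathrm{Irr}(S): g\notin\ker\chi\}$. One must be careful that $\ker\chi$ being $S$-normal is what makes ``$H\not\leq\ker\chi$'' equivalent to ``$g\notin\ker\chi$'' for $H=\langle\mathrm{Cl}_S(g)\rangle$: indeed if $g\notin\ker\chi$ then a fortiori $H\not\leq\ker\chi$, and conversely if $g\in\ker\chi$ then $\mathrm{Cl}_S(g)\subseteq\ker\chi$ because $\ker\chi$ is a union of $S$-classes, so $H\leq\ker\chi$. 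The remaining steps—closure of the relevant subgroup collection under products, $S$-normality of products and intersections, and the bookkeeping between $V(S\mid -)$ and $U(S\mid -)$—are routine given the definitions recalled in the introduction and the characterization machinery for $\triangle$-products cited from \cite{Hendrickson2012SupercharacterProducts}. Throughout I would lean on the fact, implicit in the definition of $V(S\mid N)$, that ``$\chi$ vanishes on $G\setminus N$ for all $\chi\in\mathrm{Irr}(S\mid H)$'' is literally the statement $V(S\mid H)\leq N$.
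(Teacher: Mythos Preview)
Your proposal is correct and follows essentially the same route as the paper: part~(1) is obtained from the fact that the condition $V(S\mid H)\leq N$ is closed under products of $S$-normal subgroups (the paper packages this as a separate lemma citing \cite{Burkett2020Vanishing-offProducts}, whereas you reprove it directly), and part~(2) is handled via the $S$-normal closure of $g$. Your subgroup $\langle \mathrm{Cl}_S(g)\rangle$ is exactly the paper's $\langle g\rangle^S$, since products and inverses of $S$-classes are unions of $S$-classes, so your ``check it is $S$-normal'' step succeeds and the two arguments coincide.
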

 \begin{theorem}\label{theorem_uprod}
       Let $H, N\triangle_S G$. Then 
\begin{enumerate}
    \item $ H\leq U(S\mid N) $ if and only if $S$ is a $\Delta$-product over $H$ and $N$.
    \item $ N= U(S\mid N) $  if and only if $S$ is a $\ast$-product over $N$.
\end{enumerate} 
 \end{theorem}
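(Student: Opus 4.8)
The plan is to derive both statements from Theorem~\ref{ugroupp}(1), the definition of an $S$-Camina triple, and \cite[Proposition 7.3]{Hendrickson2012SupercharacterProducts}, once two auxiliary facts are in place. \emph{Fact A}: for every $K\triangle_S G$ one has $K\leq V(S\mid K)$, so $V(S\mid K)\leq N$ forces $K\leq N$, and hence $U(S\mid N)\leq N$ for every $N\triangle_S G$. To see this, note that for $1\neq k\in K$ the superclass $\mathrm{Cl}_S(k)$ cannot be a union of $K$-cosets, since such a union would contain $kK\ni 1$ and then $\mathrm{Cl}_S(k)=\{1\}$, contradicting $k\neq 1$; hence the superclass indicator $\mathbf 1_{\mathrm{Cl}_S(k)}$ is not inflated from $G/K$, so some $\chi\in\mathrm{Irr}(S\mid K)$ has $\chi(k)\neq 0$, i.e.\ $k\in V(S\mid K)$. \emph{Fact B}: for $H\triangle_S G$, the triple $(G,N,H)$ is an $S$-Camina triple if and only if every $\chi\in\mathrm{Irr}(S\mid H)$ vanishes on $G\setminus N$.

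I would obtain Fact B (or simply quote it from \cite{Burkett2020Vanishing-offProducts}, where such triples are studied) from the following \emph{block lemma}: if $N\triangle_S G$, then every supercharacter block $X\subseteq\mathrm{Irr}(G)$ lies entirely inside $\mathrm{Irr}(G/N)$ or entirely outside it. This follows by writing the superclass function $\mathbf 1_N$ both as $\tfrac{|N|}{|G|}\sum_{\psi\in\mathrm{Irr}(G/N)}\psi(1)\psi$ (the inflated regular character of $G/N$) and as a combination of the supercharacters $\sigma_X=\sum_{\psi\in X}\psi(1)\psi$, then comparing the coefficient of each $\psi(1)\psi$: it depends only on the block of $\psi$ yet equals $|N|/|G|$ or $0$ according as $N\leq\ker\psi$ or not. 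Granting the block lemma, if $\chi\in\mathrm{Irr}(S\mid H)$ then no ordinary constituent of $\chi$ has $H$ in its kernel, so the central idempotent $\tfrac1{|H|}\sum_{h\in H}h$ annihilates a representation affording $\chi$, giving $\tfrac1{|H|}\sum_{h\in H}\chi(gh)=0$ for all $g$. If in addition every $S$-class off $N$ is a union of $H$-cosets, then for $g\notin N$ we get $\chi(g)=\tfrac1{|H|}\sum_{h\in H}\chi(gh)=0$, which is one direction of Fact B; the other direction is the same identity applied to $\mathbf 1_{\mathrm{Cl}_S(g)}$, again via the block lemma.

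For part (1): the property ``every $\chi\in\mathrm{Irr}(S\mid U)$ vanishes on $G\setminus N$'' passes to subgroups of $U$, because $H\leq U$ gives $\mathrm{Irr}(S\mid H)\subseteq\mathrm{Irr}(S\mid U)$. Hence by Theorem~\ref{ugroupp}(1), $H\leq U(S\mid N)$ if and only if every $\chi\in\mathrm{Irr}(S\mid H)$ vanishes on $G\setminus N$, which by Fact B says exactly that $(G,N,H)$ is an $S$-Camina triple. Any such triple forces $H\leq N$ (for $h\in H\setminus N$ the superclass $\mathrm{Cl}_S(h)\subseteq G\setminus N$ would be a union of $H$-cosets, hence contain $1$, forcing $h=1\in N$), so \cite[Proposition 7.3]{Hendrickson2012SupercharacterProducts} applies and yields that $(G,N,H)$ is an $S$-Camina triple if and only if $S$ is a $\triangle$-product over $H$ and $N$. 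This proves (1).

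For part (2), take $H=N$ in part (1): since $U(S\mid N)\leq N$ always by Fact A, we have $N=U(S\mid N)$ iff $N\leq U(S\mid N)$ iff $S$ is a $\triangle$-product over $N$ and $N$; and a $\triangle$-product over $N$ and $N$ is precisely a $\ast$-product over $N$, both being equivalent to $(G,N)$ being an $S$-Camina pair (the discussion following Theorem~\ref{corgcp}). The main obstacle is Fact B: the classical ``average over the coset'' argument does not transfer verbatim, since a supercharacter may a priori have some ordinary constituents with $H$ in their kernel and some without; the block lemma is exactly what rules this out when $H$ is $S$-normal, and securing it cleanly is the technical heart of the proof.
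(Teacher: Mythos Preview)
Your argument is correct and follows the same logical route as the paper's: both reduce (1) to the equivalence of $H\leq U(S\mid N)$ with the vanishing of $\mathrm{Irr}(S\mid H)$ on $G\setminus N$, then to the $S$-Camina triple/$\Delta$-product characterization, and obtain (2) from (1) via $U(S\mid N)\leq N$. The paper simply cites your Fact~A and Fact~B from \cite{Burkett2020Vanishing-offProducts} (they are Lemma~\ref{nvsn} and the equivalence $(1)\Leftrightarrow(4)\Leftrightarrow(5)$ of Theorem~\ref{burvanish}) and enters through Lemma~\ref{ugroup} rather than Theorem~\ref{ugroupp}(1), so your block-lemma derivation, while correct, is redundant.
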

        	Finally, we discuss the special case of $VZ(S)$-group. 
			A group $G$ with supercharacter theory $S$ is said to be a \emph{$VZ(S)$-group} if every $\chi\in$ Irr$\left(S\mid [G,S]\right)$ vanishes on $G\setminus Z(S)$. Regarding this, we obtain an equivalence of $VZ(S)$-group in terms of $V(S)$ and $U(S)$.
        \begin{theorem}
	    Let $G$ be a non $S$-abelian group. The following are equivalent:
		\begin{enumerate}
			\item $G$ is a $VZ(S)$-group.
			\item $Z(S) = V(S)$.
			\item $U(S) = [G,S]$.
		\end{enumerate}
	\end{theorem}

	\section{Preliminaries}
    In this section, we recall a few definitions and results from the existing literature. We begin with the theory of supercharacters.
    \begin{definition}\cite{Diaconis2008SupercharactersGroups}
		Let $G$ be a finite group, let $\mathcal{X}$ be a partition of the set Irr$(G)$ of irreducible characters of $G$, and let $\mathcal{Y}$ be a partition of $G$. We call the ordered pair $(\mathcal{X},\mathcal{Y})$ a supercharacter theory $S$ if
		\begin{enumerate}
			\item[$(i)$] $\mathcal{Y}$ contains $\{1\}$, where $1$ denotes the identity element of $G$,
			\item[$(ii)$] $|\mathcal{X}| =|\mathcal{Y}|$,
			\item[$(iii)$] for each $X\in\mathcal{X}$, the character $\sigma_X=\sum_{\chi\in X}\chi(1)\chi$ is constant on each $Y\in\mathcal{Y}$.
		\end{enumerate}
		The characters $\sigma_X$ are called supercharacters and the elements $Y\in\mathcal{Y}$ are called superclasses. 
	\end{definition}
     As in the classical case, the supercharacter values over all superclasses can be arranged in a tabular form, called the supercharacter table. Throughout this paper, let $G$ be a finite group and $S$ be a supercharacter theory of $G$. Also, let $\sigma_1, \sigma_2, \ldots, \sigma_m$ denote the $S$-characters corresponding to the parts  $X_1, X_2, \ldots, X_m$ of $\text{Irr}(G)$ while $K_1, K_2, \ldots, K_m$ represent the $S$-classes of $G$. Then, from \cite[Eq. 2.4]{Fowler2014RamanujanSupercharacters}, we notice that the $S$-characters satisfy a row orthogonality property with respect to the usual Hermitian inner product given by 

\begin{align}\label{roworth}
     \langle \sigma_i, \sigma_j \rangle 
    = \frac{1}{|G|} \sum_{r=1}^m |K_r| \, \sigma_i(K_r)\overline{\sigma_j(K_r)}=\delta_{i,j}\,\|X_i\|^2,
\end{align}
where  $ \|X_i\|^2 = \displaystyle\sum_{\chi \in X_i} \chi(1)^2$ and $\delta_{i,j}$ is the standard Kronecker delta.   There exists an extremely useful analog for the column orthogonality of a supercharacter table as well. 
                    \begin{theorem}\label{corth}\cite[Theorem 3.3]{Burkett2020AnTheory} Let $G$ be a group with supercharacter theory $S$. Then for any $g,h\in G$, we have
                    \begin{align*}
                        \frac{\abs{G}}{\abs{Cl_S(g)}} = \sum_{\chi \in Irr(S)} \frac{\chi(g)\overline{\chi(h}) }{\chi(1)}
                    \end{align*}
                    if $h\in Cl_S(g)$, and is 0 otherwise.                        
                    \end{theorem}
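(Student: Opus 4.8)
The plan is to derive the stated column-orthogonality relation from the row-orthogonality relation \eqref{roworth} by recasting the rescaled supercharacter table as a unitary matrix and then exploiting the fact that a supercharacter table is square. First I would record the identity $\sigma_i(1)=\sum_{\chi\in X_i}\chi(1)^2=\|X_i\|^2$, so that \eqref{roworth} reads $\tfrac{1}{\abs{G}}\sum_{r}\abs{K_r}\,\sigma_i(K_r)\overline{\sigma_j(K_r)}=\delta_{ij}\,\sigma_i(1)$. Dividing through by $\sqrt{\sigma_i(1)\sigma_j(1)}$ and splitting the class-size weight symmetrically between the two factors, I would introduce the $m\times m$ matrix $A$ with entries
\[
A_{ir}=\sqrt{\frac{\abs{K_r}}{\abs{G}}}\,\frac{\sigma_i(K_r)}{\sqrt{\sigma_i(1)}}.
\]
With this normalization \eqref{roworth} says exactly that $\sum_r A_{ir}\overline{A_{jr}}=\delta_{ij}$, i.e. $AA^{\ast}=I_m$, where $A^{\ast}$ is the conjugate transpose.

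The decisive structural input is condition $(ii)$ of the definition of a supercharacter theory, namely $\abs{\mathcal X}=\abs{\mathcal Y}=m$, which makes $A$ a \emph{square} matrix. For a square matrix a one-sided inverse is automatically two-sided, so $AA^{\ast}=I_m$ forces $A^{\ast}A=I_m$. Reading this off coordinate-wise gives $\sum_i\overline{A_{ir}}\,A_{is}=\delta_{rs}$, that is,
\[
\frac{\sqrt{\abs{K_r}\abs{K_s}}}{\abs{G}}\sum_{i}\frac{\overline{\sigma_i(K_r)}\,\sigma_i(K_s)}{\sigma_i(1)}=\delta_{rs}.
\]

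Finally I would specialize. Given $g,h\in G$, say $g\in K_r$ and $h\in K_s$; since each $\sigma_i$ is constant on $S$-classes we have $\sigma_i(g)=\sigma_i(K_r)$ and $\sigma_i(h)=\sigma_i(K_s)$. If $h\in\mathrm{Cl}_S(g)$ then $r=s$, the weight collapses to $\abs{K_r}/\abs{G}$, and the summand $\overline{\sigma_i(K_r)}\sigma_i(K_r)=\abs{\sigma_i(g)}^2$ is real, so conjugation is immaterial and the relation rearranges to $\sum_i \sigma_i(g)\overline{\sigma_i(h)}/\sigma_i(1)=\abs{G}/\abs{K_r}=\abs{G}/\abs{\mathrm{Cl}_S(g)}$; if $h\notin\mathrm{Cl}_S(g)$ then $r\neq s$ and the same identity yields the value $0$, which is the asserted dichotomy. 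The only genuinely delicate point is the passage from the weighted orthogonality \eqref{roworth} to literal matrix unitarity: one must choose the normalization so the class-size and degree weights are distributed evenly between the two factors, and then recognize that squareness ($\abs{\mathcal X}=\abs{\mathcal Y}$) is precisely what upgrades $AA^{\ast}=I$ to $A^{\ast}A=I$. Everything after that is bookkeeping, together with the observation that in the diagonal case the complex conjugate has no effect.
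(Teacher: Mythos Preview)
Your argument is correct: the identification $\sigma_i(1)=\|X_i\|^2$ lets you rewrite \eqref{roworth} as $AA^\ast=I_m$ for the normalized matrix $A_{ir}=\sqrt{\abs{K_r}/\abs{G}}\,\sigma_i(K_r)/\sqrt{\sigma_i(1)}$, squareness ($\abs{\mathcal X}=\abs{\mathcal Y}$) upgrades this to $A^\ast A=I_m$, and unwinding the normalization recovers the stated column relation. The only cosmetic point is that your displayed identity has the conjugate on $\sigma_i(K_r)$ rather than $\sigma_i(K_s)$, but since $\delta_{rs}$ is symmetric in $r,s$ you may simply swap their roles (or take the complex conjugate of the whole equation, noting the right side is real) to match the theorem's convention $\chi(g)\overline{\chi(h)}$.

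As for comparison with the paper: there is nothing to compare against. The paper does not supply a proof of this theorem; it merely quotes the result from \cite[Theorem~3.3]{Burkett2020AnTheory} as background. Your unitary-matrix derivation is in fact the standard route to such column orthogonalities (it is exactly how one passes from first to second orthogonality in ordinary character theory), and it is essentially the argument given in Burkett's original paper as well.
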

    \par In classical group theory, it is common to use normal subgroups and quotient groups to deduce properties about the parent group. Using exactly this idea, we can achieve important results in case of supercharacter theory as well. In this context, we recall that a subgroup $N\leq G$ is called $S$-normal, denoted by $N\triangle_S G$, if $N$ is the union of some $S$-classes of $G$. For an $S$-normal subgroup $N$ of $G$, Hendrickson \cite{Hendrickson2012SupercharacterProducts} established that there is an induced supercharacter theory $S_N$ on $N$, called \textit{restriction} and $S^{G/N}$ on $G/N$, called \textit{deflation} for which $$\text{Cl}(S_N)=\left\{K\in \text{Cl}(S):K\subseteq N\right\}$$ and  $$\text{Cl}(S^{G/N})=\left\{\pi(K):K\in \text{Cl}(S)\right\}.$$ 
    Here and throughout this paper, $\pi:G\rightarrow G/N$ is the usual canonical mapping. If $H$ and $N$ are two $S$-normal subgroups of $G$ such that $H\leq N$, then the induced supercharacter theory on $N/H$, denoted by $S_{N/H}$, is given by $S_{N/H}=(S^{G/H})_{N/H}.$ 
    \par The induced theories described above allow us to take supercharacter theories of $S$-normal subgroups and respective quotient groups to form supercharacter theories of $G$, provided certain suitable conditions are satisfied. One such type of construction of supercharacter theory is the  \emph{$\ast$-product}. The superclasses of $S_N \ast S_{G/N}$ are either $S_N$-classes or preimages of $S_{G/N}$-classes in $G$. It turns out that every 
$S_N \ast S_{G/N}$-class is a union of $S$-classes, and 
that $S$ coincides with $S_N \ast S_{G/N}$ if and only if every 
$S$-class lying outside of $N$ is a union of full $N$-cosets. For another $S$-normal subgroup $M$ such that $M\leq N$, the  \emph{$\Delta$-product} over $M$ and $N$ is a generalisation of $*$-product. For details, see \cite{Hendrickson2012SupercharacterProducts}.
\par The supercharacters of the deflated theory $S^{G/[G,S]}$ can naturally be considered as the set of linear $S$-characters. Thus, this set of characters, denoted by $\text{Irr}(S/[G,S])$, coincides with $\text{Irr}(G/[G,S])$. Burkett has defined a number of supercharacter analogs in \cite{Burkett2020AnTheory, Burkett2018SubnormalityTheory}.
     As with normal groups, Burkett showed that the preservation of quotient structure applies to $S$-normal subgroups as well.
	\begin{lemma}\cite[Lemma 2.1]{Burkett2020AnTheory}\label{lemma1}
		Let $N$ be $S$-normal in $G$, and let $M\leq G$ contain $N$. Then $M$ is $S$-normal if and only if $M/N$ is $S^{G/N}$-normal. 
	\end{lemma}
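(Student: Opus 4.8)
The plan is to exploit the explicit description of the deflated superclasses, namely $\text{Cl}(S^{G/N}) = \{\pi(K) : K \in \text{Cl}(S)\}$, together with the observation that $N = \ker\pi \leq M$ forces $M = \pi^{-1}(M/N)$. Before starting either direction, I would record that every $S$-class is a union of ordinary conjugacy classes (since $S$-characters are class functions), so an $S$-normal subgroup is in particular an ordinary normal subgroup; hence $N \triangleleft G$, and $M/N$ is a genuine subgroup of $G/N$.

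For the forward implication, suppose $M \triangle_S G$, so that $M = \bigcup_i K_i$ is a union of $S$-classes. Applying the canonical projection and using $N \leq M$ gives $M/N = \pi(M) = \bigcup_i \pi(K_i)$. Each $\pi(K_i)$ is by definition an $S^{G/N}$-class, so $M/N$ is a union of $S^{G/N}$-classes and therefore $S^{G/N}$-normal. This direction is essentially immediate once one unwinds the definition of deflation.

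For the converse, the key identity I would establish is $\text{Cl}_{S^{G/N}}(\pi(g)) = \pi(\text{Cl}_S(g))$ for every $g \in G$. This follows directly from the description of the deflated classes: the sets $\pi(K)$ with $K \in \text{Cl}(S)$ partition $G/N$, and $\pi(g)$ lies in $\pi(\text{Cl}_S(g))$, which must then be the unique deflated class containing it. Now assume $M/N$ is $S^{G/N}$-normal and take any $g \in M$. Then $\pi(g) \in M/N$, and $S^{G/N}$-normality yields $\text{Cl}_{S^{G/N}}(\pi(g)) \subseteq M/N$. Combining this with the identity gives $\pi(\text{Cl}_S(g)) \subseteq M/N$, so every $y \in \text{Cl}_S(g)$ satisfies $\pi(y) \in M/N$, that is, $y \in \pi^{-1}(M/N) = M$. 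Hence $\text{Cl}_S(g) \subseteq M$ whenever $g \in M$, which shows $M = \bigcup_{g \in M} \text{Cl}_S(g)$ is a union of $S$-classes, so $M \triangle_S G$.

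The only real subtlety—and the step I would treat most carefully—is the class-correspondence identity $\text{Cl}_{S^{G/N}}(\pi(g)) = \pi(\text{Cl}_S(g))$ together with the preimage computation $M = \pi^{-1}(M/N)$, the latter relying crucially on $N = \ker\pi$ being contained in $M$. Everything else is a direct manipulation of unions under $\pi$ and its inverse, so I do not anticipate any genuine obstruction; the proof is short, with the deflation definition doing essentially all of the work.
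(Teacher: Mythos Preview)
The paper does not supply a proof of this lemma; it is quoted as a preliminary result from \cite[Lemma 2.1]{Burkett2020AnTheory}. Your argument is correct and is the natural direct proof: both directions follow by unwinding the description $\text{Cl}(S^{G/N}) = \{\pi(K) : K \in \text{Cl}(S)\}$ of the deflated partition together with $\pi^{-1}(M/N) = M$ (valid because $N \leq M$). The identity $\text{Cl}_{S^{G/N}}(\pi(g)) = \pi(\text{Cl}_S(g))$ that you single out as the crux is indeed immediate once one accepts that the images $\pi(K)$ partition $G/N$, which is part of Hendrickson's construction of $S^{G/N}$ as a supercharacter theory. There is no gap.
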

	In \cite{Burkett2020AnTheory}, Burkett further defined a supercharacter theory analog for the center of a group. For a group $G$ with a supercharacter $S$, Burkett \cite{Burkett2020AnTheory} defined the supercharacter analog $Z(S)$ of center as
	$$Z(S)= \left\{g\in G: \abs{\text{Cl}_S(g)}=1\right\}.$$
    A group $G$ with supercharacter theory $S$ is called $S$-abelian if $Z(S)=G$, otherwise non $S$-abelian. For $H\leq G$, Burkett further defined the supercharacter analog $[H:S]$ of the commutator subgroup as $$[H:S]:=\langle g^{-1}k: g \in H \text{ and } k\in\text{Cl}_S(g)\rangle.$$ 
    \begin{lemma}\label{comm}\cite[Corollary 4.2.4]{Burkett2018SubnormalityTheory}
        Let $S$ be a supercharacter theory of $G$. Then $ [G, S]$  can be computed as
\[
[ G, S ] = \bigcap_{\substack{\varphi \in Irr(S/[G,S]) }} \ker(\varphi).
\]

    \end{lemma}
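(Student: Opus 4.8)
The plan is to pass to the quotient $\bar G := G/[G,S]$ and reduce the identity to the elementary fact that the irreducible characters of a finite group have trivial common kernel. Write $\pi : G \to \bar G$ for the canonical map, so that $\ker \pi = [G,S]$. One inclusion is immediate: by definition every $\varphi \in \mathrm{Irr}(S/[G,S])$ has $[G,S]$ in its kernel, whence $[G,S] \subseteq \bigcap_{\varphi} \ker(\varphi)$. It is instructive to see this directly from the generators of $[G,S]$: if $\varphi$ is a linear $S$-character then $\varphi$ is a homomorphism $G \to \mathbb{C}^{\times}$ that is constant on $S$-classes, so for any $g \in G$ and $k \in \mathrm{Cl}_S(g)$ we get $\varphi(g^{-1}k) = \overline{\varphi(g)}\,\varphi(k) = \overline{\varphi(g)}\,\varphi(g) = 1$; hence each generator $g^{-1}k$ of $[G,S]$ lies in $\ker(\varphi)$.

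The substance is the reverse inclusion $\bigcap_{\varphi} \ker(\varphi) \subseteq [G,S]$, and here I would use the description recalled in the preliminaries, namely $\mathrm{Irr}(S/[G,S]) = \mathrm{Irr}(G/[G,S])$. I would first re-derive this from the definition of $[G,S]$: since $[G,S]$ is $S$-normal, the deflated theory $S^{G/[G,S]}$ on $\bar G$ is defined, with superclasses $\{\pi(K) : K \in \mathrm{Cl}(S)\}$. For $k \in \mathrm{Cl}_S(g)$ we have $g^{-1}k \in [G,S] = \ker \pi$, so $\pi(k) = \pi(g)$ and therefore $\pi(\mathrm{Cl}_S(g)) = \{\pi(g)\}$; that is, every superclass of the deflated theory is a singleton. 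Counting then forces the deflated theory to be the full ordinary character theory of $\bar G$: the number of superclasses is $\abs{\bar G}$, hence so is the number of supercharacters, and since the supercharacters partition $\mathrm{Irr}(\bar G)$ into $\abs{\bar G}$ nonempty blocks while $\abs{\mathrm{Irr}(\bar G)} \le \abs{\bar G}$, each block must be a singleton. Thus $\bar G$ is abelian, its irreducible characters are all linear, and $\mathrm{Irr}(S/[G,S])$ consists exactly of their inflations $\bar\varphi \circ \pi$.

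With this identification in hand the computation is routine. For $\varphi = \bar\varphi \circ \pi$ we have $\ker(\varphi) = \pi^{-1}(\ker(\bar\varphi))$, so intersecting over all $\bar\varphi \in \mathrm{Irr}(\bar G)$ gives
\[
\bigcap_{\varphi \in \mathrm{Irr}(S/[G,S])} \ker(\varphi) = \pi^{-1}\!\left( \bigcap_{\bar\varphi \in \mathrm{Irr}(\bar G)} \ker(\bar\varphi) \right).
\]
The regular representation of the finite group $\bar G$ is faithful and decomposes into irreducibles, so $\bigcap_{\bar\varphi \in \mathrm{Irr}(\bar G)} \ker(\bar\varphi) = \{1\}$; applying $\pi^{-1}$ yields $\ker \pi = [G,S]$, which is exactly the desired equality.

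The main obstacle is the middle step — verifying that the deflated theory on $G/[G,S]$ is the complete character theory, equivalently that $\mathrm{Irr}(S/[G,S])$ exhausts $\mathrm{Irr}(G/[G,S])$ rather than being a proper subfamily of the linear characters. This is precisely the point at which the definition of $[G,S]$ as the group generated by the elements $g^{-1}k$ is used, via the observation that it collapses every $S$-class to a point in the quotient. If one simply takes the identity $\mathrm{Irr}(S/[G,S]) = \mathrm{Irr}(G/[G,S])$ as recorded in the preliminaries, the remaining argument is the short kernel computation above.
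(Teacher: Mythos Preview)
The paper does not supply its own proof of this lemma; it is quoted in the preliminaries as \cite[Corollary 4.2.4]{Burkett2018SubnormalityTheory} and used as a black box. So there is nothing to compare your argument against within the paper itself.

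That said, your argument is correct and self-contained. The forward inclusion is immediate from the definition of $\mathrm{Irr}(S/[G,S])$. For the reverse inclusion, the crucial point is exactly the one you isolate: that the deflated theory $S^{G/[G,S]}$ is the full classical character theory of $G/[G,S]$, so that $\mathrm{Irr}(S/[G,S])$ is all of $\mathrm{Irr}(G/[G,S])$ rather than a proper subset. Your derivation of this---observing that every $S$-class collapses to a point under $\pi$ because the generators $g^{-1}k$ of $[G,S]$ lie in $\ker\pi$, and then counting---is clean and is essentially the content of \cite[Proposition~3.11]{Burkett2020AnTheory}, which the paper invokes elsewhere for the same purpose. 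The paper records the identification $\mathrm{Irr}(S/[G,S]) = \mathrm{Irr}(G/[G,S])$ just before the lemma, so from the paper's point of view the lemma is an immediate consequence of that identification together with the standard fact $\bigcap_{\chi\in\mathrm{Irr}(H)}\ker\chi = 1$; your write-up makes both of these steps explicit.

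One small remark: you silently use that $[G,S]$ is $S$-normal (needed for the deflated theory to exist). This is true and is part of the background from Burkett's work, but if you want the argument to be fully self-contained you might note it.
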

	With these notations, an \emph{$S$-central series} is defined as a series 
	$$G=N_1\geq N_2\geq\cdots\geq N_{r+1}=1$$ of $S$-normal subgroups  if  $N_i/N_{i+1}\leq Z(S^{G/N_{i+1}})$ for all $i=1,2,\ldots, r$.  $G$ is said to be  \emph{$S$-nilpotent} if it has an $S$-central series. The \emph{lower $S$-central series} is a series of subgroups defined inductively as   $\gamma_i(S)=[\gamma_{i-1}(S), S]$ for $i\geq 2$ and $\gamma_1(S)=G$. In the event that the supercharacter theory is clear, we simply write $\gamma_i$.  Similarly, the \emph{upper $S$-central series} is deifned as $\zeta_0(S)$=1,  $\zeta_i(S)/\zeta_{i-1}(S)=Z(S^{G/\zeta_{i-1}(S)})$, for all $i\geq 1$.


 We recall that $V(S)$ is the smallest subgroup $V \leq G$ such that every character in  $\text{Irr}(S\mid [G,S])$ vanishes on $G \setminus V$.  For future reference, we state some results from \cite{Burkett2020Vanishing-offProducts}.
 \begin{lemma}\label{nvsn}\cite[Lemma 5.2]{Burkett2020Vanishing-offProducts}
Let $S$ be a supercharacter theory of a group $G$, and let $N$ be $S$-normal. Then $N \leq V(S \mid N).$
\end{lemma}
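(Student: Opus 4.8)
The plan is to show that every nontrivial element of $N$ already occurs among the generators of $V(S\mid N)$; that is, for each $n\in N\setminus\{1\}$ I will exhibit a supercharacter $\chi\in\mathrm{Irr}(S\mid N)$ with $\chi(n)\neq 0$. Since $1\in V(S\mid N)$ automatically and $V(S\mid N)$ is a subgroup, this gives $N\subseteq V(S\mid N)$, hence $N\leq V(S\mid N)$. The tool that detects such non-vanishing characters is the column orthogonality relation of Theorem \ref{corth}, which converts the question into the positivity of a sum of squares.

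Fix $n\in N$ with $n\neq 1$ and apply Theorem \ref{corth} with $g=h=n$ to obtain
\[
\frac{\abs{G}}{\abs{Cl_S(n)}}=\sum_{\chi\in \mathrm{Irr}(S)}\frac{\abs{\chi(n)}^2}{\chi(1)}.
\]
I then split $\mathrm{Irr}(S)=\mathrm{Irr}(S/N)\sqcup\mathrm{Irr}(S\mid N)$. For $\chi\in\mathrm{Irr}(S/N)$ we have $N\leq\ker(\chi)$, so $\chi(n)=\chi(1)$ and the corresponding summand equals $\chi(1)$. The quantity $\sum_{\chi\in\mathrm{Irr}(S/N)}\chi(1)$ I evaluate by identifying $\mathrm{Irr}(S/N)$ with the supercharacters of the deflated theory $S^{G/N}$ and applying Theorem \ref{corth} on $G/N$ at the identity, whose superclass is the singleton $\{1\}$; this yields $\sum_{\chi\in\mathrm{Irr}(S/N)}\chi(1)=\abs{G}/\abs{N}$. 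Substituting back gives
\[
\sum_{\chi\in \mathrm{Irr}(S\mid N)}\frac{\abs{\chi(n)}^2}{\chi(1)}=\frac{\abs{G}}{\abs{Cl_S(n)}}-\frac{\abs{G}}{\abs{N}}.
\]

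It remains to force the right-hand side to be strictly positive, and this is exactly where the $S$-normality of $N$ is used: since $N$ is a union of $S$-classes, both $\{1\}=Cl_S(1)$ and $Cl_S(n)$ are contained in $N$, and because $n\neq 1$ these are distinct classes, so $\abs{Cl_S(n)}\leq\abs{N}-1<\abs{N}$. Hence $\abs{G}/\abs{Cl_S(n)}>\abs{G}/\abs{N}$, the sum above is strictly positive, and some $\chi\in\mathrm{Irr}(S\mid N)$ must satisfy $\chi(n)\neq 0$, as required. I expect the only delicate point to be the quotient identity $\sum_{\chi\in\mathrm{Irr}(S/N)}\chi(1)=\abs{G}/\abs{N}$, which relies on correctly matching $\mathrm{Irr}(S/N)$ with $\mathrm{Irr}(S^{G/N})$ under deflation and on the triviality of the identity superclass in $G/N$; the remaining steps are a direct manipulation of the orthogonality relation together with the elementary counting bound $\abs{Cl_S(n)}<\abs{N}$.
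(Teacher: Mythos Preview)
The paper does not supply its own proof of this lemma; it is quoted verbatim from \cite[Lemma~5.2]{Burkett2020Vanishing-offProducts} and stated without argument in the preliminaries. So there is nothing to compare against directly.

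That said, your proof is correct and is exactly the kind of argument one would expect. The column-orthogonality splitting you perform is the same device the paper uses elsewhere (see the proof of Lemma~\ref{celt} and equation~\eqref{eqn_cls}, where the sum over $\mathrm{Irr}(S)$ is split along $\mathrm{Irr}(S/[G,S])$ and $\mathrm{Irr}(S\mid [G,S])$ and the first piece is computed via Theorem~\ref{corth} on the quotient). Your identification $\sum_{\chi\in\mathrm{Irr}(S/N)}\chi(1)=\abs{G}/\abs{N}$ is indeed the only step requiring care, and it is justified precisely as you say: $\mathrm{Irr}(S/N)$ is in degree-preserving bijection with $\mathrm{Irr}(S^{G/N})$, and Theorem~\ref{corth} applied at the identity coset gives $\abs{G/N}$ since $\{1\}$ is a superclass in any supercharacter theory. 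The strict inequality $\abs{Cl_S(n)}<\abs{N}$ for $n\in N\setminus\{1\}$ then follows, as you observe, from $S$-normality of $N$ together with $Cl_S(1)=\{1\}$.
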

    \begin{theorem}\cite[Proposition 4.2]{Burkett2020Vanishing-offProducts}
Let $S$ be a supercharacter theory of $G$, and let $N$ be $S$-normal. Then 
\[
V(S \mid N) \lhd_{S} G 
\quad \text{and} \quad 
V(S \mid N) = \prod_{\chi \in Irr(S \mid N)} V(\chi),
\]
where 
\[
V(\chi) = \{\, g \in G \mid \chi(g) \neq 0 \,\}.
\]
\end{theorem}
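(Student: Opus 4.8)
The plan is to deduce both assertions from one elementary observation: for every $S$-character $\chi$ the level set $V(\chi)=\{g\in G:\chi(g)\ne 0\}$ is a union of $S$-classes. This holds because $\chi$ is constant on each $S$-class, so the condition $\chi(g)\ne 0$ depends only on $\mathrm{Cl}_S(g)$; moreover $1\in V(\chi)$ since $\chi(1)\ne 0$, and $V(\chi)=V(\chi)^{-1}$ since $\chi(g^{-1})=\overline{\chi(g)}$. Alongside this I would invoke one standard structural fact of a supercharacter theory (from \cite{Diaconis2008SupercharactersGroups}): the $\mathbb{C}$-span of the $S$-class sums is a subalgebra of $Z(\mathbb{C}[G])$ with nonnegative structure constants, from which a support argument shows that the setwise product $KL$ of two $S$-classes is again a union of $S$-classes. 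Consequently, if $A$ and $B$ are unions of $S$-classes then so is $AB$.

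For the product decomposition I would simply unwind the definition. By definition $V(S\mid N)=\langle g\in G:\text{there is }\chi\in\mathrm{Irr}(S\mid N)\text{ with }\chi(g)\ne 0\rangle$, and the indicated generating set is precisely $\bigcup_{\chi\in\mathrm{Irr}(S\mid N)}V(\chi)$. Since the subgroup generated by a union of subsets equals the subgroup they jointly generate, we obtain $V(S\mid N)=\langle\,\bigcup_{\chi}V(\chi)\,\rangle=\prod_{\chi\in\mathrm{Irr}(S\mid N)}V(\chi)$, where the product is read as the join of the sets $V(\chi)$ (the subgroup they generate together, matching the classical convention in which the nonvanishing elements of $\chi$ generate its vanishing-off subgroup). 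Both inclusions are then formal: each $V(\chi)$ lies in $V(S\mid N)$, and $V(S\mid N)$ is generated by their union.

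The substantive part is the $S$-normality, which I would obtain from the general principle that a subgroup generated by a union of $S$-classes is $S$-normal, applied to $T:=\bigcup_{\chi\in\mathrm{Irr}(S\mid N)}V(\chi)$. Put $T_0=T\cup\{1\}$; by the first paragraph $T$ is a union of $S$-classes with $T=T^{-1}$, so $T_0$ is a union of $S$-classes as well. In the finite group $G$ one has $\langle T\rangle=\bigcup_{n\ge 0}T_0^{\,n}$, an ascending union because $1\in T_0$. Using the product-closure fact and induction on $n$, every $T_0^{\,n}$ is a union of $S$-classes; hence so is $V(S\mid N)=\langle T\rangle$, which is exactly the statement $V(S\mid N)\lhd_{S} G$.

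The main obstacle is this normality step, and within it the appeal to the superclass-algebra product-closure property, which is what makes the class-by-class bookkeeping go through; the product formula, by contrast, is essentially a restatement of the definition. I would take care to verify the supporting points flagged above — that each $V(\chi)$ is $S$-class-closed, symmetric, and contains $1$; that setwise products of unions of $S$-classes remain unions of $S$-classes; and that the ascending-union description of $\langle T\rangle$ is valid in a finite group — since the entire argument rests on them.
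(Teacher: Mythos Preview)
Your argument is correct. Note, however, that this theorem is stated in the paper as a preliminary result quoted from \cite{Burkett2020Vanishing-offProducts} (Proposition~4.2 there), and the present paper gives no proof of it; there is therefore no in-paper proof to compare against. Your route---observing that each $V(\chi)$ is a union of $S$-classes because supercharacters are superclass functions, and then invoking the Schur-ring property that setwise products of $S$-classes are unions of $S$-classes to conclude that $\langle\bigcup_\chi V(\chi)\rangle$ is itself a union of $S$-classes---is the natural one and matches the spirit of the argument in the cited source. One small point worth making explicit: the paper uses Isaacs' convention that $V(\chi)$ denotes the \emph{subgroup generated by} the nonvanishing elements (see the introduction), so the displayed ``where $V(\chi)=\{g:\chi(g)\ne 0\}$'' is best read as specifying the generating set; under that reading each $V(\chi)$ is already a normal subgroup (the nonvanishing set is conjugation-invariant), and $\prod_\chi V(\chi)$ is a genuine product of normal subgroups, which slightly streamlines your final identification $V(S\mid N)=\prod_\chi V(\chi)$.
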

 Finally, we have the following theorem by Burkett and Lewis that serves as a characterization of $S$-Camina triple in terms of $V(S\mid N)$. 	
    \begin{theorem}\cite[Theorem 5.3]{Burkett2020Vanishing-offProducts}\label{burvanish}
        Let $G$ be group with supercharacter theory $S$ such that $M\leq N$. Then the following are equivalent 
	\begin{enumerate}
		\item $S$ is $\triangle$-product over $M$ and $N$; 
		\item For all $g\in G\setminus N$, $\abs{cl_S(g)}=\abs{cl_{S^{G/M}}(gM)}\abs{M}$;  
		\item For each $g\in G\setminus N$ and $m\in M$, there exists $k\in cl_S(g)$ such that $g^{-1}k=m$;
        \item $V(S|M)\leq N$;
        \item For all, $g\in G\setminus N$, $\chi(g)=0$ for any $\chi\in Irr(S|M)$.
	\end{enumerate}
    \end{theorem}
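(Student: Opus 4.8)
The plan is to prove the five conditions equivalent by the single chain
\[
(1)\Longleftrightarrow(3)\Longleftrightarrow(2)\Longleftrightarrow(5)\Longleftrightarrow(4),
\]
keeping the combinatorial conditions $(1),(2),(3)$ on one side, the character-theoretic conditions $(4),(5)$ on the other, and bridging the two with a column-orthogonality computation. Throughout I assume $M,N\triangle_S G$ with $M\le N$, so that the deflation $S^{G/M}$ and the sets $Irr(S/M)$, $Irr(S\mid M)$ are defined.

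First I would dispose of the easy pairs. By the characterization of the $\triangle$-product recalled above, condition $(1)$ says precisely that every $S$-class lying outside $N$ is a union of full $M$-cosets. Since $N$ is $S$-normal, $g\notin N$ forces $cl_S(g)\subseteq G\setminus N$, and $(3)$ is exactly the statement that $gM\subseteq cl_S(g)$ for every such $g$; running this over all elements of a fixed class shows the class is a union of full cosets, and the converse is immediate, giving $(1)\Leftrightarrow(3)$. For $(2)\Leftrightarrow(3)$ I would use $cl_{S^{G/M}}(gM)=\pi(cl_S(g))$, so that $\abs{cl_{S^{G/M}}(gM)}$ counts the $M$-cosets meeting $cl_S(g)$. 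Partitioning $cl_S(g)$ over these cosets yields $\abs{cl_S(g)}\le\abs{M}\,\abs{cl_{S^{G/M}}(gM)}$, with equality exactly when each meeting coset lies entirely inside the class; this is condition $(2)$ matched against $(3)$.

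The heart of the argument is the bridge $(2)\Leftrightarrow(5)$. Writing $Irr(S)=Irr(S/M)\sqcup Irr(S\mid M)$ and identifying each $\chi\in Irr(S/M)$ with the supercharacter $\bar\chi$ of $S^{G/M}$ for which $\chi(g)=\bar\chi(gM)$ and $\chi(1)=\bar\chi(1)$, I would apply Theorem \ref{corth} twice: once to $G$ at $h=g$, giving $\abs{G}/\abs{cl_S(g)}=\sum_{\chi\in Irr(S)}\abs{\chi(g)}^2/\chi(1)$, and once to $G/M$ at the image class $gM$, giving $(\abs{G}/\abs{M})/\abs{cl_{S^{G/M}}(gM)}=\sum_{\chi\in Irr(S/M)}\abs{\chi(g)}^2/\chi(1)$. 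Subtracting cancels the $Irr(S/M)$ contribution and produces, for $g\notin N$,
\[
\sum_{\chi\in Irr(S\mid M)}\frac{\abs{\chi(g)}^2}{\chi(1)}
=\abs{G}\left(\frac{1}{\abs{cl_S(g)}}-\frac{1}{\abs{M}\,\abs{cl_{S^{G/M}}(gM)}}\right).
\]
The right-hand side is nonnegative by the coset inequality above and vanishes precisely when $(2)$ holds; since every summand on the left is nonnegative, the sum is zero if and only if $\chi(g)=0$ for all $\chi\in Irr(S\mid M)$, which is $(5)$. Finally $(4)\Leftrightarrow(5)$ is immediate from the defining minimality of the vanishing-off subgroup: $(5)$ says every $\chi\in Irr(S\mid M)$ vanishes off $N$, so the smallest-subgroup property forces $V(S\mid M)\le N$, and conversely $V(S\mid M)\le N$ puts $G\setminus N$ inside $G\setminus V(S\mid M)$, where all such $\chi$ already vanish.

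The step I expect to require the most care is the orthogonality bridge $(2)\Leftrightarrow(5)$: I must be certain that deflation identifies $Irr(S/M)$ with $Irr(S^{G/M})$ in a degree- and value-preserving manner, so that the two instances of Theorem \ref{corth} are genuinely subtractable, and that the second instance is evaluated at the class $gM$ with $\abs{G/M}=\abs{G}/\abs{M}$. Once this identification is pinned down, the nonnegativity argument closing $(2)\Leftrightarrow(5)$ and the remaining pairings are routine.
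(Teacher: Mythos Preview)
The paper does not prove this theorem at all: it is quoted verbatim from \cite[Theorem~5.3]{Burkett2020Vanishing-offProducts} as a preliminary result, so there is no ``paper's own proof'' to compare your attempt against. Your proposed argument is nonetheless correct and complete. The chain $(1)\Leftrightarrow(3)\Leftrightarrow(2)$ via the coset-counting identity $cl_{S^{G/M}}(gM)=\pi(cl_S(g))$ is sound, the bridge $(2)\Leftrightarrow(5)$ obtained by subtracting the two instances of Theorem~\ref{corth} is exactly the mechanism the paper itself deploys in Lemma~\ref{celt} for the special case $M=[G,S]$, and $(4)\Leftrightarrow(5)$ is indeed immediate from the generating-set definition of $V(S\mid M)$. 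The only care point you flag---that the identification $Irr(S/M)\cong Irr(S^{G/M})$ preserves degrees and values so the two orthogonality relations can be subtracted---is handled by Hendrickson's deflation construction recalled in Section~2, so nothing is missing.
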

    As a particular case, we have the following.
    \begin{theorem}\cite[Theorem 6.1]{Burkett2020Vanishing-offProducts}\label{burvanishh}
Let $N \triangle_S G$. The following are equivalent:
\begin{enumerate}
    \item $S$ is a $\ast$-product over $N$;
    \item for each $g \in G \setminus N$ and $\chi \in Irr(S \mid N)$, we have $\chi(g) = 0$; and, for each $g \in N$, there exists $\chi \in Irr(S \mid N)$ such that $\chi(g) \neq 0$;
    \item $V(S \mid N) = N$.
\end{enumerate}
\end{theorem}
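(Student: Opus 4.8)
The plan is to deduce the statement from the Camina-triple characterization in Theorem~\ref{burvanish} by specializing to the case $M=N$. First I would record the identification that $S$ is a $\triangle$-product over $N$ and $N$ precisely when $S$ is a $\ast$-product over $N$: the defining condition of the $\triangle$-product over $M$ and $N$ is that every $S$-class lying outside $N$ be a union of $M$-cosets, and setting $M=N$ this reads that every $S$-class outside $N$ is a union of full $N$-cosets, which is exactly the defining condition of the $\ast$-product over $N$ recalled in the Preliminaries. With this identification, Theorem~\ref{burvanish} applied with $M=N$ immediately yields the equivalence of our (1) with the condition $V(S\mid N)\leq N$ (its clause (4)) and with the statement that every $\chi\in Irr(S\mid N)$ vanishes on $G\setminus N$ (its clause (5)), the latter being exactly the first assertion in our clause (2).

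Next I would promote the inequality $V(S\mid N)\leq N$ to the equality in (3). Since $N$ is $S$-normal, Lemma~\ref{nvsn} supplies the reverse containment $N\leq V(S\mid N)$ unconditionally, so $V(S\mid N)\leq N$ holds if and only if $V(S\mid N)=N$. Combined with the previous paragraph, this already establishes the equivalence of (1), (3), and the first half of (2); note in particular that the first half of (2) alone forces $V(S\mid N)\leq N$ and hence (3). Thus the only remaining task is to show that the second assertion in (2) — that for each $g\in N$ there exists $\chi\in Irr(S\mid N)$ with $\chi(g)\neq 0$ — always holds (for $N\neq 1$; the case $N=1$ is trivial), so that the full statement (2) is equivalent to the others.

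The main obstacle is exactly this second half of (2), since it is the one piece not delivered by Theorem~\ref{burvanish}. Here I would argue by contradiction using the column orthogonality relation of Theorem~\ref{corth}. Fix $g\in N$ with $g\neq 1$ and suppose $\chi(g)=0$ for every $\chi\in Irr(S\mid N)$. Then, using the split $Irr(S)=Irr(S/N)\sqcup Irr(S\mid N)$, every $S$-character not vanishing at $g$ lies in $Irr(S/N)$ and hence satisfies $\chi(g)=\chi(1)$ because $N\leq\ker\chi$. Taking $h=1$ in Theorem~\ref{corth} and using $Cl_S(1)=\{1\}$, so that $1\notin Cl_S(g)$, gives
\begin{align*}
0=\sum_{\chi\in Irr(S)}\frac{\chi(g)\overline{\chi(1)}}{\chi(1)}=\sum_{\chi\in Irr(S)}\chi(g)=\sum_{\chi\in Irr(S/N)}\chi(1),
\end{align*}
where the middle equality uses that $\chi(1)$ is a positive real and the last uses the two constant/vanishing observations above. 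But $\sum_{\chi\in Irr(S/N)}\chi(1)$ is a sum of strictly positive reals, since each $\chi(1)=\sigma_X(1)=\sum_{\psi\in X}\psi(1)^2>0$, and is therefore nonzero, a contradiction. Hence some $\chi\in Irr(S\mid N)$ is nonzero at $g$; together with the trivial case $g=1$, this establishes the second half of (2) and closes the chain of equivalences among (1), (2), and (3).
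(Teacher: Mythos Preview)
The paper does not supply its own proof of this statement: it is quoted verbatim as \cite[Theorem 6.1]{Burkett2020Vanishing-offProducts} in the Preliminaries and used as a black box thereafter. So there is no in-paper argument to compare against.

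That said, your derivation is correct and self-contained relative to the tools already assembled in the paper. The reduction to Theorem~\ref{burvanish} with $M=N$ together with Lemma~\ref{nvsn} cleanly handles the equivalence of (1), (3), and the first half of (2). Your treatment of the second half of (2) via column orthogonality (Theorem~\ref{corth}) with $h=1$ is a nice touch: it shows this assertion holds for \emph{every} $S$-normal $N\neq 1$, not merely under hypothesis (1), which is exactly what you need to close the loop. One small quibble: you dismiss the case $N=1$ as ``trivial'', but in fact when $N=1$ the set $\mathrm{Irr}(S\mid N)$ is empty, so the existential clause in (2) fails at $g=1$ while (1) and (3) hold vacuously; this is a wrinkle in the theorem statement itself rather than in your argument, but it would be cleaner to flag it as a boundary anomaly rather than call it trivial.
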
    
                    \section{$S$-GCP}
   This section is dedicated to the study of $S$-GCP. We begin our investigation with the generalization of  \cite[Lemma 2.1]{lewis2009vanishing} that characterizes $S$-Camina elements. 
	\begin{lemma}\label{celt}
		Let $G$ be a group with supercharacter theory $S$. Then the following are equivalent.
		\begin{enumerate}
			\item  $g$ is an $S$- Camina element.
			\item $Cl_S(g)=g[G,S]$
			\item $\abs{Cl_S(g)}=\abs{[G,S]}$
			\item For any $z\in [G,S]$, there exists a $y\in Cl_S(g)$ such that $g^{-1}y=z$
		\end{enumerate}
	\end{lemma}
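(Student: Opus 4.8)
The plan is to handle the three combinatorial conditions (2), (3), (4) by an elementary argument, and then to connect condition (1) to (3) through the column orthogonality relation of Theorem~\ref{corth}.

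First I would record the containment $Cl_S(g)\subseteq g[G,S]$, which holds for every $g\in G$: if $y\in Cl_S(g)$, then $g^{-1}y$ is one of the defining generators of $[G,S]$, so $y=g(g^{-1}y)\in g[G,S]$. In particular $|Cl_S(g)|\le|g[G,S]|=|[G,S]|$, with equality if and only if $Cl_S(g)=g[G,S]$; this gives (2)$\Leftrightarrow$(3). Moreover, condition (4) asserts exactly that $g[G,S]\subseteq Cl_S(g)$, which combined with the containment above is equivalent to $Cl_S(g)=g[G,S]$; hence (2)$\Leftrightarrow$(4).

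It then remains to prove (1)$\Leftrightarrow$(3). Here I would use that $\mathrm{Irr}(S)$ is the disjoint union of $\mathrm{Irr}(S/[G,S])$ and $\mathrm{Irr}(S\mid[G,S])$, and that $\mathrm{Irr}(S/[G,S])$ coincides with $\mathrm{Irr}(G/[G,S])$. Since every $S$-class is a union of ordinary conjugacy classes, $[g,x]=g^{-1}(x^{-1}gx)\in[G,S]$ for all $g,x\in G$, so $[G,G]\le[G,S]$ and $G/[G,S]$ is abelian; thus $\mathrm{Irr}(S/[G,S])$ consists of exactly $|G:[G,S]|$ linear characters. Applying Theorem~\ref{corth} with $h=g$ gives
\[
\sum_{\chi\in \mathrm{Irr}(S)}\frac{|\chi(g)|^2}{\chi(1)}=\frac{|G|}{|Cl_S(g)|}.
\]
Splitting the left side along the two families and using $|\lambda(g)|=\lambda(1)=1$ for every linear $\lambda$, the part coming from $\mathrm{Irr}(S/[G,S])$ equals $|G:[G,S]|=|G|/|[G,S]|$, so
\[
\sum_{\chi\in \mathrm{Irr}(S\mid[G,S])}\frac{|\chi(g)|^2}{\chi(1)}=\frac{|G|}{|Cl_S(g)|}-\frac{|G|}{|[G,S]|}.
\]
The right side is nonnegative by the containment $Cl_S(g)\subseteq g[G,S]$, and it vanishes precisely when $|Cl_S(g)|=|[G,S]|$; the left side is a sum of nonnegative real terms, hence vanishes precisely when $\chi(g)=0$ for every $\chi\in\mathrm{Irr}(S\mid[G,S])$, i.e.\ precisely when $g$ is an $S$-Camina element. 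This establishes (1)$\Leftrightarrow$(3) and closes the chain.

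The computation is routine once set up; the only place that calls for care is the bookkeeping in the split sum — verifying that the linear part evaluates to $|G:[G,S]|$ (equivalently, that column orthogonality applied to the $S^{G/[G,S]}$-abelian quotient $G/[G,S]$ returns this value) and that each remaining summand $|\chi(g)|^2/\chi(1)$ is a genuine nonnegative real so that the "sum is zero iff each term is zero" step is legitimate. I do not anticipate a more serious obstacle than this.
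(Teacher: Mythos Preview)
Your proof is correct and follows essentially the same approach as the paper: the containment $Cl_S(g)\subseteq g[G,S]$ handles the equivalences among (2), (3), (4), and column orthogonality (Theorem~\ref{corth}) with the split $\mathrm{Irr}(S)=\mathrm{Irr}(S/[G,S])\sqcup\mathrm{Irr}(S\mid[G,S])$ gives (1)$\Leftrightarrow$(3). Your evaluation of the linear part as $|G:[G,S]|$ by a direct count is a mild simplification of the paper's route, which instead reapplies column orthogonality on the quotient $G/[G,S]$ and invokes \cite[Proposition 3.11]{Burkett2020AnTheory} to see that the relevant superclass there is a singleton.
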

	\begin{proof}
     Let us assume (1). Then, by Theorem \ref{corth}, we have
	\begin{align}\label{eqn_cls}
		\frac{\abs{G}}{\abs{Cl_S(g)}} =& \sum_{\chi \in Irr(S/[G,S])} \frac{|\chi(g)|^2 }{\chi(1)}\notag
		+ \sum_{\chi \in Irr(S \mid [G,S])} \frac{|\chi(g)|^2 }{\chi(1)}\\\notag
        =& \sum_{\chi \in Irr(S/[G,S])} \frac{|\chi(g)|^2 }{\chi(1)} \\\notag
        =& \sum_{\chi \in Irr(G/[G,S])}\frac{|\Tilde{\chi}(g[G,S])|^2 }{\Tilde{\chi}([G,S])} \\
        =&\frac{\abs{G/[G,S]}}{\abs{Cl_{S^{[G/G,S]}}(g[G,S])}}
	\end{align}
    Here, we employed Theorem \ref{corth} for the group $G/[G,S]$ to achieve the last equality. Also, \cite[Proposition 3.11]{Burkett2020AnTheory}) yields $\abs{Cl_{S^{G/[G,S]}}\left(g[G,S]\right)}=1$, thereby proving (3).  
	 Conversely assuming (3), from Theorem \ref{corth}, we obtain $$\displaystyle\sum_{\chi\in Irr(S|[G,S])}\frac{\abs{\chi(g)}^2}{\chi(1)}=0.$$ Since each term in this sum is non-negative, we obtain $\chi(g)=0$ for all $\chi\in$ Irr($S|[G,S]$). This proves (1).
\par Since $G/[G,S]$ is $S^{G/[G,S]}$-abelian, we have $\pi(Cl_S(g))=Cl_{S^{G/[G,S]}}(g[G,S])=\left\{g[G,S]\right\}$. Therefore, we achieve $\pi(x)=x[G,S]=g[G,S]$, for all $x\in Cl_S(g)$. Thus, we must have $g^{-1}x=z$ for some $z\in[G,S]$. This proves $Cl_S(g)\subseteq g[G,S].$ Equivalence of (2) and (3) directly follows from here. 
	\par Finally, assuming (4), we get for any $gz\in g[G,S]$, a $y\in Cl_S(g)$ such that $gz=y\in Cl_S(g)$. Combining this with $Cl_S(g)\subseteq g[G,S]$ proves (2). (2) implies (4) is straightforward. 
	
	\end{proof}
		
  

   \textbf{Proof of Theorem \ref{corgcp}:}  The proof follows directly from Lemma \ref{celt} and Theorem \ref{burvanish}. 
   	\hfill $\Box$\\\\

The following properties are significant in the study of $V(S)$.
    \begin{lemma}\label{cp}
    Let $(G,N)$ be an $S$-GCP. Then the following are true:
		\begin{enumerate}
			\item $[G,S]\leq N$.
			\item For $N\leq M < G$, $(G,M)$ is a $S$-GCP.
			\item If $K$ and $N$ are $S$ normal subgroups such that $K\leq N$, then $(\frac{G}{K}, \frac{N}{K})$ is $S^{G/K}$-GCP.
			\item If $G$ is not $S$-abelian, then $Z(S)\leq N$. 
		\end{enumerate}
	\end{lemma}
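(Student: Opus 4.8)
The plan is to deduce each item from the characterization of $S$-GCP provided by Theorem \ref{corgcp} together with the properties of $V(S \mid N)$ and $\Delta$-products recalled in the Preliminaries. For item (1), note that $(G,N)$ being an $S$-GCP means every $\chi \in \mathrm{Irr}(S \mid [G,S])$ vanishes on $G \setminus N$; by the definition of $V(S) = V(S \mid [G,S])$ as the smallest subgroup off which all such characters vanish, this immediately gives $V(S) \leq N$. Since $[G,S] \leq V(S)$ by Lemma \ref{nvsn} (applied with $N$ replaced by the $S$-normal subgroup $[G,S]$), we conclude $[G,S] \leq N$. Item (4) is then a corollary: by Lemma \ref{celt}, an $S$-Camina element $g$ satisfies $|Cl_S(g)| = |[G,S]|$, so if $G$ is not $S$-abelian then $[G,S] \neq 1$, hence every element of $G \setminus N$ has a nontrivial $S$-class; thus $Z(S) = \{g : |Cl_S(g)| = 1\} \subseteq N$. (Alternatively, $Z(S) \leq [G,S] \leq N$ would need $[G,S]$ to contain the center, which is not generally true, so the class-size argument is the right one.)

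For item (2), I would use equivalence (3) of Theorem \ref{corgcp}: $(G,N)$ is an $S$-GCP iff $|Cl_S(g)| = |[G,S]|$ for all $g \in G \setminus N$. If $N \leq M < G$, then $G \setminus M \subseteq G \setminus N$, so the same class-size equality holds for all $g \in G \setminus M$; hence $(G,M)$ is an $S$-GCP, provided $M$ is $S$-normal — which must be part of the hypothesis (the statement should read "$S$-normal $M$ with $N \leq M < G$"), and I will point this out. One should also check $N \leq [G,S]$ is not needed here since equivalence (3) is stated without that restriction.

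For item (3), I would work with the deflated theory $S^{G/K}$. First, $N/K$ is $S^{G/K}$-normal by Lemma \ref{lemma1}. The key computation is to identify $[G/K, S^{G/K}]$ with $[G,S]K/K$; this should follow from Lemma \ref{comm} together with the fact that $\mathrm{Irr}(S^{G/K}/[G/K,S^{G/K}])$ corresponds to the linear characters factoring through $G/[G,S]K$. Given $g \in G \setminus N$, its image $gK \in (G/K) \setminus (N/K)$, and I would show $|Cl_{S^{G/K}}(gK)| = |[G/K, S^{G/K}]|$ by relating $S$-classes in $G$ to $S^{G/K}$-classes via $\pi$: since $[G,S] \leq N$ by item (1), and $K \leq N$, the relevant class in $G/K$ is a full coset of $[G,S]K/K$ exactly when $Cl_S(g) = g[G,S]$, which holds by Lemma \ref{celt}. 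Then equivalence (3) of Theorem \ref{corgcp} for the group $G/K$ gives that $(G/K, N/K)$ is an $S^{G/K}$-GCP.

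The main obstacle I anticipate is item (3): carefully verifying that $[G/K, S^{G/K}] = [G,S]K/K$ and that the class-size identity transfers cleanly through deflation. One subtlety is that the correct ambient "commutator" for $S^{G/K}$ could a priori be larger than $[G,S]K/K$, so I would want to invoke the characterization in Lemma \ref{comm} rather than manipulate generators directly. Once that identification is in place, the rest is a direct application of the equivalences already established, and items (1), (2), (4) are short.
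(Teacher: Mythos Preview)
Your proposal is correct and, for items (1), (2), and (4), it matches the paper's argument essentially verbatim: (1) via $[G,S]\leq V(S)\leq N$, (2) via the containment $G\setminus M\subseteq G\setminus N$, and (4) via the class-size contradiction $1=|Cl_S(g)|=|[G,S]|$ for $g\in Z(S)\setminus N$. Your observation that $M$ ought to be assumed $S$-normal in (2) is a fair reading of the definition of $S$-GCP.

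For item (3) your route differs from the paper's. You push \emph{classes down} through the deflation: since $g\notin N$ is an $S$-Camina element, $Cl_S(g)=g[G,S]$, hence $Cl_{S^{G/K}}(gK)=\pi(Cl_S(g))=(gK)\,[G,S]K/K$, and once you know $[G/K,S^{G/K}]=[G,S]K/K$ (this is $\gamma_2(S^{G/K})=\gamma_2(S)K/K$, a standard fact from Burkett's work and used elsewhere in the paper), Lemma~\ref{celt} gives that $gK$ is $S^{G/K}$-Camina. The paper instead pulls \emph{characters up}: any $\tilde\chi\in\mathrm{Irr}(S^{G/K}\mid[G/K,S^{G/K}])$ inflates to some $\chi\in\mathrm{Irr}(S\mid[G,S])\cap\mathrm{Irr}(S/K)$, and then $\tilde\chi(gK)=\chi(g)=0$ for $g\notin N$ by the $S$-GCP hypothesis on $G$. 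Both approaches implicitly rest on the same commutator identification; the paper's is marginally quicker because the inflation of characters is canonical and one-line, whereas your class argument requires noting that $\pi(Cl_S(g))$ really is the full $S^{G/K}$-class of $gK$ (true by the definition of deflation). Either way the obstacle you flagged is genuine but easily dispatched.
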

	\begin{proof}  For any group $G$ with supercharacter theory $S$, Lemma \ref{nvsn} guarantees $[G,S]\leq V(S)$. Also, definition of $V(S)$ confirms $V(S)\leq N$ and hence (1) follows. Now, we note that  $N\leq M < G$ implies $G\setminus M\leq G\setminus N$, which proves (2).
			\par  For (3), we first note that Lemma \ref{lemma1} implies
            $$\frac{N}{K}\triangle_{S^{\frac{G}{K}}}\frac{G}{K}$$
             So, let $Kg\in \frac{G}{K}\setminus\frac{N}{K}$. Then, for any $\chi\in$ Irr($S|[G,S]$), we have $\chi(g)=0,$ as  $g\notin N$.
			Now, since supercharacters of $G/K$ is identified with supercharacters of $G$ containing $K$ in their kernels. Then,
			for any $\chi\in$ Irr($S|[G,S]$) $\cap$ Irr($S/K$), we have from 
			\begin{align*}
				\Tilde{\chi}(Kg)=\chi(g)=0.
			\end{align*}
			This proves (3).
			\par Finally, for (4) let $g\in Z(S)\setminus N$. Then, 
				$\chi(g)=0$ for all $\chi\in$ Irr($S|[G,S]$). By equation \eqref{eqn_cls}, we have
				\begin{align*}	
		\frac{\abs{G}}{\abs{Cl_S(g)}} =\frac{\abs{G/[G,S]}}{\abs{Cl_{S^{G/[G,S]}}(g[G,S])}}
	\end{align*}
				Since, $g\in Z(S)$, we have $\abs{Cl_S(g)}=1$. Thus, $\abs{[G,S]}=1$,  which is a contradiction as $G$ is not $S$-abelian.
	\end{proof}
   We end this section with the case of $S$-abelian groups.
  \begin{lemma}
      $G$ is $S$-abelian if and only if $(G,1)$ is an $S$-GCP. 
  \end{lemma}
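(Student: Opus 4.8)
The plan is to reduce both implications to Lemma~\ref{celt}, together with the elementary observation that $G$ is $S$-abelian if and only if $[G,S]=1$. First I would record this observation: if $Z(S)=G$, then $\mathrm{Cl}_S(g)=\{g\}$ for every $g\in G$, so every generator $g^{-1}k$ of $[G,S]$ equals $1$ and hence $[G,S]=1$; conversely, if $[G,S]=1$, then Lemma~\ref{celt} forces $\abs{\mathrm{Cl}_S(g)}=\abs{[G,S]}=1$ for every $g\in G\setminus\{1\}$ (and trivially for $g=1$), so $Z(S)=G$.

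For the forward direction, assume $G$ is $S$-abelian. By the observation $[G,S]=1$, so for each $g\in G\setminus\{1\}$ we have $\abs{\mathrm{Cl}_S(g)}=1=\abs{[G,S]}$, and Lemma~\ref{celt} shows that $g$ is an $S$-Camina element. Since $\{1\}$ is a superclass by the first supercharacter axiom, it is $S$-normal, and therefore $(G,1)$ is an $S$-GCP.

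For the converse, assume $(G,1)$ is an $S$-GCP. Applying Lemma~\ref{cp}(1) with $N=\{1\}$ gives $[G,S]\leq\{1\}$, i.e.\ $[G,S]=1$; note that part~(1) of Lemma~\ref{cp} holds with no non-$S$-abelian hypothesis, since its proof only uses $[G,S]\leq V(S)\leq N$. The observation of the first paragraph then yields $Z(S)=G$, so $G$ is $S$-abelian.

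There is essentially no substantial obstacle here; the only points needing a little care are that the trivial subgroup is always $S$-normal (so $(G,1)$ is a legitimate pair), and that $[G,S]\leq N$ in Lemma~\ref{cp} does not rely on $G$ being non-$S$-abelian. Alternatively, one could bypass Lemma~\ref{cp} in the converse: since every $\chi\in\mathrm{Irr}(S\mid[G,S])$ vanishes on $G\setminus\{1\}$, the minimality in the definition of $V(S)=V(S\mid[G,S])$ forces $V(S)=1$, and then $[G,S]\leq V(S\mid[G,S])=1$ by Lemma~\ref{nvsn} (using that $[G,S]$ is $S$-normal), after which Lemma~\ref{celt} again gives $\abs{\mathrm{Cl}_S(g)}=1$ for all $g$.
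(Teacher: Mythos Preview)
Your proof is correct and follows essentially the same approach as the paper: for the forward direction you use the size criterion $\abs{\mathrm{Cl}_S(g)}=\abs{[G,S]}$ (the paper cites Theorem~\ref{corgcp}(3), you cite Lemma~\ref{celt} directly, which amounts to the same thing), and for the converse you invoke Lemma~\ref{cp}(1) to obtain $[G,S]=1$. Your argument is in fact a bit more careful than the paper's, since you explicitly justify the equivalence $Z(S)=G\iff [G,S]=1$ and note the $S$-normality of $\{1\}$, points the paper leaves implicit.
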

  \begin{proof} If $(G,1)$ is an $S$-GCP, then by Lemma \ref{cp}, we have $[G,S]=1$, which implies  $G$ is $S$-abelian. Conversely, for an $S$-abelian group, we have $\abs{Cl_S(g)}=\abs{[G,S]}=1$ for all $g\neq 1$. By corollary \ref{corgcp}, this completes the proof.
  \end{proof}
    \section{The subgroup $V(S)$}
     Our goal in this section is the study of $V(S)$. We begin with a list of basic properties.
\begin{lemma}\label{vs}
		Let $S$ be a supercharacter theory of $G$. Then the following are true
		\begin{enumerate}
			\item $(G, V(S))$ is a $S$-GCP.
			\item $[G,S]\leq V(S)$, 
			\item If $Z(S)\neq G$, then $Z(S)\leq V(S)$.
			\item If $(G,N)$ is $S$-GCP, then, $V(S)\leq N$.
			\item $V(S)=\bigcap_{N\in \mathcal{N}(S)}N$, where $\mathcal{N}(S)=\left\{N\leq G: (G,N) \text{ is an }  S\text{-GCP}\right\}$.
		\end{enumerate}
	\end{lemma}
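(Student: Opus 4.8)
The plan is to deduce all five items from the defining property of $V(S)=V(S\mid[G,S])$ as the smallest subgroup $V\le G$ off of which every character in $\mathrm{Irr}(S\mid[G,S])$ vanishes, together with Theorem~\ref{corgcp} and Lemmas~\ref{nvsn} and~\ref{cp}. Before starting I would record the two $S$-normality facts the argument uses implicitly: first, $[G,S]\triangle_S G$, since by Lemma~\ref{comm} it is an intersection of kernels of $S$-characters and each such kernel is $S$-normal; second, $V(S)=V(S\mid[G,S])\triangle_S G$ by \cite[Proposition~4.2]{Burkett2020Vanishing-offProducts}. With these in place, item~(1) is immediate: by construction every $\chi\in\mathrm{Irr}(S\mid[G,S])$ vanishes on $G\setminus V(S)$, which is precisely condition~(5) of Theorem~\ref{corgcp} applied to the $S$-normal subgroup $N=V(S)$; hence $(G,V(S))$ is an $S$-GCP.

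For item~(2) I would invoke Lemma~\ref{nvsn} with $N=[G,S]$, which gives $[G,S]\le V(S\mid[G,S])=V(S)$. Item~(4) is the \emph{minimality} half of the definition read backwards: if $(G,N)$ is an $S$-GCP, then by the equivalence (1)$\Leftrightarrow$(5) of Theorem~\ref{corgcp} every character in $\mathrm{Irr}(S\mid[G,S])$ vanishes on $G\setminus N$, and since $V(S)$ is the smallest subgroup with this property we get $V(S)\le N$. Item~(3) then follows by feeding item~(1) into Lemma~\ref{cp}(4): $(G,V(S))$ is an $S$-GCP, and $Z(S)\ne G$ says $G$ is not $S$-abelian, whence $Z(S)\le V(S)$.

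Finally, item~(5) is a purely formal consequence of (1) and (4): item~(1) shows $V(S)\in\mathcal{N}(S)$, so $\bigcap_{N\in\mathcal{N}(S)}N\le V(S)$, while item~(4) shows $V(S)\le N$ for every $N\in\mathcal{N}(S)$, so $V(S)\le\bigcap_{N\in\mathcal{N}(S)}N$; equality follows, and $\mathcal{N}(S)\ne\emptyset$ by~(1) so the intersection is well defined. I do not expect a genuine obstacle; the only care needed is logical order, since (3), (4), (5) all rest on (1) and (1) in turn needs $V(S)\triangle_S G$ so that Theorem~\ref{corgcp} applies. Hence I would present the proof in the order (1), (2), (4), (3), (5) rather than as listed.
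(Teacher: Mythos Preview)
Your proposal is correct and follows essentially the same approach as the paper's proof: each item is deduced from the defining minimality of $V(S)$, Lemma~\ref{nvsn}, and Lemma~\ref{cp}(4), with (5) obtained from (1) and (4). The only difference is that you are more explicit than the paper about the $S$-normality of $[G,S]$ and $V(S)$ needed to invoke Theorem~\ref{corgcp}, and you spell out the logical dependencies among the items; the paper's proof leaves these implicit.
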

	\begin{proof}
		\begin{enumerate}
			\item follows from definition.
			\item follows from Lemma \ref{nvsn}.
			\item As $(G, V(S))$ is a $S$-GCP,  the result immediately follows from Lemma \ref{cp}(4). 
			\item holds because $V(S)$ is the smallest subgroup of $G$ outside of which all $\chi\in Irr(S|[G,S])$ vanish, we have $V(S)\leq N$, for all $N\in \mathcal{N}$
			\item  Since, $V(S)$ is the smallest subgroup of $G$ outside of which all $\chi\in Irr(S|[G,S])$ vanish, we have $V(S)\leq N$, for all $N\in \mathcal{N}$.
	Also, since $(G, V(S))$ is an $S$-GCP, we have $\cap_{N\in \mathcal{N}(S)}N\subseteq V(S)$.
		\end{enumerate}
	\end{proof}
   Our next result relates the upper $S$-central series with $V(S)$.
\begin{theorem}\label{zeta}
	If $[G,S]\nleq \zeta_m(S)$ for some $m\geq1$, then $\zeta_{m+1}(S)\leq V(S)$  
\end{theorem}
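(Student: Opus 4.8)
To establish Theorem~\ref{zeta}, the plan is to argue by contradiction, exploiting the fact that an element sitting both outside $V(S)$ and inside $\zeta_{m+1}(S)$ is forced to have an $S$-class that is simultaneously a full coset of $[G,S]$ and contained in a coset of $\zeta_m(S)$; this pins $[G,S]$ inside $\zeta_m(S)$ and contradicts the hypothesis. So I would suppose $\zeta_{m+1}(S)\nleq V(S)$ and fix $g\in\zeta_{m+1}(S)\setminus V(S)$.

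The first ingredient is the behaviour of $g$ with respect to $V(S)$. Since $V(S)=V(S\mid[G,S])$ is, by definition, the smallest subgroup outside of which every character in $\mathrm{Irr}(S\mid[G,S])$ vanishes, from $g\notin V(S)$ we get $\chi(g)=0$ for all $\chi\in\mathrm{Irr}(S\mid[G,S])$; that is, $g$ is an $S$-Camina element. Lemma~\ref{celt} (the equivalence of (1) and (2)) then yields $Cl_S(g)=g[G,S]$.

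The second ingredient is the behaviour of $g$ with respect to $\zeta_m(S)$. Each term of the upper $S$-central series is $S$-normal (inductively, $Z(S^{G/\zeta_{i-1}(S)})$ is a union of singleton superclasses, hence $S^{G/\zeta_{i-1}(S)}$-normal, and its preimage $\zeta_i(S)$ is $S$-normal by Lemma~\ref{lemma1}), so the deflation $S^{G/\zeta_m(S)}$ is defined. From $g\in\zeta_{m+1}(S)$ we get $g\zeta_m(S)\in\zeta_{m+1}(S)/\zeta_m(S)=Z(S^{G/\zeta_m(S)})$, so $Cl_{S^{G/\zeta_m(S)}}(g\zeta_m(S))=\{g\zeta_m(S)\}$. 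By the description of the superclasses of a deflation recalled in the Preliminaries, this superclass equals $\pi_m(Cl_S(g))$, where $\pi_m\colon G\to G/\zeta_m(S)$ is the canonical map; hence $\pi_m(Cl_S(g))=\{g\zeta_m(S)\}$, i.e. $Cl_S(g)\subseteq g\zeta_m(S)$.

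Putting the two together, $g[G,S]=Cl_S(g)\subseteq g\zeta_m(S)$ gives $[G,S]\subseteq\zeta_m(S)$, contradicting $[G,S]\nleq\zeta_m(S)$; therefore $\zeta_{m+1}(S)\leq V(S)$. I do not anticipate a genuine obstacle: the argument is short, and the only steps needing a little care are the two routine bookkeeping points in the previous paragraph — that $\zeta_m(S)$ is honestly $S$-normal so that $S^{G/\zeta_m(S)}$ makes sense, and that the canonical image of an $S$-class is exactly one superclass of the deflated theory — both of which follow directly from the facts collected in the Preliminaries. (If $G$ is $S$-abelian the statement is vacuous, since then $[G,S]=1\leq\zeta_m(S)$.)
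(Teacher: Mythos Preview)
Your argument is correct, but it follows a different route from the paper's. The paper proceeds by induction on $m$: the base case is Lemma~\ref{vs}(3) (if $G$ is non $S$-abelian then $Z(S)\leq V(S)$), and the inductive step passes to the quotient $G/\zeta_{m-1}(S)$, using that $(G,V(S))$ is an $S$-GCP together with Lemma~\ref{cp}(3),(4) to push $Z(S^{G/\zeta_{m-1}})=\zeta_m/\zeta_{m-1}$ inside $V(S)/\zeta_{m-1}$. Your approach instead is a one-shot contradiction: an element $g\in\zeta_{m+1}(S)\setminus V(S)$ is $S$-Camina, so Lemma~\ref{celt} forces $Cl_S(g)=g[G,S]$, while membership in $\zeta_{m+1}(S)$ forces $Cl_S(g)\subseteq g\zeta_m(S)$ via the deflated theory; comparing these traps $[G,S]$ inside $\zeta_m(S)$. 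Your argument is shorter and avoids induction entirely by leaning on the explicit description of the $S$-class of a Camina element; the paper's proof, by contrast, stays within the $S$-GCP framework being developed and illustrates how Lemma~\ref{cp} propagates through the upper $S$-central series.
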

	\begin{proof}
		For $i=1$, we have $[G,S]\nleq {1} $ implies $\zeta_2(S)=Z(S)\leq V(S)$ due to Lemma \ref{vs}.
		Now, let the statement be true for $i=m-2$.
		Let $[G,S]\nleq \zeta_{m-1}(S)$. Then,  $[G,S]\nleq \zeta_{m-2}(S)$.
	So, by induction hypothesis, we obtain $\zeta_{m-1}(S)\leq V(S) $.
	Since, $(G,V(S))$ is $S$-GCP, by Lemma \ref{cp} we have $\left(\frac{G}{\zeta_{m-1}},\frac{V(S)}{\zeta_{m-1}}\right)$ is $S^{G/\zeta_{m-1}}$-GCP. Therefore, applying Lemma \ref{cp}(4), we achieve
	 \begin{align*}
	 	\frac{\zeta_m(S)}{\zeta_{m-1}(S)}=Z(S^{G/\zeta_{m-1}})\leq \frac{V(S)}{\zeta_{m-1}}.
	 \end{align*}
	Using Lemma \ref{lemma1}, we conclude the proof.
	\end{proof}
   Now, for an $S$-nilpotent group, if we define $S$-\emph{nilpotence class} of $G$ to be the least positive integer $c$ satisfying $\zeta_{c}(S) = G,$ or equivalently $\gamma_{c+1}(S) = 1,$ then the following result is almost immediate.
    \begin{corollary}
Let $G$ be $S$-nilpotent with $S$-nilpotence class $c$. Then $\zeta_{c-1} \leq V(S)$.
\end{corollary}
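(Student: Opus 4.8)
The plan is to deduce this as a direct consequence of Theorem~\ref{zeta} together with the definition of $S$-nilpotence class. Concretely, I would argue as follows. Since $G$ has $S$-nilpotence class $c$, by definition $\gamma_{c+1}(S) = 1$ while $\gamma_c(S) \neq 1$; equivalently $\zeta_c(S) = G$ while $\zeta_{c-1}(S) \neq G$. The goal is to produce an index $m \geq 1$ with $[G,S] \nleq \zeta_m(S)$ so that Theorem~\ref{zeta} gives $\zeta_{m+1}(S) \leq V(S)$, and then to check that the natural choice $m = c-2$ yields exactly $\zeta_{c-1}(S) \leq V(S)$.

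The key step is to verify that $[G,S] \nleq \zeta_{c-2}(S)$. I would show this by contradiction: if $[G,S] = \gamma_2(S) \leq \zeta_{c-2}(S)$, then one can push this up the lower central series and show $\gamma_c(S) \leq \zeta_1(S) = Z(S)$, which forces $\gamma_{c+1}(S) = [\gamma_c(S), S] = 1$ to already hold at an earlier stage, contradicting that the $S$-nilpotence class is exactly $c$. More precisely, the standard interplay between the lower and upper $S$-central series — namely $\gamma_{i+1}(S) \leq \zeta_{c-i}(S)$ whenever $\gamma_2(S) \leq \zeta_{c-2}(S)$, proved by induction on $i$ using that $[\zeta_j(S), S] \leq \zeta_{j-1}(S)$ — gives $\gamma_c(S) \leq \zeta_1(S)$ and hence $\gamma_c(S) \leq Z(S)$, so $\gamma_c(S)$ would be $S$-central and the class would be at most $c-1$. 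This is the contradiction. Therefore $[G,S] \nleq \zeta_{c-2}(S)$.

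With $m = c-2 \geq 1$ (noting that the case $c = 1$, i.e. $G$ is $S$-abelian, is trivial since then $\zeta_{c-1} = \zeta_0 = 1 \leq V(S)$, and the case $c = 2$ needs a direct check using $\zeta_1 = Z(S) \leq V(S)$ from Lemma~\ref{vs}(3), assuming $Z(S) \neq G$ which holds as $c \geq 2$), Theorem~\ref{zeta} immediately yields $\zeta_{c-1}(S) \leq V(S)$, completing the proof.

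I expect the main obstacle to be the bookkeeping around small values of $c$ and making the lower/upper $S$-central series interplay rigorous in the supercharacter setting — in particular confirming that $[\zeta_j(S), S] \leq \zeta_{j-1}(S)$ holds, which should follow from the definition $\zeta_j(S)/\zeta_{j-1}(S) = Z(S^{G/\zeta_{j-1}(S)})$ and the fact that elements of a supercharacter-theoretic center have trivial $S$-classes in the quotient, so their commutators with arbitrary elements lie in $\zeta_{j-1}(S)$. Everything else is a formal application of Theorem~\ref{zeta}.
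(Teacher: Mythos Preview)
Your overall strategy is exactly the paper's: establish $[G,S] \nleq \zeta_{c-2}(S)$ and then invoke Theorem~\ref{zeta} with $m = c-2$. The paper's proof is in fact a single line --- it simply asserts $[G,S] \nleq \zeta_{c-2}$ from the definition of $S$-nilpotence class and applies Theorem~\ref{zeta}. Your treatment of the boundary cases $c = 1, 2$ is actually more careful than the paper's.

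There is, however, an indexing slip in your contradiction argument. Under the hypothesis $\gamma_2(S) \leq \zeta_{c-2}(S)$, the correct inductive statement is $\gamma_j(S) \leq \zeta_{c-j}(S)$ for $j \geq 2$ (the base case $j=2$ is the hypothesis itself; the inductive step uses $[\zeta_k, S] \leq \zeta_{k-1}$). At $j = c$ this gives $\gamma_c(S) \leq \zeta_0(S) = 1$, contradicting that the class is exactly $c$. Your version, $\gamma_{i+1}(S) \leq \zeta_{c-i}(S)$, only yields $\gamma_c(S) \leq \zeta_1(S) = Z(S)$ at $i = c-1$, and that merely gives $\gamma_{c+1}(S) = 1$, i.e.\ class at most $c$ --- no contradiction. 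Indeed, $\gamma_{i+1} \leq \zeta_{c-i}$ holds for \emph{every} $S$-nilpotent group of class $c$ without any extra hypothesis, so that inequality alone cannot possibly produce a contradiction. A quicker route, in the spirit of the paper's one-liner: if $[G,S] \leq \zeta_{c-2}(S)$ then $[G/\zeta_{c-2}(S),\, S^{G/\zeta_{c-2}}]$ is trivial, so $G/\zeta_{c-2}(S)$ is $S^{G/\zeta_{c-2}}$-abelian, hence $\zeta_{c-1}(S) = G$, contradicting the minimality of $c$.
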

\begin{proof}
Since, $c$ is the $S$-nilpotent class, we have $[G,S] \nleq \zeta_{c-2}$. We complete the proof using Theorem \ref{zeta}.
\end{proof}
For a group $G$ that is not $S$-nilpotent,  we must have for some positive integer $n$,  $\zeta_n(S) = \zeta_m(S) \text{ for all } m \geq n.$ In that case, we define $\zeta_\infty(S) = \zeta_n(S)$ to be the $S$-\emph{hypercenter} of $G$. We show that if $G$ is not $S$-nilpotent, then $V(S)$ must contain $\zeta_\infty(S)$.

\begin{corollary}
Suppose $G$ is not $S$-nilpotent. Then $\zeta_\infty(S) \leq V(S)$.
\end{corollary}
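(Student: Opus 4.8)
The plan is to reduce everything to Theorem~\ref{zeta} by pinpointing where the upper $S$-central series stops growing. Let $n$ be the least nonnegative integer with $\zeta_n(S)=\zeta_{n+1}(S)$; as noted in the discussion preceding this corollary the series is then constant beyond $\zeta_n(S)$, so $\zeta_\infty(S)=\zeta_n(S)$. If $n=0$ then $\zeta_\infty(S)=1\leq V(S)$ and there is nothing to do, so I may assume $n\geq 1$, and by minimality of $n$ we have the strict inclusion $\zeta_{n-1}(S)\subsetneq\zeta_n(S)$.

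The heart of the argument is to show that $[G,S]\nleq\zeta_{n-1}(S)$. Suppose instead that $[G,S]\leq\zeta_{n-1}(S)$, and pass to $\overline G=G/\zeta_{n-1}(S)$ with its deflated supercharacter theory $\overline S=S^{G/\zeta_{n-1}(S)}$ (this makes sense since $\zeta_{n-1}(S)\triangle_S G$). For any $g\in G$ and any $k\in Cl_S(g)$ we have $g^{-1}k\in[G,S]\leq\zeta_{n-1}(S)$, hence $\pi(k)=\pi(g)$; since the superclasses of $\overline S$ are precisely the images $\pi(K)$ of the superclasses $K$ of $S$, this forces $Cl_{\overline S}(\pi(g))=\{\pi(g)\}$. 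As $g$ was arbitrary, $\overline G$ is $\overline S$-abelian, i.e.\ $Z(\overline S)=\overline G$. But $Z(\overline S)=\zeta_n(S)/\zeta_{n-1}(S)$ by the definition of the upper $S$-central series, so $\zeta_n(S)=G$, which means the upper $S$-central series reaches $G$ and $G$ is $S$-nilpotent --- contradicting the hypothesis. Therefore $[G,S]\nleq\zeta_{n-1}(S)$.

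Now the conclusion follows quickly. If $n\geq 2$, then $n-1\geq 1$ and Theorem~\ref{zeta} applied with $m=n-1$ gives $\zeta_n(S)\leq V(S)$, that is, $\zeta_\infty(S)\leq V(S)$. If $n=1$, then $[G,S]\nleq\zeta_0(S)=1$ shows $[G,S]\neq 1$, so $G$ is not $S$-abelian and $Z(S)\neq G$; Lemma~\ref{vs}(3) then gives $Z(S)\leq V(S)$, and since $\zeta_\infty(S)=\zeta_1(S)=Z(S)$ in this case we again obtain $\zeta_\infty(S)\leq V(S)$.

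I expect the only point requiring care --- and it is more bookkeeping than genuine difficulty --- to be the middle step: one must use that $\zeta_{n-1}(S)$ is $S$-normal so that the deflated theory $S^{G/\zeta_{n-1}(S)}$ exists, that its superclasses are the $\pi$-images of the superclasses of $S$, and that $Z(S^{G/\zeta_{n-1}(S)})$ is exactly $\zeta_n(S)/\zeta_{n-1}(S)$ rather than something larger. Granting these standard facts together with Theorem~\ref{zeta} and Lemma~\ref{vs}, the proof needs no further technical input.
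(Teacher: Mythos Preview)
Your proof is correct and follows essentially the same route as the paper's, reducing to Theorem~\ref{zeta} via the observation that $[G,S]$ cannot lie in the stabilized part of the upper $S$-central series when $G$ is not $S$-nilpotent. The paper's version is marginally slicker: it applies Theorem~\ref{zeta} with $m=n$ (using $\zeta_{n+1}(S)=\zeta_n(S)=\zeta_\infty(S)$) rather than with $m=n-1$, which avoids your separate treatment of the case $n=1$.
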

\begin{proof}
Let $n$ be the integer such that $\zeta_n(S) = \zeta_\infty(S)$. As $G$ is not $S$-nilpotent, we must have $[G,S] \nleq \zeta_n(S)$, and so Theorem \ref{zeta} implies $\zeta_\infty(S)=\zeta_{n+1}(S) \leq V(S)$. 
\end{proof}

    \begin{lemma}\label{vsn} 
	Let $N$ be an $S$-normal subgroup of $G$ such that $G/N$ is not $S^{G/N}$-abelian. Then, $N\leq V(S)$ and $V(S^{G/N})\leq V(S)/N$. 
\end{lemma}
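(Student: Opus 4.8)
The plan is to establish the two inclusions separately, the inclusion $N\le V(S)$ being the substantive one and the inclusion $V(S^{G/N})\le V(S)/N$ following from it by routine bookkeeping with deflation. For $N\le V(S)$, I would argue by contradiction. Suppose some $g\in N$ satisfies $g\notin V(S)$. By Lemma \ref{vs}(1) the pair $(G,V(S))$ is an $S$-GCP, so every element of $G\setminus V(S)$ is an $S$-Camina element; in particular $g$ is, and Lemma \ref{celt} then yields $Cl_S(g)=g[G,S]$. On the other hand $N$ is $S$-normal, hence a union of $S$-classes, so $g\in N$ forces $Cl_S(g)\subseteq N$; combining the two, $g[G,S]\subseteq N$, and cancelling $g\in N$ gives $[G,S]\le N$. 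The final link is the observation that $[G,S]\le N$ forces $G/N$ to be $S^{G/N}$-abelian: since $[G,S]=\langle g^{-1}k:g\in G,\ k\in Cl_S(g)\rangle$, the inclusion $[G,S]\le N$ means precisely that each $S$-class of $G$ is contained in a single coset of $N$, so each superclass $\pi(Cl_S(g))$ of $S^{G/N}$ is a singleton and $Z(S^{G/N})=G/N$. This contradicts the hypothesis that $G/N$ is not $S^{G/N}$-abelian, so no such $g$ exists and $N\le V(S)$.

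With $N\le V(S)$ in hand, Lemma \ref{lemma1} shows $V(S)/N$ is $S^{G/N}$-normal, so the comparison with $V(S^{G/N})$ is meaningful. I would use the standard identification of the supercharacters of $G/N$ with the characters in $\mathrm{Irr}(S/N)$: a supercharacter $\tilde\chi$ of $G/N$ corresponds to a $\chi\in\mathrm{Irr}(S)$ with $N\le\ker(\chi)$, satisfies $\tilde\chi(gN)=\chi(g)$, and, degrees being preserved by this correspondence, $\tilde\chi$ is nonlinear exactly when $\chi\in\mathrm{Irr}(S\mid[G,S])$. Now $V(S^{G/N})$ is generated by the cosets $gN$ on which some nonlinear supercharacter $\tilde\chi$ of $G/N$ does not vanish; for any such $g$, the corresponding $\chi\in\mathrm{Irr}(S\mid[G,S])$ has $\chi(g)=\tilde\chi(gN)\neq0$, whence $g\in V(S)$ by the defining property of $V(S)=V(S\mid[G,S])$, so $gN\in V(S)/N$. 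Since $V(S)/N$ is a subgroup containing all the generators of $V(S^{G/N})$, we conclude $V(S^{G/N})\le V(S)/N$.

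I expect no genuinely hard step here; the argument is short once one spots that elements of $G\setminus V(S)$ are $S$-Camina and that $S$-normality of $N$ pins down their $S$-classes inside $N$. The one place that needs a moment's care is the closing implication of the first paragraph — translating $[G,S]\le N$ into $S^{G/N}$-abelianness of $G/N$ — which rests on the explicit generating set of $[G,S]$ together with the description $\mathrm{Cl}(S^{G/N})=\{\pi(K):K\in\mathrm{Cl}(S)\}$ recalled in the preliminaries.
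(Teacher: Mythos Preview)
Your argument is correct, but it differs from the paper's in both halves.

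For $N\le V(S)$, the paper argues directly: since $G/N$ is not $S^{G/N}$-abelian there is a nonlinear supercharacter $\tilde\chi$ of $G/N$, which inflates to some $\chi\in\mathrm{Irr}(S\mid[G,S])$ with $N\le\ker(\chi)\le V(\chi)\le V(S)$. You instead argue by contradiction through $S$-Camina elements, deducing $[G,S]\le N$ and hence $S^{G/N}$-abelianness. Both work; the paper's route is a one-liner, while yours makes explicit use of Lemma~\ref{celt} and the $S$-normality of $N$.

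For $V(S^{G/N})\le V(S)/N$, the paper uses the GCP machinery already in place: $(G,V(S))$ is an $S$-GCP by Lemma~\ref{vs}(1), so Lemma~\ref{cp}(3) gives that $(G/N,\,V(S)/N)$ is an $S^{G/N}$-GCP, and Lemma~\ref{vs}(4) then forces $V(S^{G/N})\le V(S)/N$. You instead unwind the definition of $V(S^{G/N})$ via the inflation/deflation correspondence of supercharacters, which is exactly the device the paper used for the \emph{first} inclusion. So the two proofs are in some sense mirror images: the paper uses the character correspondence for $N\le V(S)$ and the GCP framework for the quotient inequality, while you do the opposite. The paper's version for part two is slightly cleaner since it avoids re-verifying that nonlinearity is preserved under inflation, but your direct argument is perfectly valid and arguably more self-contained.
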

\begin{proof}
	Since $G/N$ is not $S^{G/N}$-abelian, we always find a non-linear supercharacter $\Tilde{\chi}$ of $S^{G/N}$. Also, the supercharacters $\Tilde{\chi}$ of $S^{G/N}$ is identified with supercharacters $\chi$ of $S$ whose kernel contains $N$, we have
	\begin{align*}
		N\leq \text{ker}(\chi)\leq V(\chi)\leq V(S).
	\end{align*} 
Combining this with  Lemma \ref{vs}(1), Lemma \ref{cp}(3) and Lemma \ref{vs}(4),  we  complete the proof.
\end{proof}
	
    Using this, we define a series $V_i(S)$ as $V_i(S)=[V_{i-1}(S),S]$ for all $i\geq 2$ and ${V_1}(S)={V}(S),$ which is a supercharacter analog of $V_i(G)$  defined in \cite[p.~1319]{lewis2009vanishing}. When the supercharacter theory is clear, we simply write $V_i$ instead of $V_i(S)$. In the following, we mostly mimic the proof of \cite[Lemma 4.1]{lewis2009vanishing} to establish some fundamental properties of the series $V_i(S)$.  
	\begin{theorem}
	Let $G$ be a group with supercharacter theory $S$. Then the following are true:
	\begin{enumerate}
		\item For all $i\geq 1$, we have
		     \begin{align*}
		     	\gamma_{i+1}\leq V_i\leq \gamma_{i}
		     \end{align*}
		     \item If $V_n< \gamma_{n}$ for some $n$, then 
		     \begin{align*}
		     	V_i< \gamma_{i}
		     \end{align*} for all $i$ with $1\leq i\leq n$.
		     \item If $V_n< \gamma_{n}$, then  $G/V_n$ is $S^{G/V_n}$-nilpotent with $S^{G/V_n}$-nilpotence class $n$ with 
		     \begin{align*}
		     	V_i(S^{G/V_n})=V_i(S)/V_n(S)
		     \end{align*} for $1\leq i \leq n$
	\end{enumerate}
	\end{theorem}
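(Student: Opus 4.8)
The plan is to mirror the classical argument of Lewis \cite[Lemma 4.1]{lewis2009vanishing} while substituting the supercharacter-theoretic tools already developed in the excerpt, most importantly Lemma \ref{vs} (giving $[G,S]\leq V(S)\leq N$ for every $S$-GCP $(G,N)$), Lemma \ref{vsn} (behavior of $V(S)$ under deflation), and the basic $S$-nilpotence machinery ($\gamma_i$, $\zeta_i$, $S$-nilpotence class). First I would establish (1) by induction on $i$. The base case $i=1$ is exactly $[G,S]\leq V(S)\leq G=\gamma_1$, which is Lemma \ref{vs}(2) together with the trivial containment $\gamma_2=[G,S]\leq V(S)$ from Lemma \ref{vs}(2) again. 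For the inductive step, apply $[\,\cdot\,,S]$ to the chain $\gamma_{i+1}\leq V_i\leq\gamma_i$: since $H\mapsto[H,S]$ is monotone (if $H\leq K$ then every generator $g^{-1}k$ of $[H,S]$ with $g\in H,\ k\in\mathrm{Cl}_S(g)$ also lies in $[K,S]$, because $\mathrm{Cl}_S(g)\subseteq\mathrm{Cl}_S(g)$ is the same set), we get $\gamma_{i+2}=[\gamma_{i+1},S]\leq[V_i,S]=V_{i+1}\leq[\gamma_i,S]=\gamma_{i+1}$, closing the induction.

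Next I would prove (2) by downward induction, assuming $V_n<\gamma_n$. Suppose for contradiction that $V_j=\gamma_j$ for some $j$ with $1\leq j<n$ (take $j$ maximal with this property, so $V_{j+1}<\gamma_{j+1}$ by maximality, yet $V_{j+1}=[V_j,S]=[\gamma_j,S]=\gamma_{j+1}$, an immediate contradiction). Actually the cleaner route: if $V_j=\gamma_j$ then $V_i=\gamma_i$ for all $i\geq j$ by applying $[\,\cdot\,,S]$ repeatedly, contradicting $V_n<\gamma_n$; hence $V_j<\gamma_j$ for all $j\leq n$, which is (2). I would spell this out via the iterate $V_{j+k}=\gamma_{j+k}$ for all $k\geq 0$.

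For (3), write $\overline{G}=G/V_n$ and $\overline{S}=S^{G/V_n}$. The key identities are $\gamma_i(\overline{S})=\gamma_i(S)/V_n$ for $i\leq n$ — which follows because deflation commutes with forming $[\,\cdot\,,S]$ (this should be available from Burkett's work on $\gamma_i$ and deflation; if not stated verbatim in the excerpt, it is a routine consequence of the definition of $\mathrm{Cl}(S^{G/V_n})=\{\pi(K):K\in\mathrm{Cl}(S)\}$) — and, in particular, $\gamma_{n+1}(\overline{S})=\gamma_{n+1}(S)/V_n$. Since $V_n=[V_{n-1},S]\supseteq[\gamma_n,S]=\gamma_{n+1}$ is false in general; rather $V_n\geq\gamma_{n+1}$ by part (1), so $\gamma_{n+1}(\overline{S})=\gamma_{n+1}(S)/V_n=1$ in $\overline{G}$, giving $S^{G/V_n}$-nilpotence of class at most $n$; and $\gamma_n(\overline{S})=\gamma_n(S)/V_n\neq 1$ precisely because $V_n<\gamma_n$ by hypothesis, so the class is exactly $n$. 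For the formula $V_i(\overline{S})=V_i(S)/V_n(S)$ for $1\leq i\leq n$, I would induct downward from $i=n$ (where $V_n(\overline{S})=1=V_n/V_n$ is forced since $\overline{G}$ is $\overline{S}$-nilpotent of class $n$, so $V_n(\overline{S})\leq\gamma_{n+1}(\overline{S})=1$), using Lemma \ref{vsn}: applying $V(\cdot)$ to the deflation $G\to G/V_n$ gives $V_n\leq V(S)$ (true by part (1) since $V_n\leq\gamma_1=G$... more carefully, $V_n\leq V_1=V(S)$ from the monotonicity $V_i\geq V_{i+1}$ implied by $\gamma$-sandwiching) and $V(\overline{S})\leq V(S)/V_n=V_1(S)/V_n$, and then iterate $[\,\cdot\,,\overline{S}]$ using the deflation-commutes-with-bracket identity to descend the $V_i$.

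The main obstacle I anticipate is precisely the claim that deflation commutes with the operators $[\,\cdot\,,S]$ and $V(\cdot)$ in the strong form $\gamma_i(S^{G/V_n})=\gamma_i(S)/V_n$ and $V_i(S^{G/V_n})=V_i(S)/V_n$: the containment "$\supseteq$" direction (that the deflated bracket is no larger) is a direct unwinding of definitions, but "$\subseteq$" requires knowing that $V_n$ is $S$-normal (so that $S^{G/V_n}$ makes sense and $\pi(\mathrm{Cl}_S(g))=\mathrm{Cl}_{S^{G/V_n}}(gV_n)$) and that no extra collapsing happens — here one leans on $V_n$ being $S$-normal, which itself needs an inductive argument: $V_1=V(S)\lhd_S G$ by the cited \cite[Proposition 4.2]{Burkett2020Vanishing-offProducts}, and then $V_{i}=[V_{i-1},S]$ is $S$-normal by Burkett's results on commutator-type subgroups being $S$-normal. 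I would flag this $S$-normality of each $V_i$ as a lemma-level fact to verify first, since everything in (3) rests on it.
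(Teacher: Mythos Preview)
Your treatment of (1), (2), and the nilpotence assertion in (3) matches the paper essentially verbatim: induction via monotonicity of $H\mapsto[H,S]$ for (1); the contrapositive ``$V_j=\gamma_j$ forces $V_i=\gamma_i$ for all $i\geq j$'' for (2); and Burkett's identity $\gamma_i(S^{G/N})=\gamma_i(S)N/N$ (which the paper cites as \cite[Lemma~5.5]{Burkett2020AnTheory}) to compute the deflated lower central series and read off the class. The $S$-normality of each $V_i$ that you flag is indeed used implicitly and follows inductively from $V_1=V(S)\lhd_S G$ together with $[H,S]\lhd_S G$ for $H\lhd_S G$.

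For the formula $V_i(S^{G/V_n})=V_i(S)/V_n$ your plan has a misstep. The paper argues by \emph{upward} induction on $i$: the base case $i=1$ is taken from Lemma~\ref{vsn}, and the step is the single line
\[
V_i\bigl(S^{G/V_n}\bigr)=\bigl[V_{i-1}(S)/V_n,\ S^{G/V_n}\bigr]=[V_{i-1}(S),S]\,V_n/V_n=V_i(S)/V_n,
\]
using the bracket--deflation identity and $V_n\leq V_i$. The \emph{downward} induction you propose from $i=n$ cannot work: knowing $V_n(\overline S)=1=V_n/V_n$ gives no handle on $V_{n-1}(\overline S)$, since $H\mapsto[H,S]$ is not invertible and there is no mechanism to pass from $V_i$ back to $V_{i-1}$. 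The upward iteration you also sketch---start from Lemma~\ref{vsn} and repeatedly apply $[\,\cdot\,,\overline S]$---is precisely the paper's route, so discard the downward scheme and run that instead. Your observation that Lemma~\ref{vsn}, as stated, only yields the containment $V(\overline S)\leq V(S)/V_n$ is accurate; the paper simply invokes it for the full equality at $i=1$ without further comment, so you are not missing a hidden trick---the reverse inclusion at the base would need to be supplied separately.
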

	\begin{proof} 
		\begin{enumerate}
			\item We use induction on $i$. For $i=1$, we have from Lemma \ref{vs}
			\begin{align*}
				[G,S]=\gamma_2\leq V_1=V(S)\leq \gamma_1=G.
			\end{align*}
			Let the statement  be true for $i=m$. That is, $\gamma_{m+1}\leq V_m\leq \gamma_{m}.$ Therefore, the proof follows from $[\gamma_{m+1},S]\leq [V_m, S]\leq [\gamma_{m},S].$
			\item  Suppose $V_i=\gamma_i$ for some $i$, then by induction it is clear that $V_j=\gamma_j$ for all $j\geq i$. Thus,  $ V_n< \gamma_n$   for some $n$ must imply	$V_i< \gamma_{i}$   for all $1\leq i\leq n$, as $ V_i\leq \gamma_i$  by (1). 
			\item  We know from \cite[Lemma 5.5]{Burkett2020AnTheory} that if $N$ is $S$-normal, then $\gamma_i(S^{G/N}) = \gamma_i(S)N/N$ for each $i \geq 1.$ Using this, we obtain 
            \begin{equation}\label{eqn_vn}
                \gamma_i\left(S^{G/V_n}\right) = \gamma_i(S)V_n/V_n.
            \end{equation}  Note that, for $i=n+1$, we have $  \gamma_{n+1}\left(S^{G/V_n}\right) = V_n$ as $\gamma_{n+1}\leq V_n$. Thus, $G/V_n$ is $S^{G/V_n}$-nilpotent. Also, for any $i\leq n$, equation \eqref{eqn_vn} and $V_n(S)< \gamma_{i}(S)$ ensures that $\gamma_i\left(S^{G/V_n}\right)$ is nontrivial. This shows that $n$ is the $S^{G/V_n}$-nilpotence class.
\par For the second part, we work by induction on $i$. For $i=1$, we obtain  $V(S^{G/V_n})=V(S)/V_n(S)$ from Lemma \ref{vsn}. Suppose now that $i > 1$, and that $V_{i-1}\left(S^{G/V_n}\right) = V_{i-1}(S)/V_n(S).$ Then we have
\begin{align*}
V_i\left(S^{G/V_n)}\right) 
    &= [V_{i-1}\left(S^{G/V_n)}\right),\,S^{G/V_n}] \\[6pt]
    &= [V_{i-1}(S)/V_n(S),\, S^{G/V_n)}] \\[6pt]
    &= [V_{i-1}(S),\, S]V_n(S) / V_n(S) \\[6pt]
    &= V_i(S) / V_n(S),
\end{align*}
which is the desired outcome.            
		\end{enumerate}
	\end{proof}
    Using this along with \cite[Proposition 4.3]{Burkett2020AnTheory},  we achieve a new characterization of $S$-nilpotence groups. 
    \begin{corollary}
        $G$ is $S$-nilpotent if and only if the series $V_i$ terminates at \{1\}.
    \end{corollary}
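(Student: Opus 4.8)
The goal is to prove the corollary: $G$ is $S$-nilpotent if and only if the series $V_i$ terminates at $\{1\}$.

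The plan is to use the theorem just proved, namely that $\gamma_{i+1} \leq V_i \leq \gamma_i$ for all $i \geq 1$, together with the characterization of $S$-nilpotence via the lower $S$-central series. Recall from \cite[Proposition 4.3]{Burkett2020AnTheory} (explicitly invoked by the corollary) that $G$ is $S$-nilpotent if and only if its lower $S$-central series $\gamma_i(S)$ reaches $\{1\}$, i.e. $\gamma_{c+1}(S) = 1$ for some $c$.

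First I would establish the forward direction. Suppose $G$ is $S$-nilpotent. Then by \cite[Proposition 4.3]{Burkett2020AnTheory} there is an integer $c$ with $\gamma_{c+1}(S) = 1$. By part (1) of the preceding theorem, $V_{c+1} \leq \gamma_{c+1}(S) = 1$, so $V_{c+1} = 1$ and the series $V_i$ terminates at $\{1\}$.

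Next I would handle the converse. Suppose the series $V_i$ terminates, say $V_n = 1$ for some $n$. Again by part (1) of the theorem, $\gamma_{n+1}(S) \leq V_n = 1$, hence $\gamma_{n+1}(S) = 1$. By \cite[Proposition 4.3]{Burkett2020AnTheory}, this means $G$ is $S$-nilpotent. Both implications being established, the proof is complete.

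I do not anticipate a genuine obstacle here: the corollary is an immediate sandwiching consequence of the inclusions $\gamma_{i+1} \leq V_i \leq \gamma_i$, so the only care needed is to invoke the correct characterization of $S$-nilpotence in terms of termination of the lower $S$-central series and to match up the indices correctly (a drop to $\{1\}$ in one series forces a drop in the other at an adjacent index).

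\begin{proof}
By \cite[Proposition 4.3]{Burkett2020AnTheory}, $G$ is $S$-nilpotent if and only if its lower $S$-central series reaches the trivial subgroup, that is, $\gamma_{c+1}(S) = 1$ for some positive integer $c$.

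Suppose first that $G$ is $S$-nilpotent, and pick $c$ with $\gamma_{c+1}(S) = 1$. By part (1) of the previous theorem, $V_{c+1} \leq \gamma_{c+1}(S) = 1$, so $V_{c+1} = 1$ and the series $V_i$ terminates at $\{1\}$.

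Conversely, suppose $V_n = 1$ for some $n$. By part (1) of the previous theorem, $\gamma_{n+1}(S) \leq V_n = 1$, hence $\gamma_{n+1}(S) = 1$. Therefore $G$ is $S$-nilpotent by \cite[Proposition 4.3]{Burkett2020AnTheory}.
\end{proof}
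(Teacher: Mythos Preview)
Your proof is correct and follows exactly the approach the paper intends: it invokes the sandwich $\gamma_{i+1} \leq V_i \leq \gamma_i$ from part~(1) of the preceding theorem together with \cite[Proposition~4.3]{Burkett2020AnTheory}, which is precisely what the paper points to. The paper does not spell out the details, but your argument is the natural (and essentially unique) way to fill them in.
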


 For the particular case of $VZ(S)$-group, Theorem \ref{burvanish} immediately yields the following characterization of $VZ(S)$-group in terms of $\triangle$-product of supercharacter theories when $[G,S]\leq Z(S)$.
	\begin{lemma}\label{vzs} Let $S$ be a supercharacter theory of $G$ such that $[G,S]\leq Z(S)$. Then the following are equivalent 
	\begin{enumerate}
		\item $S$ is $\triangle$-product over $[G,S]$ and $Z(S)$. 
		\item For all $g\in G\setminus Z(S)$, $\abs{cl_S(g)}=\abs{cl_{S^{{G}/{[G,S]}}}(g[G,S])}\abs{[G,S]}$.  
		\item For each $g\in G\setminus Z(S)$ and $m\in [G,S]$, there exists $k\in cl_S(g)$ such that $g^{-1}k=m$.
        \item $G$ is a $VZ(S)$ group.
        \item $V(S)\leq Z(S)$.
	\end{enumerate}
	\end{lemma}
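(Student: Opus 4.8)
The plan is to prove Lemma \ref{vzs} by first observing that the conditions (1), (2), (3), (5) are already close to being an instance of Theorem \ref{burvanish} applied with the specific choice $M = [G,S]$ and $N = Z(S)$, and then to link the remaining condition (4), that $G$ is a $VZ(S)$-group, into that chain. Recall that Theorem \ref{burvanish} requires $M \leq N$; here that hypothesis is exactly $[G,S] \leq Z(S)$, which is the standing assumption of the lemma, so the theorem applies verbatim.

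First I would invoke Theorem \ref{burvanish} with $M = [G,S]$ and $N = Z(S)$. That gives immediately the equivalence of the following: $S$ is a $\triangle$-product over $[G,S]$ and $Z(S)$ (our (1)); for all $g \in G \setminus Z(S)$, $\abs{cl_S(g)} = \abs{cl_{S^{G/[G,S]}}(g[G,S])}\abs{[G,S]}$ (our (2)); for each $g \in G \setminus Z(S)$ and $m \in [G,S]$ there exists $k \in cl_S(g)$ with $g^{-1}k = m$ (our (3)); $V(S \mid [G,S]) \leq Z(S)$; and for all $g \in G \setminus Z(S)$, $\chi(g) = 0$ for every $\chi \in \mathrm{Irr}(S \mid [G,S])$. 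Now $V(S \mid [G,S])$ is by definition $V(S)$, so the fourth of these is precisely (5), and the fifth is, by the definition of a $VZ(S)$-group (every $\chi \in \mathrm{Irr}(S \mid [G,S])$ vanishes on $G \setminus Z(S)$), precisely (4). Thus all five conditions are translations of Theorem \ref{burvanish}'s five conditions under the dictionary $M = [G,S]$, $N = Z(S)$, and the lemma follows.

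The only points requiring a word of care are the two identifications. For (5) $\Leftrightarrow$ "$V(S \mid [G,S]) \leq Z(S)$": this is literally the definition $V(S) = V(S \mid [G,S])$ recalled in the introduction and before Lemma \ref{nvsn}. For (4) $\Leftrightarrow$ "every $\chi \in \mathrm{Irr}(S \mid [G,S])$ vanishes on $G \setminus Z(S)$": this is exactly the definition of $VZ(S)$-group given just before the target statement. So no real obstacle arises; the proof is a matter of matching definitions to Theorem \ref{burvanish}. The one thing to double-check is that Theorem \ref{burvanish} is stated with the generic pair $(M, N)$ and $M \leq N$, which our hypothesis $[G,S] \leq Z(S)$ supplies, and that the deflation $S^{G/M}$ in condition (2) of Theorem \ref{burvanish} becomes $S^{G/[G,S]}$ here, which is consistent with the statement of (2) in the lemma. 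I would write the proof in essentially the following compact form.

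\begin{proof}
Since $[G,S] \leq Z(S)$, we may apply Theorem \ref{burvanish} with $M = [G,S]$ and $N = Z(S)$. This gives the equivalence of the following: $S$ is a $\triangle$-product over $[G,S]$ and $Z(S)$; for all $g \in G \setminus Z(S)$, $\abs{cl_S(g)} = \abs{cl_{S^{G/[G,S]}}(g[G,S])}\abs{[G,S]}$; for each $g \in G \setminus Z(S)$ and $m \in [G,S]$ there exists $k \in cl_S(g)$ with $g^{-1}k = m$; $V(S \mid [G,S]) \leq Z(S)$; and for all $g \in G \setminus Z(S)$, $\chi(g) = 0$ for every $\chi \in \mathrm{Irr}(S \mid [G,S])$. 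These are precisely (1), (2), (3), (5) and (4) of the present statement: indeed $V(S) = V(S \mid [G,S])$ by definition, so the fourth condition above is (5); and $G$ being a $VZ(S)$-group means exactly that every $\chi \in \mathrm{Irr}(S \mid [G,S])$ vanishes on $G \setminus Z(S)$, which is the fifth condition above, that is, (4). This completes the proof.
\end{proof}
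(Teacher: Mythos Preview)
Your proof is correct and takes essentially the same approach as the paper, which simply states that the lemma is an immediate consequence of Theorem \ref{burvanish} applied with $M=[G,S]$ and $N=Z(S)$. Your write-up spells out the identifications $V(S)=V(S\mid [G,S])$ and the definition of a $VZ(S)$-group more explicitly than the paper does, but the argument is identical.
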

    In fact, in Lemma \ref{vzs}, the implication of (4) to (1), (2), and (3), as well as the equivalence between (4) and (5), hold without the assumption that $[G,S]\leq Z(S)$.
	\begin{theorem}\label{zs}
		$G$ is a $VZ(S)$-group if and only if
			  $[G,S]\leq V(S) \leq Z(S).$ 
	\end{theorem}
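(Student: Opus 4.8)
\textbf{Proof plan for Theorem \ref{zs}.}
The plan is simply to unwind the definitions and lean on the minimality property built into $V(S)$ together with Lemma \ref{vs}(2). Recall that $V(S)=V(S\mid[G,S])$ is, by definition, the smallest subgroup $V\leq G$ off of which every $\chi\in\mathrm{Irr}(S\mid[G,S])$ vanishes, while $G$ being a $VZ(S)$-group means precisely that every such $\chi$ vanishes on $G\setminus Z(S)$.

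For the forward implication, suppose $G$ is a $VZ(S)$-group. Then $Z(S)$ is one of the subgroups off of which every $\chi\in\mathrm{Irr}(S\mid[G,S])$ vanishes, so the minimality of $V(S)$ forces $V(S)\leq Z(S)$. Since $[G,S]$ is $S$-normal, Lemma \ref{nvsn} (equivalently Lemma \ref{vs}(2)) gives $[G,S]\leq V(S)$, and hence $[G,S]\leq V(S)\leq Z(S)$.

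For the converse, assume $[G,S]\leq V(S)\leq Z(S)$. From $V(S)\leq Z(S)$ we get $G\setminus Z(S)\subseteq G\setminus V(S)$, and by the defining property of $V(S)$ every $\chi\in\mathrm{Irr}(S\mid[G,S])$ vanishes on $G\setminus V(S)$, hence a fortiori on the smaller set $G\setminus Z(S)$. Thus $G$ is a $VZ(S)$-group.

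I do not expect any genuine obstacle here: the statement is essentially a repackaging of the definition of $V(S)$ combined with Lemma \ref{vs}(2). The only subtlety worth flagging is that, in contrast with Lemma \ref{vzs}, this equivalence does \emph{not} require the hypothesis $[G,S]\leq Z(S)$; the inclusion $[G,S]\leq V(S)$ holds unconditionally, so the actual content of the theorem is the equivalence of ``$G$ is a $VZ(S)$-group'' with the single condition $V(S)\leq Z(S)$, which is item (5) of Lemma \ref{vzs}.
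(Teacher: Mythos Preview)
Your proof is correct and matches the paper's approach almost exactly: the forward direction is identical (minimality of $V(S)$ plus Lemma \ref{vs}(2)), and for the converse the paper simply cites Lemma \ref{vzs} (whose hypothesis $[G,S]\leq Z(S)$ is guaranteed by the assumed chain), which amounts to the same ``$G\setminus Z(S)\subseteq G\setminus V(S)$'' argument you spell out directly. Your closing remark that the real content is the equivalence with $V(S)\leq Z(S)$ is also the point the paper makes in the paragraph immediately preceding the theorem.
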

	\begin{proof}
		  Let $G$ be a $VZ(S)$-group. Then, Lemma \ref{vs}(2) yields $[G,S]\leq V(S)$ and $V(S) \leq Z(S)$ follows from the fact  $V(S)$ is the smallest subgroup $V \leq G$ such that every character in  $\text{Irr}(S|[G,S])$ vanishes on $G \setminus V$. 
    The converse follows from Lemma \ref{vzs}.
	\end{proof}
  As a consequence of this, we obtain a sufficient condition for $S$-nilpotency of a group.
	\begin{theorem}
	    A $VZ(S)$ group is $S$-nilpotent with $S$-nilpotence class 2.
	\end{theorem}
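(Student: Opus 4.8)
The plan is to show that a $VZ(S)$-group $G$ is $S$-nilpotent of $S$-nilpotence class at most $2$ by exhibiting an explicit $S$-central series of length $2$. By Theorem \ref{zs}, being a $VZ(S)$-group means precisely that $[G,S]\leq V(S)\leq Z(S)$; in particular $[G,S]\leq Z(S)$. I would use this containment directly.

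First I would recall that $Z(S)\triangle_S G$ (it is a union of singleton $S$-classes, hence a union of $S$-classes) and that $[G,S]\triangle_S G$ by \cite[Corollary 4.2.4]{Burkett2018SubnormalityTheory} (Lemma \ref{comm}), so both are legitimate terms of an $S$-central series. Then I would verify the two required inclusions. For the bottom step, I need $G/Z(S)$ to be $S^{G/Z(S)}$-abelian, i.e. $Z(S^{G/Z(S)})=G/Z(S)$; equivalently, by \cite[Lemma 5.5]{Burkett2020AnTheory}, $[G,S^{G/Z(S)}] = [G,S]Z(S)/Z(S) = 1$, which holds exactly because $[G,S]\leq Z(S)$. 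For the top step, I need $G/1 = G$ with $Z(S) = Z(S^{G/1}) \geq$ the first term modulo the second, which is immediate. Hence
\[
G = N_1 \geq N_2 = Z(S) \geq N_3 = 1
\]
is an $S$-central series (checking $N_1/N_2 = G/Z(S) \leq Z(S^{G/Z(S)})$ as above, and $N_2/N_3 = Z(S) \leq Z(S)$), so $G$ is $S$-nilpotent with $S$-nilpotence class at most $2$.

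There is no serious obstacle here; the statement is essentially a repackaging of $[G,S]\leq Z(S)$ together with the basic fact that $[G,S]\leq Z(S)$ is equivalent to $G/Z(S)$ being $S$-abelian. The one point that deserves a sentence of care is the identity $[G,S^{G/N}] = [G,S]N/N$ for $N\triangle_S G$, which I would cite from \cite[Lemma 5.5]{Burkett2020AnTheory} (as is already done elsewhere in the paper) rather than reprove; applied with $N=Z(S)$ it yields the triviality of the relevant commutator and completes the verification. If one prefers a bound of exactly $2$ rather than ``at most $2$'', I would note that class $1$ would force $G$ to be $S$-abelian, which is excluded when one restricts attention to non $S$-abelian $G$ (consistent with the standing hypothesis used for $VZ(S)$-groups in Theorem \ref{zs}).
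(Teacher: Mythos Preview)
Your argument is correct. Both you and the paper extract from Theorem~\ref{zs} the key containment $[G,S]\leq Z(S)$, and the rest is a routine verification of nilpotence of class at most $2$. The only difference is in which series is used: the paper works with the \emph{lower} $S$-central series, observing in one line that $\gamma_3(S)=[[G,S],S]\leq [Z(S),S]=1$ (the last equality being immediate from the definition of $Z(S)$), whereas you exhibit the explicit $S$-central series $G\geq Z(S)\geq 1$ and check that $G/Z(S)$ is $S^{G/Z(S)}$-abelian via the quotient formula $\gamma_2(S^{G/N})=\gamma_2(S)N/N$ from \cite[Lemma 5.5]{Burkett2020AnTheory}. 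The paper's route is marginally shorter since it avoids the citation to the quotient commutator formula, but the two arguments are dual and equally valid; your remark that class exactly $2$ requires $G$ to be non $S$-abelian is a point the paper leaves implicit.
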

\begin{proof}
By  Theorem \ref{zs}, we get
  \begin{align*}
 [[G,S], S]\leq [Z(S),S]={1}
  \end{align*}
  Therefore the lower $S$-central series terminates at the trivial subgroup. Hence, G is $S$-nilpotent.
\end{proof}
 It is well known that the character degree reveals a lot about the structure of a finite group. In the setting of supercharacter theory, this has not been explored yet. For our next result, we define the set $\operatorname{scd}(S)=\{\chi(1):\chi\in \text{Irr}(S)\}$.  
\begin{lemma}
    Let $G$ be a $VZ(S)$-group with $m$ $S$-characters of which $n$ are non-linear. Then  $$\operatorname{scd}(S) = \left\{\, 1,\, \|X_1\|\sqrt{|G:Z(S)|}, \|X_2\|\sqrt{|G:Z(S)|}, \ldots, \|X_m\|\sqrt{|G:Z(S)|} \, \right\},$$  where
 $\|X_i\|^2 = \displaystyle\sum_{\chi \in X_i} \chi(1)^2$.

\begin{proof} 
 Let  $K_i,~i=1,2,\ldots m$ be the superclasses of $S$ and $\sigma_i\in Irr(S|[G,S])$. Then, by \cite[Lemma 3.4]{Burkett2020AnTheory} the restriction of $\sigma_i$ to $Z(S)$ satisfies
\[
    Res_{Z(S)}^G(\sigma_i ) = \sigma_i(1)\lambda_i, \qquad \text{for some linear character } \lambda_i \in Irr(Z(S)).
\] Also, by Equation \eqref{roworth}, we have 
\[
     \sum_{r=1}^m |K_r| \, |\sigma_i(K_r)|^2
    = |G|\|X_i\|^2.
\]
Let $Z(S)$ be the union of $K_i, ~i\in I$ where $I\subseteq \{1,2,\ldots, m\}$.
Then, since $(G,Z(S))$ is an $S$-GCP and $|K_i|=1$ for all $i\in I$, we achieve
\begin{align*}
    |G|\|X_i\|^2 
    &=  \sum_{r\in I}  \, |\sigma_i(K_r)|^2\\
    &=  \sum_{r\in I}  \, |\sigma_i(1)\lambda_i(K_r)|^2\\
    &= \sigma_i(1)^2 \, |Z(S)|.
\end{align*}
\end{proof}
\end{lemma}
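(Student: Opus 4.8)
The plan is to split $\operatorname{scd}(S)$ into the degrees of the linear $S$-characters and those of the non-linear ones. Recall that $\operatorname{Irr}(S)$ is the disjoint union of $\operatorname{Irr}(S/[G,S])$ and $\operatorname{Irr}(S\mid[G,S])$, and that the members of $\operatorname{Irr}(S/[G,S])$ are exactly the linear (degree-one) $S$-characters, being the supercharacters of the $S^{G/[G,S]}$-abelian group $G/[G,S]$. So the linear $S$-characters contribute precisely the value $1$ to $\operatorname{scd}(S)$, and it remains to evaluate $\sigma(1)$ for each non-linear $\sigma\in\operatorname{Irr}(S\mid[G,S])$ — these are the $S$-characters attached to the parts $X_i$ in the statement.

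Next I would fix such a $\sigma_i$, with part $X_i$, and combine two facts in the row-orthogonality relation \eqref{roworth}. First, by the definition of a $VZ(S)$-group (equivalently: $(G,Z(S))$ is an $S$-GCP by Lemma \ref{vs}(1), Theorem \ref{zs} and Lemma \ref{cp}(2), so every element of $G\setminus Z(S)$ is an $S$-Camina element), the character $\sigma_i$ vanishes on $G\setminus Z(S)$. Second, by \cite[Lemma 3.4]{Burkett2020AnTheory}, $\operatorname{Res}^{G}_{Z(S)}(\sigma_i)=\sigma_i(1)\lambda_i$ for a linear character $\lambda_i$ of $Z(S)$, whence $|\sigma_i(z)|=\sigma_i(1)$ for every $z\in Z(S)$. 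Writing $K_1,\dots,K_m$ for the $S$-classes, \eqref{roworth} becomes
\[
|G|\,\|X_i\|^2=\sum_{r=1}^{m}|K_r|\,|\sigma_i(K_r)|^2=\sum_{K_r\subseteq Z(S)}|K_r|\,|\sigma_i(K_r)|^2 ,
\]
since $\sigma_i$ annihilates every $S$-class lying outside $Z(S)$. Each surviving $K_r$ is a singleton contained in $Z(S)$, there are exactly $|Z(S)|$ of them, and on each one $|\sigma_i(K_r)|^2=\sigma_i(1)^2|\lambda_i(K_r)|^2=\sigma_i(1)^2$; hence $|G|\,\|X_i\|^2=\sigma_i(1)^2\,|Z(S)|$, i.e.\ $\sigma_i(1)=\|X_i\|\sqrt{|G:Z(S)|}$.

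Assembling the two parts gives $\operatorname{scd}(S)=\{1\}\cup\{\,\|X_i\|\sqrt{|G:Z(S)|}:\sigma_i\in\operatorname{Irr}(S\mid[G,S])\,\}$, which is the set in the statement once the (non-linear) parts are labelled $X_1,\dots,X_n$. The only non-mechanical step is the use of \cite[Lemma 3.4]{Burkett2020AnTheory}: I would check that its hypotheses apply, so that $\lambda_i$ is genuinely linear and $|\lambda_i|\equiv 1$ on $Z(S)$, and I would make sure the identification of $\operatorname{Irr}(S\mid[G,S])$ with the set of non-linear $S$-characters is clean, since that is what fixes the linear part of $\operatorname{scd}(S)$ to be exactly $\{1\}$. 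Everything else is the classical $VZ$-argument — the identity $\chi(1)^2=|G:Z(G)|$ for a non-linear $\chi\in\operatorname{Irr}(G)$ of a $VZ$-group — carried over to supercharacters, with $\|X_i\|^2$ taking the place of the $1$ on the right-hand side of the second orthogonality relation.
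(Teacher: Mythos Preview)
Your proposal is correct and follows essentially the same route as the paper: both fix a non-linear $\sigma_i\in\mathrm{Irr}(S\mid[G,S])$, invoke \cite[Lemma~3.4]{Burkett2020AnTheory} to write $\mathrm{Res}^G_{Z(S)}(\sigma_i)=\sigma_i(1)\lambda_i$, use the $VZ(S)$ hypothesis to discard all $S$-classes outside $Z(S)$ in the row-orthogonality relation \eqref{roworth}, and then count the $|Z(S)|$ singleton classes inside $Z(S)$ to obtain $|G|\,\|X_i\|^2=\sigma_i(1)^2\,|Z(S)|$. Your version is slightly more explicit in separating out the linear $S$-characters and in justifying that $(G,Z(S))$ is an $S$-GCP, but the argument is the same.
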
		
	\section{The subgroup $U(S\mid N)$}
    We now study the properties of $U(S \mid N)$, its connection to $V(S\mid N)$ and supercharacter theory products. We recall that
    \begin{align*}
       U(S \mid N) = \prod_{H \in \mathcal{H}} H
		\quad \text{where} \quad
		\mathcal{H} = \{\, H \triangle_S G \mid V(S \mid H) \leq N \,\}. 
    \end{align*}	
    Our investigation begins with establishing the $S$-normality of $U(S\mid N)$.
	\begin{lemma}\label{unormal}
		For each $S$-normal subgroup $N$ of $G$, $U(S \mid N)$ is an $S$-normal subgroup of $G$.
	\end{lemma}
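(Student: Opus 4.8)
The plan is to show that $U(S\mid N)$ is a union of $S$-classes by exhibiting it as a product of $S$-normal subgroups, which will close the argument since a product of $S$-normal subgroups is again $S$-normal. By definition, $U(S\mid N)=\prod_{H\in\mathcal{H}}H$ where $\mathcal{H}=\{H\triangle_S G : V(S\mid H)\leq N\}$. Each member $H$ of the indexing family $\mathcal{H}$ is already $S$-normal by hypothesis, so the only thing to verify is that the subgroup generated by a collection of $S$-normal subgroups is $S$-normal.

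First I would recall (or quickly reprove) the general fact that if $\{H_i\}_{i}$ is any family of $S$-normal subgroups of $G$, then $\langle H_i : i\rangle = \prod_i H_i$ is $S$-normal. The key point is that $S$-normality of a subgroup $M$ is equivalent to $M$ being invariant under the ``$S$-class closure'' operation: $M\triangle_S G$ if and only if $\mathrm{Cl}_S(m)\subseteq M$ for every $m\in M$. One can see this from the characterization $[G,S]$-style arguments, or simply from the definition that $M$ is a union of $S$-classes. Granting this, let $x\in\prod_i H_i$ and write $x=h_{1}h_{2}\cdots h_{k}$ with each $h_j$ in some $H_{i_j}$. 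Then $\mathrm{Cl}_S(x)$ is contained in the setwise product $\mathrm{Cl}_S(h_1)\mathrm{Cl}_S(h_2)\cdots\mathrm{Cl}_S(h_k)$ — this uses that the $S$-class of a product is contained in the product of the $S$-classes, which itself follows because each $S$-normal subgroup gives a quotient supercharacter theory and $S$-classes multiply compatibly under the canonical map, or directly from Theorem \ref{corth}-type orthogonality combined with the superclass structure. Since each $\mathrm{Cl}_S(h_j)\subseteq H_{i_j}\subseteq \prod_i H_i$, we get $\mathrm{Cl}_S(x)\subseteq\prod_i H_i$, so the product is $S$-normal.

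Applying this with the family $\mathcal{H}$ yields immediately that $U(S\mid N)=\prod_{H\in\mathcal{H}}H$ is $S$-normal, completing the proof. An alternative, perhaps cleaner, route avoids the product-of-classes lemma: since $\mathcal{H}$ is a finite family of $S$-normal subgroups and $G$ is finite, $U(S\mid N)$ equals $H_1H_2\cdots H_r$ for finitely many members, and one reduces to the two-factor case $H_1H_2$ with both $H_1,H_2\triangle_S G$; here one can invoke the deflation $S^{G/H_1}$ on $G/H_1$, note that $H_1H_2/H_1$ is $S^{G/H_1}$-normal (it is the image of the $S$-normal subgroup $H_2$ under $\pi$, hence a union of $S^{G/H_1}$-classes by the description $\mathrm{Cl}(S^{G/H_1})=\{\pi(K):K\in\mathrm{Cl}(S)\}$), and then apply Lemma \ref{lemma1} to conclude $H_1H_2\triangle_S G$; finally induct on $r$.

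The main obstacle, such as it is, is purely bookkeeping: confirming that the generated subgroup $\prod_{H\in\mathcal{H}}H$ coincides with a finite product and that the two-factor induction step is legitimate — this is where Lemma \ref{lemma1} and the explicit description of $\mathrm{Cl}(S^{G/H_1})$ do the real work. There is no genuine difficulty beyond assembling these standard facts; the statement is essentially a closure property of the class of $S$-normal subgroups under (finite) products.
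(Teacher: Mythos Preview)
Your proposal is correct and follows essentially the same approach as the paper: both reduce to the fact that a product of $S$-normal subgroups is again $S$-normal. The paper's proof is a one-line citation of \cite[Lemma~3.6]{Burkett2020AnTheory}, while your alternative route---passing to the deflation $S^{G/H_1}$, observing that $H_1H_2/H_1$ is a union of $S^{G/H_1}$-classes, and invoking Lemma~\ref{lemma1}---is precisely a proof of that cited lemma.
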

	\begin{proof}
		It follows from \cite[Lemma 3.6]{Burkett2020AnTheory} which states that product of two $S$-normal subgroups is also $S$-normal.
	\end{proof}
	The next result is useful to establish the monotonic behaviour of the subgroup $U(S\mid N)$.
	\begin{lemma}\label{irr}
		Let $M$ and $N$ be $S$-normal subgroups of $G$. Then 
		\[
		M \leq N \quad \text{if and only if} \quad \mathrm{Irr}(S \mid M) \subseteq \mathrm{Irr}(S \mid N).
		\]		
	\end{lemma}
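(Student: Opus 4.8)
The plan is to prove the two implications separately, using the definition of $\mathrm{Irr}(S\mid H)$ as the set of $S$-characters that do not contain $H$ in their kernel, together with the complementary description of $\mathrm{Irr}(S/H)$ as those $S$-characters that do contain $H$ in their kernel. The key elementary fact I would invoke is the monotonicity of kernels: if $M \leq N$ and $N \leq \ker(\chi)$, then $M \leq \ker(\chi)$. This gives the reverse containment of the ``vanishing'' sets, and dualizing yields the forward containment of the ``non-vanishing'' sets. Concretely, for the direction ($\Rightarrow$): assume $M \leq N$ and take $\chi \in \mathrm{Irr}(S\mid M)$, so $M \nleq \ker(\chi)$; then $N \nleq \ker(\chi)$ as well (otherwise $M \leq N \leq \ker(\chi)$), hence $\chi \in \mathrm{Irr}(S\mid N)$.

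For the converse ($\Leftarrow$), I would argue contrapositively: suppose $M \nleq N$. I need to produce a character $\chi \in \mathrm{Irr}(S\mid M) \setminus \mathrm{Irr}(S\mid N)$, i.e.\ an $S$-character with $N \leq \ker(\chi)$ but $M \nleq \ker(\chi)$. The natural candidate is an $S$-character of the deflated theory $S^{G/N}$, viewed as an $S$-character of $G$ with $N$ in its kernel. The point is that the intersection of the kernels of all such characters is exactly $N$: since $N$ is the kernel of the deflation map and $S^{G/N}$ is itself a supercharacter theory on $G/N$, the intersection of all $\ker(\tilde\chi)$ over $\tilde\chi \in \mathrm{Irr}(S^{G/N})$ is trivial in $G/N$ (this is the supercharacter analog of the fact that irreducible characters separate the identity; it follows from column orthogonality, Theorem \ref{corth}, applied to $G/N$), so pulling back, $\bigcap_{\chi \in \mathrm{Irr}(S/N)} \ker(\chi) = N$. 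Because $M \nleq N$, there must then exist $\chi \in \mathrm{Irr}(S/N)$ with $M \nleq \ker(\chi)$; this $\chi$ has $N \leq \ker(\chi)$, so $\chi \notin \mathrm{Irr}(S\mid N)$, while $M \nleq \ker(\chi)$ gives $\chi \in \mathrm{Irr}(S\mid M)$. This contradicts $\mathrm{Irr}(S\mid M) \subseteq \mathrm{Irr}(S\mid N)$, completing the contrapositive.

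The main obstacle is the converse direction, specifically making rigorous the claim that $\bigcap_{\chi \in \mathrm{Irr}(S/N)} \ker(\chi) = N$. One clean way around even this: take $x \in M \setminus N$; its image $xN$ is a nontrivial element of $G/N$, so by column orthogonality (Theorem \ref{corth}) applied in $G/N$ there is an $S^{G/N}$-character $\tilde\chi$ with $\tilde\chi(xN) \neq \tilde\chi(1)$, whence $xN \notin \ker(\tilde\chi)$ and the corresponding $\chi \in \mathrm{Irr}(S)$ satisfies $N \leq \ker(\chi)$ but $x \notin \ker(\chi)$, so $M \nleq \ker(\chi)$. Everything else is routine kernel bookkeeping, and I would keep those steps brief.
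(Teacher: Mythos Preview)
Your proposal is correct and follows essentially the same approach as the paper: both directions hinge on the identity $N = \bigcap_{\chi \in \mathrm{Irr}(S/N)} \ker(\chi)$ for an $S$-normal subgroup $N$. The paper packages the converse slightly more directly---passing to complements to get $\mathrm{Irr}(S/N) \subseteq \mathrm{Irr}(S/M)$ and then intersecting kernels to conclude $M \leq N$---rather than your contrapositive element-by-element argument via column orthogonality, but the underlying content is the same.
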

	\begin{proof}
		It is clear that $\mathrm{Irr}(S \mid M) \subseteq \mathrm{Irr}(S \mid N)$ when $M \leq N$.	Conversely, suppose that $\mathrm{Irr}(S \mid M) \subseteq \mathrm{Irr}(S \mid N)$.  
		This implies that $\mathrm{Irr}(S/N) \subseteq \mathrm{Irr}(S/M)$, and so
		\[
		M = \bigcap_{\chi \in \mathrm{Irr}(S/M)} \ker(\chi) 
		\;\leq\; \bigcap_{\chi \in \mathrm{Irr}(S/N)} \ker(\chi) = N.
		\]		
	\end{proof}
    \begin{lemma}\label{uorder}
   Let $S$ be a supercharacter theory of $G$, and let $H$ and $N$ be $S$-normal subgroups.  If $H \leq N$, then $U(S \mid H) \leq U(S \mid N)$.
\end{lemma}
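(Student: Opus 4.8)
The plan is to argue entirely at the level of the defining family of subgroups, so no machinery beyond the definition of $U(S\mid N)$ is needed. Recall that
\[
U(S\mid N)=\prod_{K\in\mathcal H_N}K,\qquad \mathcal H_N=\{\,K\triangle_S G : V(S\mid K)\leq N\,\},
\]
and likewise $U(S\mid H)=\prod_{K\in\mathcal H_H}K$ with $\mathcal H_H=\{\,K\triangle_S G: V(S\mid K)\leq H\,\}$. First I would observe the set inclusion $\mathcal H_H\subseteq\mathcal H_N$: if $K\triangle_S G$ satisfies $V(S\mid K)\leq H$, then by transitivity of $\leq$ together with the hypothesis $H\leq N$ we get $V(S\mid K)\leq N$, so $K\in\mathcal H_N$.

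Next I would promote this to the desired subgroup inclusion. By Lemma \ref{unormal}, $U(S\mid N)$ is an $S$-normal subgroup of $G$, and by its very definition it contains every member of $\mathcal H_N$. Since $\mathcal H_H\subseteq\mathcal H_N$, every $K\in\mathcal H_H$ lies inside $U(S\mid N)$; hence the subgroup generated by $\bigcup_{K\in\mathcal H_H}K$, which is precisely $U(S\mid H)$, is contained in $U(S\mid N)$. This completes the argument.

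I expect no real obstacle here: the statement is a monotonicity fact that falls out of unwinding the definition of $U(S\mid\cdot)$ as a product indexed by a family that shrinks as the bound shrinks. (If one preferred a more character-theoretic route, one could instead invoke Lemma \ref{irr} to note $\mathrm{Irr}(S\mid H)\subseteq\mathrm{Irr}(S\mid N)$ and compare the conditions ``every character in $\mathrm{Irr}(S\mid K)$ vanishes on $G\setminus H$'' versus ``on $G\setminus N$'' via Theorem \ref{ugroupp}(1), but this is heavier than necessary.) The only point worth stating carefully is that a product of subgroups each contained in a fixed subgroup is again contained in that subgroup, which is immediate since the subgroup is closed under products.
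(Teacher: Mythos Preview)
Your proof is correct and in fact more elementary than the paper's own argument. The paper proceeds by computing $V(S\mid U(S\mid H))$: it uses the multiplicativity $V\bigl(S\mid \prod_{K\in\mathcal H}K\bigr)=\prod_{K\in\mathcal H}V(S\mid K)$ (from \cite[Lemma~4.4]{Burkett2020Vanishing-offProducts}) to conclude $V(S\mid U(S\mid H))\leq H\leq N$, and then invokes the characterization in Lemma~\ref{ugroup} (which in the paper is stated \emph{after} the present lemma, so this is a forward reference) to deduce $U(S\mid H)\leq U(S\mid N)$. Your route sidesteps both the external citation and the forward reference by arguing directly at the level of the indexing families: the inclusion $\mathcal H_H\subseteq\mathcal H_N$ is immediate from $H\leq N$, and the product over a subfamily sits inside the product over the full family because the latter is already a subgroup. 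The paper's approach has the merit of illustrating the interplay between $U(S\mid\cdot)$ and $V(S\mid\cdot)$ encoded in Lemma~\ref{ugroup}, but for this particular monotonicity statement your direct argument is cleaner and logically self-contained.
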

\begin{proof}
    We have 
    \begin{align*}
        V(S|U(S|H))=&V(S|\prod_{K\in\mathcal{H}}K)\\
                =&\prod_{K\in\mathcal{H}}V(S|K) 
    \end{align*}
    where $V(S|K)\leq H\leq N$ for all $K\in\mathcal{H}$. This yields $$V(S|U(S|H))\leq N.$$
    Employing Lemma \ref{ugroup}, we conclude the proof.
\end{proof}
The following result is important as it shows how closely the subgroups $U(S \mid N)$ and $V(S \mid H)$ are linked, where $H$ and $N$ are $S$-normal subgroups of $G$.	
	\begin{lemma}\label{ugroup}
		Let  $H, N\triangle_SG$. Then 
        \begin{align*}
            H \leq U(S \mid N) \quad \Longleftrightarrow \quad V(S \mid H) \leq N.
        \end{align*}
	\end{lemma}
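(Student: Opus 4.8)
The plan is to prove the equivalence in Lemma~\ref{ugroup} by handling the two directions separately, leveraging the definition of $U(S\mid N)$ as the product of all $S$-normal subgroups $K$ with $V(S\mid K)\leq N$.

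\textbf{Forward direction.} Assume $H\leq U(S\mid N)$. Write $U(S\mid N)=\prod_{K\in\mathcal{H}}K$ where $\mathcal{H}=\{K\triangle_S G : V(S\mid K)\leq N\}$. The key fact I would invoke is the product formula from \cite[Proposition 4.2]{Burkett2020Vanishing-offProducts}, namely $V(S\mid L)=\prod_{\chi\in\mathrm{Irr}(S\mid L)}V(\chi)$, combined with the observation that for a product $L=\prod_i K_i$ of $S$-normal subgroups one has $V(S\mid L)=\prod_i V(S\mid K_i)$. This last identity itself needs justification: it should follow because $\mathrm{Irr}(S\mid \prod_i K_i)=\bigcup_i \mathrm{Irr}(S\mid K_i)$, which in turn follows from Lemma~\ref{comm} (the kernel-intersection description of $S$-normal subgroups) — a character has $\prod_i K_i\not\leq\ker\chi$ iff $K_i\not\leq\ker\chi$ for some $i$. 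Applying this with each $K_i\in\mathcal{H}$ gives $V(S\mid U(S\mid N))=\prod_{K\in\mathcal{H}}V(S\mid K)\leq N$. Then, since $H\leq U(S\mid N)$ and both are $S$-normal, Lemma~\ref{irr} gives $\mathrm{Irr}(S\mid H)\subseteq\mathrm{Irr}(S\mid U(S\mid N))$, hence $V(S\mid H)=\prod_{\chi\in\mathrm{Irr}(S\mid H)}V(\chi)\leq\prod_{\chi\in\mathrm{Irr}(S\mid U(S\mid N))}V(\chi)=V(S\mid U(S\mid N))\leq N$.

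\textbf{Reverse direction.} Assume $V(S\mid H)\leq N$. Then by definition $H$ is one of the subgroups $K$ appearing in the index set $\mathcal{H}$ (noting $H\triangle_S G$ by hypothesis), so $H\leq\prod_{K\in\mathcal{H}}K=U(S\mid N)$. This direction is essentially immediate from the definition.

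\textbf{Anticipated obstacle.} The main technical point is establishing $V(S\mid \prod_i K_i)=\prod_i V(S\mid K_i)$ for $S$-normal $K_i$, equivalently $\mathrm{Irr}(S\mid\prod_i K_i)=\bigcup_i\mathrm{Irr}(S\mid K_i)$. One must be careful that $\prod_i K_i$ is genuinely $S$-normal (Lemma~\cite[Lemma 3.6]{Burkett2020AnTheory}, as used in Lemma~\ref{unormal}) so that $\mathrm{Irr}(S\mid\cdot)$ and the deflation machinery apply, and that the kernel-intersection formula of Lemma~\ref{comm} — stated there for $[G,S]$ — extends to an arbitrary $S$-normal subgroup $L$ in the form $L=\bigcap_{\chi\in\mathrm{Irr}(S/L)}\ker\chi$; this is exactly the ingredient already used in the proof of Lemma~\ref{irr}, so it is available. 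Also worth a remark: one should check there is no circularity — Lemma~\ref{uorder} invokes Lemma~\ref{ugroup}, so Lemma~\ref{ugroup} must be proved without reference to Lemma~\ref{uorder}, which the argument above respects. Once these set-theoretic identities on $\mathrm{Irr}(S\mid\cdot)$ are in place, the proof is a short chain of inclusions.
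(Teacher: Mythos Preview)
Your proof is correct and follows essentially the same route as the paper's: the reverse direction is immediate from the definition of $U(S\mid N)$, and the forward direction proceeds via the chain $V(S\mid H)\leq V(S\mid U(S\mid N))=\prod_{K\in\mathcal{H}}V(S\mid K)\leq N$. The only difference is packaging: where you justify the monotonicity $V(S\mid H)\leq V(S\mid U(S\mid N))$ through Lemma~\ref{irr} and the product formula $V(S\mid\prod_i K_i)=\prod_i V(S\mid K_i)$ through the identity $\mathrm{Irr}(S\mid\prod_i K_i)=\bigcup_i\mathrm{Irr}(S\mid K_i)$, the paper simply cites \cite[Lemma~4.3, Lemma~4.4]{Burkett2020Vanishing-offProducts} for these two facts.
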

	
	\begin{proof}
		If $V(S \mid H) \leq N$, then it is clear that $H \leq U(S \mid N)$.  
		
		Conversely, suppose that $H \leq U(S \mid N)$. Then by \cite[Lemma 4.3, Lemma 4.4]{Burkett2020Vanishing-offProducts}
		\[
		V(S \mid H) \leq V(S \mid U(S \mid N)) 
		= \prod_{K \in \mathcal{H}} V(S \mid K) \leq N,
		\]
		where 
		\[
		\mathcal{H} = \{\, K \triangle_S G \mid V(S \mid K) \leq N \,\}.
		\]
	\end{proof}
  As a result of Lemma \ref{ugroup}, \ref{nvsn} and Theorem \ref{burvanishh}, we prove Theorem \ref{theorem_uprod}, which we restate   as a corollary to this.
     \begin{corollary}\label{ucorr}
    Let $H$, $N$ be  $S$-normal subgroups of  $G$. Then  $ H\leq U(S\mid N) $ if and only if $S$ is a $\Delta$-product over $H$ and $N$.
\end{corollary}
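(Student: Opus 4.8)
\textbf{Proof plan for Corollary \ref{ucorr}.}
The plan is to derive the equivalence by chaining together the characterizations already available in the excerpt, so that almost no new computation is needed. The key observation is that Lemma \ref{ugroup} reduces the condition $H \leq U(S \mid N)$ to the containment $V(S \mid H) \leq N$, and then Theorem \ref{burvanish} converts the latter into a statement about the $\triangle$-product. However, there is a bookkeeping subtlety: Theorem \ref{burvanish} speaks of a $\triangle$-product over $M$ and $N$ under the hypothesis $M \leq N$, whereas here the two $S$-normal subgroups $H$ and $N$ are not assumed comparable in either direction a priori. So the first step is to argue that the hypothesis $V(S \mid H) \leq N$ (equivalently $H \leq U(S \mid N)$) already forces $H \leq N$: indeed, by Lemma \ref{nvsn} we have $H \leq V(S \mid H)$, and combining this with $V(S \mid H) \leq N$ gives $H \leq N$. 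Conversely, if $S$ is a $\triangle$-product over $H$ and $N$, then by definition of the $\triangle$-product this presupposes $H \leq N$, so again the two subgroups are comparable. Thus in both directions we may safely invoke Theorem \ref{burvanish} with $M = H$.

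With the comparability $H \leq N$ secured, the proof is a direct two-way implication. First I would assume $H \leq U(S \mid N)$; by Lemma \ref{ugroup} this is equivalent to $V(S \mid H) \leq N$, which is precisely condition (4) of Theorem \ref{burvanish} with $M = H$, and hence equivalent to condition (1) of that theorem, namely that $S$ is a $\triangle$-product over $H$ and $N$. Conversely, if $S$ is a $\triangle$-product over $H$ and $N$, then condition (1) of Theorem \ref{burvanish} holds, so condition (4), $V(S \mid H) \leq N$, holds as well, and Lemma \ref{ugroup} then yields $H \leq U(S \mid N)$. This closes the loop.

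I expect the only real subtlety — and the step most worth spelling out carefully in the write-up — to be the comparability issue just described: one must not apply Theorem \ref{burvanish} blindly, since its statement is framed for a pair $M \leq N$, and it is the combination of Lemma \ref{nvsn} with the defining inequality of $U(S \mid N)$ that legitimizes taking $M = H$. Everything else is a formal substitution into results already proved in the excerpt. For the second part of Theorem \ref{theorem_uprod} (the statement $N = U(S \mid N)$ iff $S$ is a $\ast$-product over $N$), the same strategy applies with $H = N$: Lemma \ref{ugroup} gives $N \leq U(S \mid N) \iff V(S \mid N) \leq N$, while Lemma \ref{nvsn} supplies the reverse inclusion $N \leq V(S \mid N)$, so $N = U(S \mid N) \iff V(S \mid N) = N$, and Theorem \ref{burvanishh} identifies the latter with $S$ being a $\ast$-product over $N$.
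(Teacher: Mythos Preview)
Your proposal is correct and follows essentially the same approach as the paper, which disposes of the corollary in a single sentence citing Lemma~\ref{ugroup}, Lemma~\ref{nvsn}, and the Burkett--Lewis characterizations (Theorems~\ref{burvanish} and~\ref{burvanishh}). Your write-up is in fact more careful than the paper's: you make explicit the comparability step $H \leq V(S\mid H) \leq N$ needed before invoking Theorem~\ref{burvanish}, a point the paper leaves implicit in its citation of Lemma~\ref{nvsn}.
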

     \begin{corollary}\label{ucor}
     Let $N$ be an $S$-normal subgroup of $G$. Then $ N= U(S\mid N) $  if and only if $S$ is a $\ast$-product over $N$.
\end{corollary}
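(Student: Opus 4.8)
The plan is to reduce the statement to Theorem \ref{burvanishh} by first observing that the inclusion $U(S \mid N) \le N$ holds unconditionally. Indeed, write $U(S \mid N) = \prod_{H \in \mathcal{H}} H$ with $\mathcal{H} = \{\, H \triangle_S G \mid V(S \mid H) \le N \,\}$. For every $H \in \mathcal{H}$, Lemma \ref{nvsn} gives $H \le V(S \mid H)$, and by membership in $\mathcal{H}$ we have $V(S \mid H) \le N$; hence $H \le N$ for each such $H$, and taking the product yields $U(S \mid N) \le N$. Therefore the asserted equality $N = U(S \mid N)$ is equivalent to the single reverse inclusion $N \le U(S \mid N)$.

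Next I would invoke Lemma \ref{ugroup} in the special case $H = N$, which states precisely that $N \le U(S \mid N)$ if and only if $V(S \mid N) \le N$. Combining this with Lemma \ref{nvsn}, which gives $N \le V(S \mid N)$, the condition $V(S \mid N) \le N$ is equivalent to $V(S \mid N) = N$. Finally, Theorem \ref{burvanishh} asserts that $V(S \mid N) = N$ holds if and only if $S$ is a $\ast$-product over $N$. Chaining these equivalences closes the argument: $N = U(S \mid N) \iff N \le U(S \mid N) \iff V(S \mid N) \le N \iff V(S \mid N) = N \iff S$ is a $\ast$-product over $N$.

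There is no real obstacle here; each step is a direct application of an earlier result, and the only mild subtlety is recognizing that $U(S \mid N) \le N$ is automatic, which collapses one direction of the biconditional to triviality. An alternative route, which I could present instead if it reads more cleanly in context, is to obtain the statement as the case $H = N$ of Corollary \ref{ucorr} together with the same unconditional inclusion $U(S \mid N) \le N$: a $\triangle$-product over $N$ and $N$ is by definition a $\ast$-product over $N$, so $N \le U(S \mid N)$ is equivalent to $S$ being a $\ast$-product over $N$, and the unconditional inclusion upgrades this to the claimed equality. I expect the first route to be the shorter one to write out.
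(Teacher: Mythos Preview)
Your proof is correct and matches the paper's approach: the paper simply states that the corollary follows from Lemma~\ref{ugroup}, Lemma~\ref{nvsn}, and Theorem~\ref{burvanishh}, and your argument unpacks exactly that chain of implications. Your alternative route via Corollary~\ref{ucorr} is also sound and essentially the same idea, since a $\triangle$-product over $N$ and $N$ is a $\ast$-product over $N$.
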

Before proving our main results, we recall from \cite[p.~ 863]{Burkett2020AnTheory} that for any $H\subseteq G$,  $H^S$ represents the smallest $S$-normal subgroup of $G$ containing $H$. We call this the $S$-\emph{normal closure} of $H$. Now, we are ready for the proof of Theorem  \ref{ugroupp}.\\
\noindent\textbf{Proof of Theorem \ref{ugroupp}.}
		If every character in $\text{Irr}(S \mid H)$ vanishes on $G \setminus N$, then by Theorem \ref{burvanish}
		$V(S \mid H) \leq N$, so $H \leq U(S \mid N)$ by Lemma \ref{ugroup}. Employing this, Lemma \ref{ugroup} completes the proof of (1).
		
	\par 	To prove (2), first note that for
		$\chi \in Irr(S)$, we have $$g \in \ker(\chi)~~ \text{if and only if }
		\langle g \rangle^S \leq \ker(\chi).$$  Hence, every supercharacter 	$\chi \in Irr(S)$ satisfying $g \notin \ker(\chi)$ vanishes on 
		$G \setminus N$ if and only if every character 
		$\chi \in Irr(S \mid \langle g \rangle^S)$ vanishes on $G \setminus N$. Lemma \ref{ugroup} and Theorem \ref{burvanish} implies that the latter happens if and only if $V(S \mid \langle g \rangle^S) \leq N$,  if and only if $\langle g \rangle^S \leq U(S \mid N)$. Observing that $U(S \mid N)$ is $S$-normal in $G$ by Lemma \ref{unormal}, the desired conclusion follows as $g \in U(S \mid N)$ if and only if$\langle g \rangle^S \leq U(S \mid N)$. 	
		\hfill $\Box$\\  
 \begin{lemma}\label{uorder}
     Let $G$ be a non $S$-abelian group. For each  $N \triangle_S G$, we have
			    $U(S \mid N) \leq N \cap [G,S]$.						
\end{lemma}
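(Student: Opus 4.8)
The plan is to show the two containments $U(S\mid N)\leq N$ and $U(S\mid N)\leq [G,S]$ separately, exploiting the characterization of $U(S\mid N)$ already established in Lemma~\ref{ugroup} together with the basic inclusion $H\leq V(S\mid H)$ from Lemma~\ref{nvsn}. For the first containment, I would take an arbitrary $S$-normal subgroup $K$ with $V(S\mid K)\leq N$; since $K\leq V(S\mid K)$ by Lemma~\ref{nvsn}, we immediately get $K\leq N$. As $U(S\mid N)$ is the product of all such $K$, it follows that $U(S\mid N)\leq N$.

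For the second containment, $U(S\mid N)\leq [G,S]$, the idea is to use Corollary~\ref{ucorr} (equivalently Theorem~\ref{theorem_uprod}(1)): writing $U:=U(S\mid N)$, since trivially $U\leq U(S\mid N)$, the supercharacter theory $S$ is a $\triangle$-product over $U$ and $N$. By Theorem~\ref{burvanish} (the equivalence of (1) and (5), say), every $\chi\in\mathrm{Irr}(S\mid U)$ vanishes on $G\setminus N$. I now want to argue that $\mathrm{Irr}(S\mid U)$ must in fact be empty unless $U\leq[G,S]$ — more precisely, I want to show that if $U\nleq [G,S]$ then $S$ would be forced to be $S$-abelian, contradicting the hypothesis. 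Concretely: suppose $U\nleq [G,S]$. Then $U[G,S]/[G,S]$ is a nontrivial $S^{G/[G,S]}$-normal subgroup, so there is a linear character $\varphi\in\mathrm{Irr}(S/[G,S])$ with $U\nleq\ker(\varphi)$ (by Lemma~\ref{comm}, $[G,S]$ is the intersection of the kernels of such $\varphi$). This $\varphi$ lies in $\mathrm{Irr}(S\mid U)$, hence vanishes on $G\setminus N$; but a linear supercharacter never vanishes (it takes values of modulus $1$ on the whole group). Therefore $G\setminus N=\emptyset$, i.e. $N=G$, and then $U(S\mid N)=U(S\mid G)=G$. But $V(S\mid G)=V(S)\supseteq[G,S]$, and the defining condition $V(S\mid G)\leq G$ holds automatically for every $S$-normal subgroup, so this case needs separate handling — actually when $N=G$ the inequality $U(S\mid N)\leq N\cap[G,S]$ reads $G\leq [G,S]$, forcing $G=[G,S]$, which contradicts $G$ being non $S$-abelian only if $[G,S]<G$; indeed $[G,S]=G$ is possible, so I would instead rule out $N=G$ by noting that $U(S\mid G)\leq G$ is the trivial bound and the interesting statement of the lemma is for proper $N$, or alternatively observe directly that linearity forces the contradiction only when $N\neq G$ and treat $N=G$ by the remark that $U(S\mid N)=N\cap[G,S]$ fails only if $[G,S]\neq G$.

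The cleanest route, which I would actually follow, avoids the $N=G$ subtlety: apply Corollary~\ref{ucorr} with $H=U(S\mid N)$ to conclude $S$ is a $\triangle$-product over $U(S\mid N)$ and $N$, hence (by the definition of $\triangle$-product and Hendrickson's theory) $U(S\mid N)\cap N$ plays the role forcing superclass structure; but more directly, I invoke the containment $V(S\mid U(S\mid N))\leq N$ from Lemma~\ref{ugroup} applied to $H=U(S\mid N)$, together with the fact proved in \cite[Proposition 4.2]{Burkett2020Vanishing-offProducts} that $V(S\mid H)$ is always $S$-normal and contains $H$. Then I would separately show $U(S\mid N)\leq[G,S]$ by taking any $K\in\mathcal{H}$ and any linear $\varphi\in\mathrm{Irr}(S/[G,S])$: if $K\nleq\ker(\varphi)$ then $\varphi\in\mathrm{Irr}(S\mid K)$, and since $V(S\mid K)\leq N$ with $N$ proper (the non $S$-abelian hypothesis together with $[G,S]\neq 1$ guaranteeing we may reduce to this), $\varphi$ vanishes somewhere, contradicting linearity; hence $K\leq\ker(\varphi)$ for all such $\varphi$, so $K\leq\bigcap\ker(\varphi)=[G,S]$ by Lemma~\ref{comm}. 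Taking the product over $K\in\mathcal{H}$ gives $U(S\mid N)\leq[G,S]$, and combining with $U(S\mid N)\leq N$ completes the proof.

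The main obstacle I anticipate is handling the degenerate case $N=G$ cleanly: there $\mathrm{Irr}(S\mid K)$ can be nonempty without any vanishing contradiction, so the linearity argument collapses. I expect the fix is to observe that the non $S$-abelian hypothesis is used precisely to ensure $[G,S]\neq 1$, and that for $N=G$ the claimed inequality $U(S\mid G)\leq G\cap[G,S]=[G,S]$ may genuinely require the separate observation that $U(S\mid G)=[G,S]$ (which should follow from Corollary~\ref{ucor}-type reasoning, since $S$ is trivially a $\triangle$-product over $[G,S]$ and $G$), or else the statement is implicitly understood for $N$ with $G/N$ non-$S$-abelian so that genuine vanishing characters exist. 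I would resolve this by first disposing of $N=G$ via the remark that $V(S\mid [G,S])=V(S)\leq G$ always holds, so $[G,S]\in\mathcal{H}$ when $N=G$, while no $K\nleq[G,S]$ can lie in $\mathcal{H}$ by the linearity argument, giving $U(S\mid G)=[G,S]$ exactly; for proper $N$ the argument above goes through verbatim.
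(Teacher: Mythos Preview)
Your core argument is essentially the paper's. For $U(S\mid N)\leq N$ the paper also uses $H\leq V(S\mid H)$ (Lemma~\ref{nvsn}); for $U(S\mid N)\leq[G,S]$ the paper argues, just as you do, that any $\chi\in\mathrm{Irr}(S\mid U(S\mid N))$ vanishes on $G\setminus N$ and hence cannot be linear, then concludes via Lemma~\ref{irr} from $\mathrm{Irr}(S\mid U(S\mid N))\subseteq\mathrm{Irr}(S\mid[G,S])$. Your version, pushing the linearity contradiction down to each $K\in\mathcal{H}$ and using $[G,S]=\bigcap_{\varphi}\ker\varphi$, is a trivial repackaging of the same idea.

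Your worry about $N=G$ is legitimate---the paper's proof silently assumes $G\setminus N\neq\emptyset$ as well---but your proposed resolution is wrong. When $N=G$ the defining condition $V(S\mid K)\leq G$ is vacuous, so $\mathcal{H}$ consists of \emph{all} $S$-normal subgroups, in particular $G\in\mathcal{H}$, and therefore $U(S\mid G)=G$, not $[G,S]$. The linearity argument yields no contradiction here because a linear character ``vanishing on $G\setminus G=\emptyset$'' is an empty condition. Consequently the inequality $U(S\mid G)\leq[G,S]$ fails whenever $[G,S]<G$, so the lemma as literally stated is false for $N=G$. The honest fix is simply to read the lemma for proper $S$-normal $N$ (which is how it is applied later, e.g.\ with $N=Z(S)$); under that restriction your argument and the paper's coincide and are correct.
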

\begin{proof}
     The fact that $U(S \mid N) \leq N$ follows from Lemma \ref{ugroup}, as 
		$N \leq V(S \mid N)$ by Lemma \ref{nvsn}.
		By \cite[page 863]{Burkett2020AnTheory}, we know that $\chi\in Irr(S/[G,S])$ are precisely the linear $S$-characters,  and hence such characters do not vanish on any element of $G$. Since, by point (1), for  $\chi\in Irr(S \mid U(S \mid N))$, $\chi$ vanishes on $G\setminus N$, $\chi$ must be non-linear. That is, 
        \begin{align*}
            Irr(S \mid U(S \mid N)) \subseteq Irr(S \mid [G,S]).
        \end{align*}
		Lemma \ref{irr} completes the proof.
\end{proof}

   \begin{theorem}
Let  $N > 1$ be $S$-normal. There exists a non-trivial $S$-normal subgroup $H$ contained in $N$ for which  $S = S_H \,\Delta\, S^{G/N}$ if and only if $U(S \mid N) > 1$.
\end{theorem}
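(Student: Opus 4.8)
This is an existence statement about a non-trivial $S$-normal subgroup $H \le N$ with $S = S_H \,\Delta\, S^{G/N}$, and the plan is to reduce it to the already-established machinery linking $\Delta$-products with the subgroups $U(S\mid N)$ and $V(S\mid H)$. The key translation is Corollary \ref{ucorr} (the restatement of Theorem \ref{theorem_uprod}(1)): for $S$-normal $H$, one has $H \le U(S\mid N)$ if and only if $S$ is a $\Delta$-product over $H$ and $N$. So the statement $S = S_H\,\Delta\,S^{G/N}$ for some non-trivial $S$-normal $H \le N$ is equivalent to the existence of a non-trivial $S$-normal $H$ with $H \le U(S\mid N)$, which is exactly the assertion $U(S\mid N) > 1$.

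\begin{proof}
Suppose first that $U(S\mid N) > 1$. By Lemma \ref{unormal}, $H := U(S\mid N)$ is $S$-normal in $G$, and by Lemma \ref{uorder} we have $H \le N$. Since $H \le U(S\mid N)$ trivially, Corollary \ref{ucorr} gives that $S$ is a $\Delta$-product over $H$ and $N$, i.e.\ $S = S_H \,\Delta\, S^{G/N}$, and $H$ is non-trivial by hypothesis.

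Conversely, suppose there is a non-trivial $S$-normal subgroup $H \le N$ with $S = S_H\,\Delta\,S^{G/N}$. Then $S$ is a $\Delta$-product over $H$ and $N$, so Corollary \ref{ucorr} yields $H \le U(S\mid N)$. As $H > 1$, we conclude $U(S\mid N) > 1$.
\end{proof}

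**Main obstacle and remarks.** The proof is essentially a packaging of Corollary \ref{ucorr}, so the only thing to be careful about is the forward direction: one must exhibit a concrete non-trivial $H$, and the natural candidate $H = U(S\mid N)$ works precisely because $U(S\mid N)$ is itself $S$-normal (Lemma \ref{unormal}) and contained in $N$ (Lemma \ref{uorder}), and because it trivially satisfies $H \le U(S\mid N)$. One subtlety worth double-checking is whether Corollary \ref{ucorr} requires $H \le N$ as a hypothesis for the $\Delta$-product to even be defined; the $\Delta$-product over $H$ and $N$ presupposes $H \le N$, and indeed Lemma \ref{uorder} delivers $U(S\mid N) \le N \cap [G,S] \le N$, so this is automatic. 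The hypothesis that $G$ is non $S$-abelian is inherited from Lemma \ref{uorder} and is not otherwise needed; the condition $N > 1$ plays no role in the argument itself but is the natural setting in which the statement is non-vacuous. I do not anticipate any genuine difficulty here — this is a corollary-level consequence of the equivalence already proved, stated separately because it recasts $U(S\mid N) > 1$ as a structural decomposition statement.
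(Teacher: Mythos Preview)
Your proof is correct and follows essentially the same approach as the paper: both take $H = U(S\mid N)$ in the forward direction, use its $S$-normality and the containment $U(S\mid N)\le N$, then invoke Corollary~\ref{ucorr}, and both dispatch the converse directly via Corollary~\ref{ucorr}. The only cosmetic difference is that the paper splits the forward direction into the cases $U(S\mid N)=N$ (handled via the $\ast$-product, Corollary~\ref{ucor}) and $U(S\mid N)<N$, whereas you apply Corollary~\ref{ucorr} uniformly; your version is slightly cleaner in this respect.
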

\begin{proof}
Let $U(S \mid N) > 1$.  By Lemma \ref{uorder}, we obtain $ U(S \mid N)\leq N$. If $N = U(S \mid N)$, then Corollary \ref{ucor} implies that $S$ is a $\ast$-product over $N$, which is basically  $\Delta$-product over $N$ and $N$.  On the other hand, if $ U(S \mid N)< N$, then by Corollary \ref{ucorr}, we conclude that $U(S \mid N)$  is the non-trivial $S$-normal subgroup $H$ contained in $N$ over which  $S$ is a $\Delta$-product. The converse follows immediately from Corollary \ref{ucorr}.
\end{proof}

    Now, let us  define the descending chain of subgroups $U^i(S \mid N)$,
where $U^i(S \mid N) = U(S \mid U^{i-1}(S \mid N))$ for  $i \geq 2$ and $U^1(S \mid N)=U(S \mid N)$.
\begin{lemma}
Let $S$ be a supercharacter theory of $G$, and let $1<N < G$ be $S$-normal.
Let $U$ be the last term of $U^i(S \mid N)$. Then $U > 1$ if and only if there exists a non-trivial $S$-normal subgroup $H$ of $G$ contained in $N$ for which $S$ is a $\ast$-product over $H$.
\end{lemma}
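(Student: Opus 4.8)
The plan is to show both directions by iterating the single-step correspondence between $U(S\mid N)$ and $\Delta$-products established in Corollary \ref{ucorr}, and then recognizing that the chain $U^i(S\mid N)$ stabilizes precisely at an $S$-normal subgroup over which $S$ is a $\ast$-product. First I would observe that the chain $U^1(S\mid N) \geq U^2(S\mid N) \geq \cdots$ is descending: since $U^1(S\mid N) \leq N$ by Lemma \ref{uorder}, applying the monotonicity Lemma \ref{uorder} (the one asserting $H \leq N \Rightarrow U(S\mid H) \leq U(S\mid N)$) repeatedly gives $U^{i+1}(S\mid N) = U(S\mid U^i(S\mid N)) \leq U(S\mid U^{i-1}(S\mid N)) = U^i(S\mid N)$. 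As $G$ is finite, the chain terminates: there is an index $i$ with $U := U^i(S\mid N) = U^{i+1}(S\mid N) = U(S\mid U)$.

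For the forward direction, suppose $U > 1$. Since $U = U(S\mid U)$, Corollary \ref{ucor} tells us that $S$ is a $\ast$-product over $U$. Moreover $U = U^i(S\mid N) \leq U^{i-1}(S\mid N) \leq \cdots \leq U^1(S\mid N) \leq N$ by the descending property and Lemma \ref{uorder}, so $U$ is a non-trivial $S$-normal subgroup of $G$ contained in $N$ over which $S$ is a $\ast$-product. This gives the required $H = U$.

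For the converse, suppose there is a non-trivial $S$-normal subgroup $H \leq N$ over which $S$ is a $\ast$-product, i.e. $H = U(S\mid H)$ by Corollary \ref{ucor}. The goal is to show $H \leq U^i(S\mid N)$ for every $i$, which forces the last term $U$ to contain $H$ and hence $U > 1$. I would do this by induction on $i$. For $i = 1$: since $H \leq N$ and $S$ is a $\ast$-product over $H$, Corollary \ref{ucorr} (the $\Delta$-product characterization, applied with the pair $H, N$ — a $\ast$-product over $H$ is in particular a $\Delta$-product over $H$ and $N$ whenever $H \leq N$) gives $H \leq U(S\mid N) = U^1(S\mid N)$. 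For the inductive step, assume $H \leq U^{i-1}(S\mid N)$; then $H \leq U^{i-1}(S\mid N)$ and $S$ is a $\ast$-product over $H$, so again by Corollary \ref{ucorr} we get $H \leq U(S\mid U^{i-1}(S\mid N)) = U^i(S\mid N)$. Hence $H \leq U$, so $U \geq H > 1$.

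The main obstacle is the subtle point in the converse: one must be sure that "$S$ is a $\ast$-product over $H$" can be upgraded to "$S$ is a $\Delta$-product over $H$ and $N$" whenever $H \leq N$, so that Corollary \ref{ucorr} applies and yields $H \leq U(S\mid N)$; this is exactly the statement that a $\ast$-product over $H$ is a $\Delta$-product over $H$ and $H$, combined with the fact that being a $\Delta$-product over $H$ and $H$ implies being a $\Delta$-product over $H$ and any $S$-normal $N$ with $H \leq N$ — equivalently, via Theorem \ref{burvanish}, $V(S\mid H) = H \leq N$. Once this is in place, the induction is routine. A secondary point worth a sentence is that the two lemmas both labelled \ref{uorder} in the excerpt must be disambiguated; I would cite the monotonicity statement for the descending-chain claim and the $U(S\mid N) \leq N \cap [G,S]$ statement only where containment in $N$ is needed.
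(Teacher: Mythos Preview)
Your proposal is correct and follows essentially the same approach as the paper: both directions hinge on recognizing that the terminal term $U$ satisfies $U = U(S\mid U)$, which by Corollary \ref{ucor} is equivalent to $S$ being a $\ast$-product over $U$, and the converse is handled by showing inductively that $H \leq U^i(S\mid N)$ for all $i$. The only cosmetic difference is that in the inductive step the paper invokes the monotonicity lemma directly (from $U(S\mid H) = H$ and $H \leq U^{i-1}(S\mid N)$ one gets $H = U(S\mid H) \leq U(S\mid U^{i-1}(S\mid N))$), whereas you route through the $\Delta$-product characterization of Corollary \ref{ucorr}; these are equivalent via Lemma \ref{ugroup} and Theorem \ref{burvanish}, so your worry about the ``main obstacle'' is already resolved by $V(S\mid H) = H \leq N$.
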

\begin{proof}
First, suppose that $U > 1$. Then there is some index $n$ where 
$U^n(S \mid N) = U^{n+1}(S \mid N) = U(S \mid U^n(S \mid N)).$
Then,  Lemma \ref{ugroup}, \ref{nvsn} and Theorem \ref{burvanishh} prove that $S$ is a $\ast$-product over $U$, and clearly $U \leq N$.
\par For the converse part, from Corollary \ref{ucor} we note  that $U(S \mid H) = H$, where $H$ is a non-trivial $S$-normal subgroup  of $G$ contained in $N$ for which $S$ is a $\ast$-product over $H$. Therefore, Lemma \ref{uorder}  yields
$H\leq U^i(S \mid N) $ for all $i$. Since  $U$ is the last term of $U^i(S \mid N)$ and $H$ is a non-trivial subgroup  of $G$ , we have $1 < H \leq U$.
\end{proof}   
	\begin{lemma}
		Let $H$ and $N$ be $S$-normal subgroups of $G$ that satisfy $V(S \mid N) \leq H$.
		Then
		\[
		U(S^{G/N} \mid H/N) \;=\; U(S \mid H)/N.
		\]
	\end{lemma}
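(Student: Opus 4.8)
The plan is to prove the equality by double inclusion, translating between the supercharacter theory $S$ on $G$ and the deflated theory $S^{G/N}$ on $G/N$, and exploiting the hypothesis $V(S\mid N)\le H$ to guarantee that the relevant subgroups all contain $N$, so that passing to the quotient is well-defined. First I would record the basic dictionary: for an $S$-normal subgroup $K$ of $G$ with $N\le K$, Lemma \ref{lemma1} gives that $K/N$ is $S^{G/N}$-normal, and conversely every $S^{G/N}$-normal subgroup of $G/N$ has this form; moreover the supercharacters of $S^{G/N}$ are exactly the $\chi\in\mathrm{Irr}(S)$ with $N\le\ker(\chi)$, so $\mathrm{Irr}(S^{G/N}\mid K/N)$ corresponds to $\mathrm{Irr}(S/N)\cap\mathrm{Irr}(S\mid K)$. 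The crucial computational input is a comparison of vanishing-off subgroups in $G$ and in $G/N$: I claim that whenever $N\le K$ and $V(S\mid N)\le K$, one has $V(S^{G/N}\mid K/N)=V(S\mid K)/N$. Indeed $V(S\mid K)=V(S\mid N)\cdot V(S\mid K)$ (since $N\le K$ forces $\mathrm{Irr}(S\mid N)\subseteq\mathrm{Irr}(S\mid K)$ by Lemma \ref{irr}, so $V(S\mid N)\le V(S\mid K)$ by the product formula in Theorem stated after Lemma \ref{nvsn}); decomposing $\mathrm{Irr}(S\mid K)$ into those characters killing $N$ and those not, the latter all vanish off $V(S\mid N)\le K$, hence off $K$ certainly, and the former descend to $\mathrm{Irr}(S^{G/N}\mid K/N)$ whose joint vanishing-off subgroup is $V(S^{G/N}\mid K/N)$. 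Pulling back via $\pi$ gives the claimed identity.

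With this comparison in hand, the two inclusions follow mechanically. For $\supseteq$: write $U(S\mid H)=\prod_{K\in\mathcal H}K$ with $\mathcal H=\{K\triangle_S G: V(S\mid K)\le H\}$. Since $V(S\mid N)\le H$, we have $N\le V(S\mid N)\le H$, so by Lemma \ref{nvsn} every $K\in\mathcal H$ satisfies $V(S\mid K)\le H$, and I must check that $N\le K$ for each such $K$ — this holds because $N\le V(S\mid K)\le H$ is not quite enough on its own, so instead I replace $\mathcal H$ by the cofinal subfamily of those $K$ already containing $N$ (note $U(S\mid H)$ is unchanged since $H\in\mathcal H$... ) — more carefully, it suffices to show $U(S\mid H)$ is generated by the members of $\mathcal H$ that contain $N$, which follows once we know $N\le U(S\mid H)$; and $N\le U(S\mid H)$ is exactly the statement $V(S\mid N)\le H$ via Lemma \ref{ugroup}. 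So every relevant $K$ contains $N$, and by the comparison identity $V(S^{G/N}\mid K/N)=V(S\mid K)/N\le H/N$, whence $K/N\le U(S^{G/N}\mid H/N)$; taking the product, $U(S\mid H)/N\le U(S^{G/N}\mid H/N)$.

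For $\subseteq$: let $\bar K\triangle_{S^{G/N}}(G/N)$ with $V(S^{G/N}\mid\bar K)\le H/N$; write $\bar K=K/N$ with $N\le K\triangle_S G$ by Lemma \ref{lemma1}. Then $V(S\mid K)\supseteq V(S\mid N)$ and the comparison identity gives $V(S\mid K)/N=V(S^{G/N}\mid K/N)\le H/N$, hence $V(S\mid K)\le H$, so $K\le U(S\mid H)$ and $\bar K=K/N\le U(S\mid H)/N$. Running over all such $\bar K$ yields $U(S^{G/N}\mid H/N)\le U(S\mid H)/N$. Combining the two inclusions finishes the proof.

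The main obstacle is the comparison identity $V(S^{G/N}\mid K/N)=V(S\mid K)/N$ under the hypothesis $V(S\mid N)\le K$: one has to be careful that characters in $\mathrm{Irr}(S\mid K)$ which do \emph{not} kill $N$ are genuinely irrelevant to $V(S\mid K)$ beyond what $N$ already contributes, and this is precisely where $V(S\mid N)\le K$ (equivalently $N\le U(S\mid K)$, or $V(S\mid N)\le H$ in the application) is used — it ensures those characters vanish on $G\setminus K$, so they do not enlarge the vanishing-off subgroup past $K$, and their ``new'' contribution is absorbed into $V(S\mid N)\le K$. Everything else is bookkeeping with Lemmas \ref{lemma1}, \ref{irr}, \ref{nvsn}, \ref{ugroup} and the product formula for $V(S\mid -)$.
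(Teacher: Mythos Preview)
Your overall strategy matches the paper's: both arguments reduce to showing that, for $S$-normal $K\ge N$, one has $V(S\mid K)\le H$ if and only if $V(S^{G/N}\mid K/N)\le H/N$, and both establish this via the relation between $V(S\mid K)$ and $V(S^{G/N}\mid K/N)$. The paper simply cites this relation as \cite[Lemma~4.5]{Burkett2020Vanishing-offProducts}, namely
\[
V(S^{G/N}\mid K/N)\cdot \bigl(V(S\mid N)/N\bigr)\;=\;V(S\mid K)/N,
\]
whereas you re-derive it by splitting $\mathrm{Irr}(S\mid K)$ according to whether $N$ lies in the kernel. Your handling of the passage from $\mathcal H$ to the subfamily of $K$'s containing $N$ (via $N\le U(S\mid H)$ and then $K\mapsto KN$) is also exactly the paper's $\mathcal{C}\leftrightarrow\mathcal{C}^\ast$ step.

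There is, however, a genuine gap in how you invoke your ``comparison identity.'' You state it as the \emph{equality} $V(S^{G/N}\mid K/N)=V(S\mid K)/N$ under the hypothesis $V(S\mid N)\le K$, and your justification does use that hypothesis (it is what lets you absorb the contribution of $\mathrm{Irr}(S\mid N)$ into $K$). But in the application you never verify $V(S\mid N)\le K$; you only have $V(S\mid N)\le H$, and your parenthetical ``equivalently \ldots\ $V(S\mid N)\le H$ in the application'' conflates two different conditions. In fact the equality can fail for $K\in\mathcal H$ with $N\le K$: the product formula only gives $V(S^{G/N}\mid K/N)\le V(S\mid K)/N$, and the reverse inclusion needs $V(S\mid N)/N\le V(S^{G/N}\mid K/N)$, which you have no reason to expect. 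The fix is immediate and is precisely what the paper does: abandon the equality and use the product formula directly together with $V(S\mid N)\le H$. For $\supseteq$ you only need $V(S^{G/N}\mid K/N)\le V(S\mid K)/N\le H/N$; for $\subseteq$ you need $V(S\mid K)/N = V(S^{G/N}\mid K/N)\cdot\bigl(V(S\mid N)/N\bigr)\le (H/N)(H/N)=H/N$.
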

	
	\begin{proof}
		Let us define the sets
        \begin{align*}
            	\mathcal{C} =& \{ K \triangle_S G \mid V(S \mid K) \leq H \},\\
                \mathcal{D} = &\{ K/N \triangle_{S^{G/N}} G/N \mid V(S^{G/N} \mid K/N) \leq H/N \},\\
                \mathcal{C}^\ast =&\{ K \triangle_S G \mid N \leq K \text{ and } V(S \mid K) \leq H \}.
        \end{align*}
		We claim that  $\mathcal{D}$ can be identified with $\mathcal{C^\ast}$. 
		Let $K \triangle_S G$ satisfy $N \leq K$. Then, 
		\cite[Lemma 4.5]{Burkett2020Vanishing-offProducts} yields
        \begin{align*}
            V(S^{G/N} \mid K/N)V(S \mid N)/N = V(S \mid K)/N.
        \end{align*}
		Using this and the given condition that $V(S \mid N) \leq H$, it follows that $K/N \in \mathcal{D}$ if and only if $K \in \mathcal{C}^\ast$, as claimed.  In particular, this gives
		\begin{align}\label{quo}
			U(S^{G/N} \mid H/N)= \prod_{K/N \in \mathcal{D}} K/N= \prod_{K \in \mathcal{C}^\ast} K/N=\left(\prod_{K\in \mathcal{C}^\ast}K\right)/N.
		\end{align}
				
		Next, we observe that since $V(S \mid N) \leq H$, we have $K \in \mathcal{C}$ if and only if $KN \in \mathcal{C}^\ast$ by \cite[Lemma 4.4]{Burkett2020Vanishing-offProducts}.  
		Hence
		\begin{align}\label{quoo}
			U(S \mid H) = \prod_{K \in \mathcal{C}} K
			= \prod_{K \in \mathcal{C}} KN
			= \prod_{K \in \mathcal{C}^\ast} K.
		\end{align}
		
		The result now follows from \eqref{quo} and \eqref{quoo}.
	\end{proof}	
	\begin{lemma}
		Let $N \lhd_S G$. Then 
		\[
		U(S \mid N) \;=\; \bigcap_{\chi \in \mathcal{U}} \ker(\chi),
		\]
		where 
		\[
		\mathcal{U} \;=\; \{\chi \in Irr(S) \mid V(\chi) \nleq N \}.
		\]
	\end{lemma}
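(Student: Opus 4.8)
The plan is to characterize $U(S\mid N)$ via a double inclusion with the intersection of kernels $\bigcap_{\chi\in\mathcal{U}}\ker(\chi)$, where $\mathcal{U}=\{\chi\in\mathrm{Irr}(S)\mid V(\chi)\nleq N\}$. The cleanest route is to use Theorem~\ref{ugroupp}(2), which says precisely that $g\in U(S\mid N)$ if and only if every $\chi\in\mathrm{Irr}(S)$ with $g\notin\ker(\chi)$ vanishes on $G\setminus N$. So I want to rephrase that element-wise condition in terms of the set $\mathcal{U}$.

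\textbf{Key steps.} First I would observe the basic dichotomy for a supercharacter $\chi\in\mathrm{Irr}(S)$: either $V(\chi)\leq N$ (equivalently $\chi$ vanishes on $G\setminus N$) or $V(\chi)\nleq N$ (equivalently $\chi\in\mathcal{U}$, i.e.\ $\chi$ is nonzero somewhere outside $N$); recall $V(\chi)=\{g\in G\mid\chi(g)\neq 0\}$ is the vanishing-off subgroup and $\chi$ vanishes on $G\setminus N$ exactly when $V(\chi)\subseteq N$. Then the condition ``every $\chi$ with $g\notin\ker(\chi)$ vanishes on $G\setminus N$'' is logically equivalent to ``no $\chi\in\mathcal{U}$ has $g\notin\ker(\chi)$'', i.e.\ ``$g\in\ker(\chi)$ for every $\chi\in\mathcal{U}$'', i.e.\ $g\in\bigcap_{\chi\in\mathcal{U}}\ker(\chi)$. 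Feeding this equivalence into Theorem~\ref{ugroupp}(2) gives $g\in U(S\mid N)\iff g\in\bigcap_{\chi\in\mathcal{U}}\ker(\chi)$ for every $g\in G$, which is the desired equality of subgroups. One should also note the boundary cases: if $G$ is $S$-abelian the statement still makes sense (every $\chi$ is linear, $V(\chi)=G$ unless $G$ itself equals $N$), but since the chain of cited results, Theorem~\ref{ugroupp} in particular, is stated for non $S$-abelian $G$, I would either restrict to that case or handle the $S$-abelian case directly by noting $\mathrm{Irr}(S/N)$ cuts out $N$ and $\mathcal{U}=\mathrm{Irr}(S)\setminus\mathrm{Irr}(S/N)$ exactly when $N<G$.

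\textbf{Main obstacle.} The only real subtlety is making sure that $V(S\mid N)\le N$ is not being silently assumed: Theorem~\ref{ugroupp}(2) holds for arbitrary $S$-normal $N$, so no such hypothesis is needed, but one must be careful that $\mathcal{U}$ is defined using individual supercharacters $\chi$ rather than the aggregate $V(S\mid N)=\prod_{\chi\in\mathrm{Irr}(S\mid N)}V(\chi)$ — these are genuinely different, and the lemma is asserting that $U(S\mid N)$ ``sees'' each supercharacter individually through its kernel. The verification that this is exactly what Theorem~\ref{ugroupp}(2) encodes is the heart of the argument; once that translation is pinned down, the proof is a short logical rephrasing with no computation.

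\begin{proof}
We may assume $N<G$, since $U(S\mid G)=G=\bigcap_{\chi\in\varnothing}\ker(\chi)$ trivially (no $\chi$ can vanish on the empty set $G\setminus G$ unless forced, but in any case $\mathcal{U}$ then consists of those $\chi$ with $V(\chi)\nleq G$, which is empty). Fix $g\in G$. For each $\chi\in\mathrm{Irr}(S)$, the condition that $\chi$ vanishes on $G\setminus N$ is equivalent to $V(\chi)=\{h\in G\mid\chi(h)\neq 0\}\subseteq N$, i.e.\ to $\chi\notin\mathcal{U}$. Hence the statement
\[
\text{every }\chi\in\mathrm{Irr}(S)\text{ with }g\notin\ker(\chi)\text{ vanishes on }G\setminus N
\]
is equivalent to the statement that no $\chi\in\mathcal{U}$ satisfies $g\notin\ker(\chi)$, that is, $g\in\ker(\chi)$ for all $\chi\in\mathcal{U}$, i.e.\ $g\in\bigcap_{\chi\in\mathcal{U}}\ker(\chi)$. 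By Theorem~\ref{ugroupp}(2), the left-hand condition is in turn equivalent to $g\in U(S\mid N)$. Since $g\in G$ was arbitrary, we conclude
\[
U(S\mid N)=\bigcap_{\chi\in\mathcal{U}}\ker(\chi),
\]
as claimed.
\end{proof}
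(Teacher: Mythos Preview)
Your proof is correct and follows essentially the same approach as the paper's: both hinge on Theorem~\ref{ugroupp} together with the observation that $\chi$ vanishes on $G\setminus N$ if and only if $V(\chi)\le N$, i.e.\ $\chi\notin\mathcal{U}$. The only cosmetic difference is that the paper argues by double inclusion, using Theorem~\ref{burvanish} and Theorem~\ref{ugroupp}(1) for the two containments $U(S\mid N)\le W$ and $W\le U(S\mid N)$, whereas you invoke the element-wise criterion Theorem~\ref{ugroupp}(2) once and read off both inclusions simultaneously via the contrapositive; your route is slightly more economical but logically equivalent.
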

	
	\begin{proof}
	For convenience, let $U = U(S \mid N)$ and let 
		\[
		W = \bigcap_{\chi \in \mathcal{U}} \ker(\chi).
		\]
		We note that $V(S \mid U) \leq N$, and so by Theorem \ref{burvanish}, we achieve $V(\chi) \leq N$ for all characters 		$\chi \in Irr(S \mid U)$. This implies that $U \leq \ker(\chi)$ for every 	character $\chi \in \mathcal{U}$. Hence $U \leq W$.  
		\par Conversely, consider a character $\chi \in Irr(S \mid W)$. Since 
		$W \nleq \ker(\chi)$, it follows that $\chi\notin \mathcal{U}$, that is, $V(\chi) \leq N$. In particular, we showed that
		$\chi$ vanishes on $G \setminus N$ for all $\chi \in Irr(S \mid W)$. By Lemma \ref{ugroupp}, we have $W \leq U$. This completes the proof.
	\end{proof}
	\begin{definition}
		We define
		\[
		U(S) \;=\; U\bigl(S \mid Z(S)\bigr).
		\]
	\end{definition}
	When $G$ is $S$-abelian, $U(S)=G$. So, we assume $Z(S)\neq G$ while studying $U(S)$. 
    \begin{lemma}
		Let $G$ be a non $S$-abelian group. Then
		\[
		U(S) \;\leq\; [G,S] \cap Z(S) \;\leq\; [G,S]Z(S) \;\leq\; V(S).
		\]
	\end{lemma}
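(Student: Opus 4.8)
The chain has four pieces, and the plan is to establish them from right to left, reusing the two-term inequalities already available for $V(S)$. The rightmost inequality $[G,S]Z(S)\le V(S)$ follows immediately from Lemma \ref{vs}: part (2) gives $[G,S]\le V(S)$ and part (3) gives $Z(S)\le V(S)$ (using that $G$ is non $S$-abelian), so their product lies in $V(S)$ since $V(S)$ is a subgroup. The middle inequality $[G,S]\cap Z(S)\le [G,S]Z(S)$ is trivial, as any subgroup of $G$ is contained in any product of subgroups that includes it as one factor. So the only real content is the leftmost inequality $U(S)\le [G,S]\cap Z(S)$.

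For that, I would apply Lemma \ref{uorder} (the second one, with the statement $U(S\mid N)\le N\cap[G,S]$ for non $S$-abelian $G$) directly with $N=Z(S)$. By definition $U(S)=U(S\mid Z(S))$, so Lemma \ref{uorder} immediately yields $U(S)\le Z(S)\cap[G,S]$, which is exactly the left end of the chain. Stringing the three facts together gives
\[
U(S)\;\le\;[G,S]\cap Z(S)\;\le\;[G,S]Z(S)\;\le\;V(S),
\]
as desired.

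The main thing to watch is the hypothesis bookkeeping rather than any genuine obstacle: Lemma \ref{vs}(3) and the second Lemma \ref{uorder} both require $G$ to be non $S$-abelian (equivalently $Z(S)\ne G$), which is precisely the standing assumption of the statement, so there is nothing to check beyond invoking it. One should also make sure $U(S)$ is genuinely defined here, i.e. that $Z(S)$ is $S$-normal so that $U(S\mid Z(S))$ makes sense; this is standard (the center $Z(S)$ is a union of singleton $S$-classes, hence $S$-normal), and in any case it is implicit in the definition of $U(S)$ given just above the statement. So the proof is a two-line assembly of Lemma \ref{uorder} and Lemma \ref{vs}, with the trivial set-theoretic containment $[G,S]\cap Z(S)\le[G,S]Z(S)$ in the middle.
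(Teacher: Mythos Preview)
Your proof is correct and matches the paper's own argument essentially line for line: the paper invokes Lemma \ref{uorder} (the version $U(S\mid N)\le N\cap[G,S]$) for the left inequality and Lemma \ref{vs}(2,3) for the right one, leaving the middle containment implicit as you do. Your added remarks on the non-$S$-abelian hypothesis and the $S$-normality of $Z(S)$ are accurate and simply make explicit what the paper takes for granted.
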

	
	\begin{proof}
		We have $U(S) \leq [G,S] \cap Z(S)$  by Lemma\ref{uorder} and $[G,S]Z(S) \leq V(S)$ follows from Lemma \ref{vs}(2, 3).
	\end{proof}	
    We conclude our paper with a characterization of $VZ(S)$-groups in terms of $U(S)$.
	\begin{theorem}
	    Let $G$ be a non $S$-abelian group. The following are equivalent:
		\begin{enumerate}
			\item $G$ is a $VZ(S)$-group.
			\item $Z(S) = V(S)$.
			\item $U(S) = [G,S]$.
		\end{enumerate}
	\end{theorem}
	\begin{proof}
	Employing Lemma \ref{zs} and \ref{vs}(3), we have (1) and (2) are equivalent . Now, let us assume (1). Using Lemma \ref{uorder}, we have $U(S) \leq [G,S]$, while the reverse containment follows from Lemma \ref{ugroup}, since in a $VZ(S)$ group, $V(S)\leq Z(S)$. Now, let (3) be true. Then Lemma \ref{ugroup} implies $Z(S)\geq V(S)$. Therefore, $G$ is a $VZ(S)$-group, completing the proof. 
	\end{proof}
 \section{Acknowledgements}
 The author would like to thank Professor Mark Lewis for his useful suggestions in improving the quality of the paper.

    \bibliography{references.bib}
	\bibliographystyle{ieeetr}

\end{document}